\RequirePackage[l2tabu, orthodox]{nag}
\documentclass[a4paper,12pt]{amsart}
\usepackage[all, warning]{onlyamsmath}

\usepackage[
  nomarginpar,
  margin=1in,
]{geometry}
\usepackage{mainstyle}

\begin{document}

\title{Global \texorpdfstring{$\bmplus$}{+}-regularity of regular del Pezzo surfaces in mixed characteristic}
\author{Hirotaka Onuki}

\begin{abstract}
  Let $R = W(k)$ be the ring of Witt vectors over an algebraically closed field $k$ of characteristic $p > 2$.
  Let $M$ be a three-dimensional regular integral flat projective $R$-scheme such that $H^0(M,\mathcal{O}_M) = R$ and the anticanonical sheaf $\omega_M^{-1}$ is ample.
  We show that $M$ is globally $+$-regular if the closed fiber $M_k$ is reduced.
\end{abstract}

\maketitle

\tableofcontents

\section{Introduction}
Throughout this introduction, by \emph{surfaces} (resp.\ \emph{curves}) \emph{in mixed characteristic} we mean schemes of \emph{relative dimension} two (resp.\ one) over a complete DVR of mixed characteristic with algebraically closed residue field.

Birational geometry in mixed characteristic has seen substantial progress in recent years.
For example, the minimal model program for surfaces in mixed characteristic $(0, p > 5)$ has been established by~\cite{B+MMP,TakamatsuYoshikawa23}.

One of the main difficulties in this subject is the failure of Kawamata--Viehweg-type vanishing theorems in mixed characteristic~\cite{Totaro24:Terminal3foldsCM}, as well as in positive characteristic~\cite{Raynaud78:Contre}.
In positive characteristic, a key to dealing with this difficulty is the notion of globally $F$-regular varieties~\cite{Smith00:gFreg}, which form an important subclass of globally $F$-split varieties. Recall that a variety in positive characteristic is globally $F$-split if the absolute Frobenius morphism $\sO_X \to F\push \sO_X$ splits globally. It is known from~\cite[Theorem 1.1]{SchwedeSmith10} that globally $F$-regular varieties are of Fano type.
The theory of global $F$-regularity plays a crucial role in the minimal model program in positive characteristic (see, for example,~\cite{HaconXu15}).

Takamatsu and Yoshikawa~\cite{TakamatsuYoshikawa23} and Bhatt et al.~\cite{B+MMP} introduced the notion of \emph{global $\bmplus$-regularity} as a mixed characteristic analog of global $F$-regularity (see \myref{def:gpr} for its definition).
Bhatt's vanishing theorem~\cite{Bhatt21v2:CM} implies that Kawamata--Viehweg-type vanishing holds for globally $\bmplus$-regular schemes~\cite[Corollary 6.12]{B+MMP}.
Globally $\bmplus$-regular schemes are conjectured to be of Fano type, in analogy with the case of globally $F$-regular varieties~\cite[Conjecture 6.17]{B+MMP}.

While some examples of globally $\bmplus$-regular schemes are already implicit in the literature (see, for example, \cite[Corollary 7.5]{B+MMP} and \cite[Theorem 6.4]{Yoshikawa25}), they have not been systematically investigated.

In this thesis, we study the global $\bmplus$-regularity of low-dimensional Fano schemes in mixed characteristic.
Regular Fano curves in mixed characteristic are either $\PP^1$ or the blowup of $\PP^1$ at a closed point, and they are globally $\bmplus$-regular (see \myref{prop:classification-of-2-dim-Fano}).
Smooth del Pezzo surfaces in mixed characteristic are either $\PP^1 \times \PP^1$ or blowups of $\PP^2$ at $r \le 8$ points (see \myref{prop:smooth-wdP}).
Very recently, Yoshikawa essentially proved that they are globally $\bmplus$-regular if the base ring is the ring of Witt vectors~\cite[Theorem 6.4]{Yoshikawa25}.
We therefore turn to the study of regular del Pezzo surfaces whose closed fiber may be singular.
Our main result is the following:
\begin{maintheorem}\label{main:globally-plus-regular}
  Let $R = \W(k)$ be the ring of Witt vectors of an algebraically closed field $k$ of characteristic $p > 0$.
  Let $M$ be a regular integral flat projective $R$-scheme with $\Hh^0(M, \sO_M) = R$.
  Suppose that $\dim M = 3$ and the anticanonical sheaf $\omega_M^\inv$ is ample.
  If $p > 2$ and the closed fiber $M_k$ is reduced, then $M$ is globally $\bmplus$-regular.
\end{maintheorem}
As a consequence of \myref{main:globally-plus-regular}, we deduce a Kawamata--Viehweg-type vanishing theorem for these surfaces from~\cite[Corollary 6.12]{B+MMP}.
\begin{maintheorem}\label{main:vanishing}
  Let $M$ be as in \myref{main:globally-plus-regular}.
  Let $\sA$ be a big and semiample line bundle on $M$.
  Then $\Hh^i(M, \omega_M \otimes \sA) = 0$ holds for all $i > 0$.
\end{maintheorem}
It is reasonable to expect that the reducedness of $M_k$ in \myref{main:globally-plus-regular} can be derived from the other assumptions.
One may also ask whether the assumption $p > 2$ is essential.

\subsection{Idea of the proof of \myref{main:globally-plus-regular}}
Our proof of \myref{main:globally-plus-regular} is based on the theory of \emph{quasi-$F$-splitting} in mixed characteristic, together with the classification of the closed fiber $M_k$.

Yoshikawa~\cite{Yoshikawa25} introduced the notion of quasi-$F$-splitting in mixed characteristic, based on Yobuko's theory in positive characteristic~\cite{Yobuko19} (see \myref{def:mix-quasi-F-split} for its definition).
He essentially showed that quasi-$F$-splitting implies global $\bmplus$-regularity in a certain setting~\cite[Theorem 6.3]{Yoshikawa25}, and we use a generalization of this result (\myref{thm:qFs-implies-gpr}).

We split the proof into cases depending on the singularities of the closed fiber $M_k$.

If the closed fiber $M_k$ has only rational double points (RDP), then $M$ is quasi-$F$-split by~\cite[Theorem~B]{OnukiTakamatsuYoshikawa25v2}.

If the closed fiber $M_k$ is normal but not an RDP del Pezzo surface, then $M_k$ is the projective cone over an elliptic curve by~\cite{HidakaWatanabe81}.
Such a variety is easily seen to be quasi-$F$-split, and hence $M$ is quasi-$F$-split.

If the closed fiber $M_k$ is nonnormal, we give a classification of $M_k$ (\myref{thm:irreducible-main} and \ref{thm:reducible-main}), inspired by~\cite{Fujita90}.
Fujita~\cite{Fujita90} (cf.~\cite{Fukuoka20:dP6}) investigated singular fibers of del Pezzo fibrations over curves in characteristic zero, and our classification result is analogous to his.
His classification is based on Mori's classification of three-dimensional contractions~\cite{Mori82}, which is not available in mixed characteristic, and we instead use the classification of nonnormal Gorenstein del Pezzo surfaces by Reid~\cite{Reid94}.
Using this classification of $M_k$, together with the criterion for $F$-splitting by~\cite{MillerSchwede12}, we conclude that $M_k$ is globally $F$-split. In particular, $M_k$ is quasi-$F$-split.

\subsection{Outline of the paper}
\myref{sec:Preliminaries} presents some preliminaries.
\myref{sec:qFs-gpr} provides a sufficient condition for global $\bmplus$-regularity.
In \myref{sec:curves-and-smooths}, we discuss classification results for some low-dimensional Fano schemes in mixed characteristic.
In \myref{sec:qFs-dP-over-fields}, we study the quasi-$F$-splitting of Gorenstein del Pezzo surfaces in equal characteristic.
In \myref{sec:main}, we classify the nonnormal closed fibers of regular del Pezzo surfaces in mixed characteristic, showing \myref{main:globally-plus-regular}.
\myref{sec:qFs-gpr} is based on discussions with Shou Yoshikawa.

\subsection*{Acknowledgements}
I would like to express my deepest gratitude to my supervisor, Shunsuke Takagi, for his constant guidance, insightful advice, and patient support.
I am greatly indebted to Shou Yoshikawa for many helpful discussions, for generously sharing his ideas, and for allowing me to include \myref{thm:qFs-implies-gpr}.
I would like to express my gratitude to Ryotaro Iwane, Teppei Takamatsu and Hiromu Tanaka for many valuable discussions.

\section{Preliminaries}\label{sec:Preliminaries}
First, we summarize notation used in this paper.
\begin{itemize}
  \item All schemes are assumed to be separated.
  \item For a scheme $M$ and a prime number $p$, write $\modp{M}$ for the closed subscheme of $M$ defined by $p \in \sO_M$.
  \item Denote by
  \[
    \sectionRing{M}{\sA} \coloneqq \bigoplus_{i \ge 0} \Hh^0(M, \sA\powotimes{i})
  \]
  the section ring of an ample line bundle $\sA$ on a scheme $M$.
  \item Let $(R, \frakm)$ be a Noetherian local ring and let $M$ be a projective $R$-scheme. Set
  \[
    \Hh^i_\frakm(M, \sF) \coloneqq \Hh^i R\Gamma_\frakm R\Gamma(M, \sF)
  \]
  for $i \ge 0$ and a coherent sheaf $\sF$ on $M$.
  \item Let $(R, \frakm)$ be an excellent local ring with a normalized dualizing complex $\omega_R^\bullet$. For a (separated) morphism $f \colon M \to \Spec R$ of finite type, we set $\omega_M^\bullet \coloneqq f^!\omega_R^\bullet$. We refer to~\citestacks{08XG} for the details (cf.~\cite[Subsection~2.1]{B+MMP}).
  \item A Noetherian scheme $M$ is said to be \emph{\SerreS{2}} if it satisfies Serre's second condition: that is, every point of codimension at least $2$ or $1$ has depth at least $2$ or $1$, respectively.
  \item Let $k$ be a field. For a nonnegative integer $a$, let $\rationalruled{a}$ denote the rational ruled surface $\PP_{\PP^1_k}(\sO_{\PP^1_k} \oplus \sO_{\PP^1_k}(-a))$.
  \item Let $X$ be a scheme of positive characteristic. Write $F \colon X \to X$ for the absolute Frobenius morphism of $X$.
\end{itemize}

\subsection{Global \texorpdfstring{$\bmplus$}{+}-regularity}
\begin{definition}[\cite{B+MMP,TakamatsuYoshikawa23}]\label{def:gpr}
  Let $M$ be an excellent normal integral scheme with a dualizing complex. Assume that every closed point of $M$ has positive residue characteristic.
  Then $M$ is said to be \emph{globally $\bmplus$-regular} if for every finite surjection $g \colon N \to M$ from a normal integral scheme $N$, the morphism $\sO_M \to g\push \sO_N$ splits as a morphism of $\sO_M$-modules.
\end{definition}

\begin{lemma}
  Let $M$ be a normal integral projective scheme over a Noetherian local ring $(R, \frakm_R)$.
  Let $\sA$ be an ample line bundle on $M$, and let $S \coloneqq \sectionRing{M}{\sA}$.
  Set $\frakm_S = \frakm_R S + S_{> 0}$.
  If $S_{\frakm_S}$ is (globally) $\bmplus$-regular, then so is $M$.
\end{lemma}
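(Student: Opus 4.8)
The plan is to reduce global $\bmplus$-regularity of $M$ to that of the local ring $S_{\frakm_S}$ by transporting finite covers of $M$ to finite covers of $\Spec S$ and splittings back down. The key observation is that $\Spec S \setminus \{\frakm_S\}$ is, up to the $\mathbb{G}_m$-action, the total space of the $\sA$-torsor over $M$; more precisely, there is an open immersion $M \hookrightarrow \operatorname{Proj} S$ which is an isomorphism (since $M$ is projective and $\sA$ ample), and the punctured cone $C^\circ \coloneqq \Spec S \setminus V(\frakm_S)$ admits a natural $\mathbb{G}_m$-bundle morphism $\pi \colon C^\circ \to M$ with $\pi_* \sO_{C^\circ} = \bigoplus_{i \in \mathbb{Z}} \sA^{\otimes i}$ (the nonnegative part being $S$ itself, read off degree by degree). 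Since $M$ is normal and integral, so is $S$ (it is the section ring of an ample bundle on a normal projective scheme), hence so is $S_{\frakm_S}$, and the hypothesis that $S_{\frakm_S}$ is $\bmplus$-regular makes sense.

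First I would take an arbitrary finite surjection $g \colon N \to M$ from a normal integral scheme, and I want to produce a splitting of $\sO_M \to g_* \sO_N$. I would pull back $g$ along $\pi$ to get a finite surjection $g^\circ \colon N^\circ \coloneqq N \times_M C^\circ \to C^\circ$; since $C^\circ$ is normal and the construction is étale-locally a base change of a finite cover, $N^\circ$ is normal, and it is integral because $g$ is generically a field extension and the $\mathbb{G}_m$-bundle is geometrically connected. Next I would extend $N^\circ \to C^\circ$ to a finite cover of all of $\Spec S$: take $\widetilde{N}$ to be the normalization of $\Spec S$ in the function field of $N^\circ$. Because $S$ is normal, Noetherian and (being a section ring over an excellent base) excellent, $\widetilde{N} \to \Spec S$ is finite and $\widetilde{N}$ is normal and integral, and it restricts over $C^\circ$ to $N^\circ$ because normalization is compatible with the open immersion and $N^\circ$ is already normal. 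Localizing at $\frakm_S$, the map $\widetilde{N}_{\frakm_S} \to \Spec S_{\frakm_S}$ is a finite surjection from a normal integral scheme, so by hypothesis $S_{\frakm_S} \to (\widetilde{N})_{\frakm_S}$ splits as an $S_{\frakm_S}$-module map.

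Now I would propagate this splitting. Since $\widetilde{N}$ is finite over $S$, the module $\Gamma(\widetilde{N}, \sO_{\widetilde{N}})$ is a finite $S$-module, and a splitting of the localization at $\frakm_S$ is the same as a local generator at $\frakm_S$ of $\operatorname{Hom}_S(\Gamma(\sO_{\widetilde{N}}), S)$ hitting the unit; but both $S$ and $\Gamma(\sO_{\widetilde{N}})$ are graded (the $\mathbb{G}_m$-action extends over the cone point), the Hom-module is graded, and a splitting exists after localizing at the graded maximal ideal $\frakm_S$ if and only if a homogeneous — indeed degree-zero — splitting exists, by the standard graded Nakayama/clearing-denominators argument. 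A degree-zero $S$-linear splitting $\Gamma(\sO_{\widetilde{N}}) \to S$ is exactly a $\mathbb{G}_m$-equivariant splitting over the cone, which descends along $\pi$ to an $\sO_M$-linear splitting of $\sO_M \to (\pi_* \sO_{\widetilde{N}})_0 = \pi^{C}_* \sO_{N^{C}}$ where $N^{C}$ is the $\operatorname{Proj}$ of $\Gamma(\sO_{\widetilde{N}})$; and since $N^{C} \cong N$ (both are the normal projective compactification of $N^\circ$ with the pulled-back polarization), this is the desired splitting of $\sO_M \to g_* \sO_N$.

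The main obstacle I anticipate is the bookkeeping in the last paragraph: showing cleanly that a splitting over $S_{\frakm_S}$ can be taken graded of degree zero, and that the resulting equivariant data on the cone descends to exactly the map $\sO_M \to g_* \sO_N$ one started with rather than to some twist of it. This is where one must be careful that $N^{C} = \operatorname{Proj} \Gamma(\widetilde{N}, \sO_{\widetilde{N}})$ really recovers $N$ with its polarization $g^* \sA$ — which uses that $N \to M$ is finite so that $g^* \sA$ is ample and $\Gamma(\widetilde{N}, \sO_{\widetilde{N}}) = \bigoplus_i \Hh^0(N, g^*\sA^{\otimes i})$ is the corresponding section ring — and that the degree-zero piece of $\operatorname{Hom}_S(-, S)$ is $\operatorname{Hom}_{\sO_M}(g_* \sO_N, \sO_M)$. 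Everything else is a routine application of excellence, normality of section rings, and compatibility of normalization with localization and with $\mathbb{G}_m$-torsors.
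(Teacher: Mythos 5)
Your proposal is correct and is essentially the paper's argument: the ring you construct as the normalization of $\Spec S$ in the function field of the pulled-back punctured cone is exactly the section ring $S' = \sectionRing{N}{g\pull\sA}$ that the paper writes down directly, and your ``graded Nakayama/clearing-denominators'' step is precisely the cited Bruns--Herzog Corollary~3.6.7 used to pass from a splitting of $S_{\frakm_S} \to S'_{\frakm_S}$ to a degree-zero graded splitting of $S \to S'$, which sheafifies to the desired splitting of $\sO_M \to g\push\sO_N$. The $\mathbb{G}_m$-equivariant cone packaging is just a more elaborate route to the same objects.
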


\begin{proof}
  Let $g \colon N \to M$ be a finite surjection from a normal integral scheme $N$.
  Set $S' \coloneqq \sectionRing{N}{g\pull \sA}$.
  Since $S_{\frakm_S}$ is globally $\bmplus$-regular, $S_{\frakm_S} \to (S')_{\frakm_S}$ splits.
  Using~\cite[Corollary~3.6.7]{BrunsHerzog}, we see that $S \to S'$ splits. Hence, $\sO_M = \tilde{S} \to g\push \sO_N = \tilde{S'}$ splits.
\end{proof}



\begin{definition}
  Let $M$ be an excellent integral scheme.
  Let $K$ be the function field of $M$, and let $\overline{K}$ be an algebraic closure of $K$.
  The \emph{absolute integral closure} of $M$ is the integral closure $M^+$ of $M$ in $\overline{K}$.

  Suppose that $M$ is normal.
  Let $D$ be a $\QQ$-Weil divisor on $M$.
  Then we define
  \[
    \sO_{M^+}(D) \coloneqq \colim_{g \colon N \to M} \sO_N(g\pull D)\text,
  \]
  where $g$ runs over all finite surjective morphisms from a normal integral scheme $N$ equipped with an inclusion $K(N) \into K(M^+) = \overline{K}$.
  Note that the pullback $g\pull D$ is defined since $g$ is a finite morphism.
\end{definition}

\subsection{Nonnormal schemes}
Let $X$ be a reduced equidimensional excellent scheme, and let $\nu \colon Y = X\normaln \to X$ be its normalization.
Define $\conductorideal \coloneqq (\colonideal{\sO_X}{\nu\push \sO_Y}) \subset \sO_X$ to be the \emph{conductor ideal} of $X$. It is also an ideal sheaf of the $\sO_X$-algebra $\nu\push \sO_Y$, and it satisfies $\conductorideal = \nu\push \conductorideal\sO_Y$.
Set $C \subset X$ and $\BY \subset Y$ to be the closed subschemes defined by $\conductorideal$ and $\conductorideal\sO_Y$, respectively.
The diagram
\begin{equation*}
  \begin{tikzcd}
    \BY \arrow[dr, phantom, "\lrcorner" very near start, "\ulcorner" very near end]
    \ar[d, "\nu\restrict{\BY}"'] \ar[r, hook] & Y \ar[d, "\nu"] \\
    C \ar[r, hook] & X
  \end{tikzcd}
\end{equation*}
is called the \emph{conductor square} and is both cartesian and cocartesian in the category of schemes. This can be seen from the exact sequence
\begin{equation*}
  0 \to \sO_X \to \nu\push \sO_Y \oplus \sO_C \to \nu\push \sO_\BY \to 0\text,
\end{equation*}
where the left map is the diagonal map, and the right map is the difference map.
If $X$ is \SerreS{2}, then $\BY$ is an effective Weil divisor on $Y$, and $C$ is of pure codimension $1$ in $X$ (see \cite[Proposition 2.2]{Reid94} and \cite[Appendix~A]{FanelliSchroeer20}).

\subsection{Gorenstein del Pezzo surfaces}
We review the classification results for possibly nonnormal Gorenstein del Pezzo surfaces, following Reid~\cite{Reid94} and Hidaka--Watanabe~\cite{HidakaWatanabe81}.
Throughout this subsection, we work over an algebraically closed field $k$ of characteristic $p$.

\begin{definition}
  A connected reduced projective $k$-scheme $X$ is said to be a \emph{Gorenstein del Pezzo surface} if it is of dimension $2$, Gorenstein, and $\omega_X^\inv$ is ample. The intersection number $d \coloneqq (\omega_X^\inv)\powndot{2}$ is called the \emph{degree} of $X$. A Gorenstein del Pezzo surface with only rational double points (RDP) is called an \emph{RDP del Pezzo surface}.
\end{definition}

\begin{theorem}[{\cite[Theorem 2.2]{HidakaWatanabe81}}]\label{thm:normal-dP-classification}
  Let $X$ be a normal Gorenstein del Pezzo surface. Then one of the following holds:
  \begin{enumerate}
    \item $X$ is an RDP del Pezzo surface.
    \item For some elliptic curve $E$ and some ample line bundle $\sA$ on $E$, $X$ is isomorphic to $\Proj \sectionRing{E}{\sA}[t]$, where $t$ is an indeterminate of degree $1$.
  \end{enumerate}
\end{theorem}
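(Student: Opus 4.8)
The plan is to pass to the minimal resolution $\pi \colon Y \to X$ and to split into two cases according to whether the singularities of $X$ are rational. Since $X$ is Gorenstein, $K_X$ is Cartier, so I would write $K_Y = \pi\pull K_X + \Delta$ with $\Delta$ an \emph{integral} divisor supported on the exceptional locus of $\pi$, all of whose coefficients are $\le 0$ by minimality; putting $\Gamma \coloneqq -\Delta \ge 0$, we have $-K_Y = \pi\pull(-K_X) + \Gamma$. As $K_X$ is anti-ample, $\Hh^0(X, mK_X) = 0$ for $m \ge 1$, hence $\Hh^0(Y, mK_Y) \subseteq \Hh^0(Y, \pi\pull(mK_X)) = \Hh^0(X, mK_X) = 0$; thus $\kappa(Y) = -\infty$, and by the Enriques--Kodaira-type classification of surfaces (Bombieri--Mumford in positive characteristic) $Y$ is either rational, or birationally ruled over a smooth projective curve $B$ with $g \coloneqq g(B) \ge 1$.

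If $Y$ is rational I would show that $X$ has only rational double points. Here $\Hh^1(Y, \sO_Y) = 0$, and Serre duality on the Gorenstein surface $X$ gives $\Hh^2(X, \sO_X) \cong \Hh^0(X, \omega_X)^\vee = 0$ since $\omega_X$ is anti-ample; the five-term exact sequence of the Leray spectral sequence for $\pi$ (with $\pi\push\sO_Y = \sO_X$) then forces $\Hh^0(X, R^1\pi\push\sO_Y) = 0$, and as $R^1\pi\push\sO_Y$ is supported at finitely many points it vanishes. So $X$ has rational singularities, which being Gorenstein are rational double points (Artin), and we are in case (1).

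If $Y$ is ruled over $B$ with $g \ge 1$ I would show that $X$ is a projective cone over an elliptic curve. The class $\pi\pull(-K_X)$ is nef and big (it is the pullback of an ample class, with $(-K_X)^2 > 0$), hence so are $-K_Y = \pi\pull(-K_X) + \Gamma$ and, on a relatively minimal model $p \colon Y_0 = \PP_B(\mathcal{E}) \to B$, the class $-K_{Y_0}$. Moreover $X$ has a non-rational singularity: otherwise $\pi$ would be crepant, so $-K_Y = \pi\pull(-K_X)$ would be nef and big, forcing $Y$ to be rational (it would be a weak del Pezzo surface), contrary to assumption. Now every rational curve on $Y$ lies in a fiber of $Y \to B$, so the exceptional divisor over the non-rational singularity contains a non-rational component, necessarily a multisection; and by adjunction any non-rational exceptional curve $E_i$ satisfies $K_Y \cdot E_i \ge -E_i^2 > 0$, hence has discrepancy $a_i \le -1$. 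For a general fiber $F$, which is not $\pi$-contracted, $0 < \pi\pull(-K_X)\cdot F = 2 - \Gamma\cdot F$, so $\Gamma\cdot F \le 1$; since each non-rational exceptional curve contributes at least $1$ to $\Gamma\cdot F$, there is exactly one such curve $\Theta$, with discrepancy $-1$ and $\Theta\cdot F = 1$. Thus $\Theta$ is a section of $Y \to B$, so $\Theta \cong B$; and substituting the discrepancy $-1$ into adjunction on $Y$ gives $2g - 2 \le 0$, whence $g = 1$ and $B = E$ is an elliptic curve. A further argument — combining the minimality of $\pi$ (any $(-1)$-curve $\varepsilon$ with $\Theta\cdot\varepsilon = 1$ would satisfy $\pi\pull(-K_X)\cdot\varepsilon = 0$, forcing $\varepsilon$ to be $\pi$-exceptional, which is impossible) with bigness of $-K_{Y_0}$ and the classification of rank-$2$ bundles on elliptic curves — shows $Y = Y_0 = \PP_E(\sO_E \oplus \mathcal{L})$ with $\deg\mathcal{L} < 0$ and $\Theta$ its negative section. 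Contracting $\Theta$ then identifies $X$ with the projective cone over $E$ embedded by $\sA \coloneqq \mathcal{L}^\inv$, i.e.\ $X \cong \Proj\sectionRing{E}{\sA}[t]$, and we are in case (2).

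I expect the main obstacle to be the last step of the ruled case. Turning the numerical observations above into a complete proof that $Y$ is already relatively minimal, and that contracting $\Theta$ yields precisely a projective cone, calls for a somewhat delicate analysis of intersection numbers on the ruled surface $Y$ — one must exclude reducible fibers, extra blow-ups, and rational tails attached to $\Theta$. One must also take care that the classification of surfaces, Artin's characterization of rational Gorenstein surface singularities as rational double points, and the classification of rank-$2$ bundles on elliptic curves are applied in positive characteristic rather than over a field of characteristic zero.
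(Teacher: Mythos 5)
Your route is the same as the paper's: the paper's proof of \myref{thm:normal-dP-classification} consists of citing Hidaka--Watanabe's Theorem 2.2 together with its proof (minimal resolution, Kodaira dimension $-\infty$, rational versus irrationally ruled dichotomy) and their Proposition 4.2 for the identification of the cone with $\Proj\sectionRing{E}{\sA}[t]$, and your sketch is a reconstruction of exactly that argument. The rational branch of your argument is complete and correct in all characteristics (Artin's results on rational singularities and on Gorenstein rational surface singularities being RDPs do not require characteristic zero).

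In the ruled branch there is one step you present as automatic that is false in general and is the real pivot of the argument: from ``$X$ has a non-rational singularity'' you conclude that ``the exceptional divisor over it contains a non-rational component.'' A normal surface singularity can be non-rational even though every exceptional curve of its minimal resolution is a smooth rational curve (cusp singularities and many minimally elliptic singularities are of this type), so this inference needs a proof. It is true in your situation, but only because of the fibration: if all exceptional curves over a given point were rational they would lie in a single fiber of $Y\to B$, and a connected negative-definite configuration of components of a fiber of a genus-zero fibration contracts to a rational singularity (e.g.\ because the contraction is sandwiched between smooth surfaces, or by a direct fundamental-cycle computation). Only after this lemma do you obtain the non-rational multisection $\Theta$, and the rest of your numerical analysis (a unique $\Theta$ with discrepancy $-1$, $\Theta$ a section, $g(B)=1$) goes through. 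A small additional slip: $-\Kk_Y=\pi\pull(-\Kk_X)+\Gamma$ is big but \emph{not} nef when $\Gamma\neq 0$ (it is negative on the exceptional curves), though you only use bigness downstream. The remaining points you flag yourself---relative minimality of $Y$, exclusion of reducible fibers and rational tails, and the identification of the contraction of the negative section with $\Proj\sectionRing{E}{\sA}[t]$---are genuine but standard, and are precisely what the cited parts of Hidaka--Watanabe supply.
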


\begin{proof}
  Let $\widetilde{X}$ be the minimal resolution of $X$.
  By \cite[Theorem 2.2]{HidakaWatanabe81} and its proof, either (1) holds, or else $\widetilde{X} \isom \PP_E(\sO_E \oplus \sA)$ for some elliptic curve $E$ and some ample line bundle $\sA$ on $E$. In the latter case, we have $\sectionRing{X}{\omega_X^\inv} \isom \sectionRing{E}{\sA}[t]$ as shown in the proof of~\cite[Proposition 4.2]{HidakaWatanabe81}.
\end{proof}

Next, we move on to the nonnormal case.

\begin{notation}\label{nota:nonnormal-dP}
  Suppose that $X$ is a nonnormal Gorenstein del Pezzo surface.
  Let $\nu \colon Y \to X$ be its normalization, and let
  \begin{equation*}
    \begin{tikzcd}
      \BY 
      \ar[d, "\nu\restrict{\BY}"'] \ar[r, hook] & Y \ar[d, "\nu"] \\
      C \ar[r, hook] & X
    \end{tikzcd}
  \end{equation*}
  be the conductor square.
  Let
  \begin{equation*}
    Y = \coprod_{1 \le i \le r} Y_i
  \end{equation*}
  be the decomposition into connected components and let $\BY_i = \BY \cap Y_i \subset Y_i$.
  Set $\sO_X(1) = \omega_X^\inv$, $\sO_Y(1) = \nu\pull \omega_X^\inv$ and $\sO_{Y_i}(1) = (\nu\restrict{Y_i})\pull \omega_X^\inv$. Note that all of these are ample line bundles since $X$ is a Gorenstein del Pezzo surface and $\nu$ is a finite morphism.
  Set $d_i = (\sO_{Y_i}(1))\powndot{2}$ and $d = \sum_i d_i = (\sO_X(1))\powndot{2}$.
\end{notation}
In this setting, since $X$ is the pushout of the diagram $C \leftarrow \BY \to Y$, the classification of nonnormal Gorenstein del Pezzo surfaces is reduced to that of such diagrams.
Reid first classifies each component $(Y_i, \sO_{Y_i}(1), \BY_i)$:
\begin{theorem}[{\cite[Theorem 1.1]{Reid94}}]\label{thm:normalization-dP-classification}
  Let $X$ be a nonnormal Gorenstein del Pezzo surface. We use the notation of \myref{nota:nonnormal-dP}.
  Then each $\BY_i$ is isomorphic to a plane conic (that is, a smooth conic, a line pair or a double line in $\PP^2$) and the possibilities for $(Y_i, \sO_{Y_i}(1), \BY_i)$ are as in~\myref{tab:normalization}.
  \begin{table}[h]
    \centering
    \caption{Reid's table. For cases (d) and (e), let $F$ be a fiber and $E$ the negative section of the rational ruled surface.}
    \label{tab:normalization}
    \begin{tblr}{cccccc}
      \hline
      Case & $Y_i$ & $\sO_{Y_i}(1)$ & $\sO_{Y_i}(\BY_i)$ & $d_i$ & $\BY_i$ \\ \hline\hline
      & & & & & smooth conic \\
      (a) & $\PP^2$ & $\sO_{\PP^2}(1)$ & $\sO_{\PP^2}(2)$ & $1$ & line pair \\
      & & & & & double line \\ \hline
      (b) & $\PP^2$ & $\sO_{\PP^2}(2)$ & $\sO_{\PP^2}(1)$ & $4$ & smooth conic \\ \hline
      & & & & \SetCell[r=2]{m} $\ge 2$ & line pair \\
      (c) & $\PP(1,1,d)$ & $\sO_{\PP(1,1,d)}(d)$ & $\sO_{\PP(1,1,d)}(2)$ & & double line \\ \cline{5-6}
      & & & & $2$ & smooth conic \\ \hline
      \SetCell[r=2]{m} (d) & \SetCell[r=2]{m} $\rationalruled{d-2}$ & \SetCell[r=2]{m} $\sO_{\rationalruled{d-2}}((d - 1)F + E)$ & \SetCell[r=2]{m} $\sO_{\rationalruled{d-2}}(F + E)$ & $\ge 2$ & line pair \\ \cline{5-6}
      & & & & $2$ or $3$ & smooth conic \\ \hline
      (e) & $\rationalruled{d-4}$ & $\sO_{\rationalruled{d-4}}((d - 2)F + E)$ & $\sO_{\rationalruled{d-4}}(E)$ & $\ge 4$ & smooth conic \\ \hline
    \end{tblr}
  \end{table}
\end{theorem}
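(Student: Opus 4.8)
This is \cite[Theorem 1.1]{Reid94}, and the plan is to follow Reid's argument. Set $H_i \coloneqq \sO_{Y_i}(1) = (\nu\restrict{Y_i})\pull\omega_X^\inv$, a line bundle. First I would establish the conductor adjunction isomorphism: since $X$ is Gorenstein, hence \SerreS{2}, and $C$ has pure codimension one, the cocartesian conductor square gives an isomorphism of reflexive sheaves $\nu\pull\omega_X \isom \omega_Y(\BY)$, where $\omega_Y$ is the reflexive sheaf attached to $K_Y$. Restricted to $Y_i$ this reads $K_{Y_i} + \BY_i \sim -H_i$ with $-H_i$ Cartier and anti-ample. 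Here $\BY_i \neq 0$: otherwise $\nu\restrict{Y_i}$ would identify $Y_i$ with an open and closed subscheme of $X$, forcing $X = Y_i$ to be normal. Hence $-K_{Y_i} \sim H_i + \BY_i$ is a sum of an ample divisor and a nonzero effective one, so $-K_{Y_i}$ is big and $Y_i$ is a rational projective surface.

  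Next I would pin down $\BY_i$. Restricting the adjunction isomorphism along $\BY_i$ and taking degrees gives $H_i\cdot\BY_i = 2 - 2 p_a(\BY_i)$ (arithmetic genus). Since $H_i$ is ample and $\BY_i \neq 0$, the left-hand side is at least $1$, so $p_a(\BY_i) \le 0$; using that $\BY_i$ is the conductor divisor --- a purely one-dimensional Cohen--Macaulay subscheme of $Y$ --- one improves this to $p_a(\BY_i) = 0$ and $H_i\cdot\BY_i = 2$. A short analysis of which genus-zero curves can occur as such a conductor then shows $\BY_i$ is abstractly a plane conic (a smooth conic, a line pair, or a double line), with $H_i\restrict{\BY_i}$ its degree-two anticanonical polarization.

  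Finally I would identify the polarized surface $(Y_i, H_i, \BY_i)$. The key point is that $-K_{Y_i} \sim H_i + \BY_i$ with $H_i$ ample forces $H_i$ to be very close to $-K_{Y_i}$: for a $K_{Y_i}$-negative extremal curve $\ell$ one has $1 \le -K_{Y_i}\cdot\ell = H_i\cdot\ell + \BY_i\cdot\ell$, so either $H_i\cdot\ell = 1$ and $\ell$ is disjoint from $\BY_i$, or $\ell$ is a component of $\BY_i$. Running a $K_{Y_i}$-MMP and using $H_i^2 = d_i$ and $H_i\cdot\BY_i = 2$ to bound the number of extremal contractions excludes every rational surface except $\PP^2$, the Hirzebruch surfaces $\rationalruled{n}$, and the cones $\PP(1,1,d)$ over rational normal curves (intuitively, too many contractions would force $H_i \sim -K_{Y_i}$, contradicting $\BY_i \neq 0$). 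On each of these three the class group is explicit, so the relation $-K_{Y_i} \sim H_i + \BY_i$ with $H_i$ ample Cartier, $\BY_i$ effective and $H_i\cdot\BY_i = 2$ has exactly the solutions listed in \myref{tab:normalization}; the range of $d_i$ and the isomorphism type of $\BY_i$ in the last two columns then follow from whether $\BY_i$ passes through the singular point of $\PP(1,1,d)$ or meets the negative section of $\rationalruled{n}$.

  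The hard part is this last step --- controlling which rational surfaces $Y_i$ occur. In equal characteristic zero Fujita and Mori handle the analogous question via Mori's classification of three-dimensional extremal contractions, which is unavailable here; Reid's substitute is a hands-on, characteristic-free study of the cone of curves of the (possibly singular) surface $Y_i$ together with the local structure of $X$ along $C$ and the degree --- one or two --- of the finite map $\BY_i \to C$. An added subtlety throughout is that $Y_i$ need not be Gorenstein: in the cone case it carries a non-Du-Val cyclic quotient singularity through which $\BY_i$ may pass, so the adjunction identities must be read as statements about reflexive sheaves rather than line bundles.
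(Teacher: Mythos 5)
This theorem is not proved in the paper at all: it is quoted verbatim from \cite[Theorem 1.1]{Reid94}, and the text that follows it (\myref{rmk:Reid-table}) only records consequences of the table. So there is no in-paper argument to compare yours against; what you have written is a reconstruction of Reid's own proof, and as such it is structurally faithful: conductor adjunction $K_{Y_i}+\BY_i\sim -\sO_{Y_i}(1)$, nonemptiness of $\BY_i$, the degree computation, and then a classification of the polarized pairs $(Y_i,\sO_{Y_i}(1))$. Two points deserve flagging. First, the step you describe as ``one improves this to $p_a(\BY_i)=0$'' is the real content of Reid's key lemma, not a routine upgrade: adjunction only gives $\sO_{Y_i}(1)\cdot\BY_i = 2-2p_a(\BY_i)$, and a priori the conductor on a single component could be disconnected or of negative arithmetic genus, giving degree $4$ or more; excluding this uses the local structure of the nonnormal locus of $X$ (via general hyperplane sections of the anticanonically embedded $X$) and is where most of the work sits. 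Second, your extremal-curve dichotomy is false as literally stated: on $\rationalruled{d-2}$ in case (d) a fiber $F$ has $-K\cdot F=2$, $\sO_{Y_i}(1)\cdot F=\BY_i\cdot F=1$, so $F$ meets $\BY_i$ without being a component of it; the dichotomy is only valid for curves with $-K\cdot\ell=1$, i.e.\ the $(-1)$-curves actually contracted in the MMP. A cleaner and fully characteristic-free way to finish is to observe that $\sO_{Y_i}(1)\cdot\BY_i=2$ forces the sectional genus $g(Y_i,\sO_{Y_i}(1))=1+\tfrac12(K_{Y_i}+H_i)\cdot H_i$ to vanish, and then invoke the classification of polarized surfaces of sectional genus zero ($\Delta$-genus theory): the only possibilities are $(\PP^2,\sO(1))$, $(\PP^2,\sO(2))$, the cones $\PP(1,1,d)$ with $\sO(d)$, and the scrolls $\rationalruled{a}$ with the polarizations listed --- exactly Reid's table --- after which the last two columns follow from the effective decompositions of $-K_{Y_i}-H_i$ as you say.
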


\begin{remark}\label{rmk:Reid-table}
  We give a few remarks regarding \myref{tab:normalization}.
  \begin{itemize}
    \item We have $\sO_{Y_i}(1) \ndot \BY_i = 2$.
    \item $\sO_{Y_i}(1)$ is very ample and defines an embedding $Y_i \into \PP^{d_i + 1}$.
    \item The choice of a conic $\BY_i \subset Y_i$ of a fixed type is unique up to isomorphism. For example, any two smooth conics in $\PP^2$ are mapped to each other by an automorphism of $\PP^2$, and a similar remark applies to the other cases of $(Y_i, \BY_i)$.
  \end{itemize}
\end{remark}

Reid classified \emph{tame} nonnormal Gorenstein del Pezzo surfaces in the following sense.
\begin{definition}[{\cite[4.7]{Reid94}}]\label{def:tame}
  Let $X$ be a Gorenstein del Pezzo surface and we use the notation of \myref{nota:nonnormal-dP}.
  Then $X$ is said to be \emph{tame} if either $\BY$ is reduced or $C\redn$ is smooth.
  By convention, we say that normal Gorenstein del Pezzo surfaces are tame.
\end{definition}


\begin{theorem}[{\cite[Corollary 4.10 (1)]{Reid94}}]
  Let $X$ be a Gorenstein del Pezzo surface.
  Then $X$ is tame if and only if $\Hh^1(X, \sO_X) = 0$.
\end{theorem}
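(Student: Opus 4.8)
The plan is to read off $\Hh^1(X,\sO_X)$ from the long exact cohomology sequence of the conductor exact sequence, using the classification of the normalization in \myref{thm:normalization-dP-classification} together with \myref{thm:normal-dP-classification} in the normal case. First I would dispose of the normal case: by \myref{thm:normal-dP-classification}, either $X$ is an RDP del Pezzo surface — and since rational double points are rational singularities, $\Hh^1(X,\sO_X) \isom \Hh^1(\widetilde{X},\sO_{\widetilde{X}})$ for the minimal resolution $\widetilde{X}$, which is a rational surface, so this vanishes — or $X = \Proj\sectionRing{E}{\sA}[t]$ is the cone over an elliptic curve, for which $\Hh^1(X,\sO_X) = 0$ follows from a standard graded local cohomology computation, the anticanonical section ring $\sectionRing{E}{\sA}[t]$ being Cohen–Macaulay of dimension $3$ (since $\sectionRing{E}{\sA}$ is a two-dimensional normal domain). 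Normal Gorenstein del Pezzo surfaces being tame by convention, this settles that case.

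Now suppose $X$ is nonnormal and adopt the notation of \myref{nota:nonnormal-dP}. The crucial observation is that every component $Y_i$ in \myref{tab:normalization} is one of $\PP^2$, $\PP(1,1,d)$, or a rational ruled surface, each of which satisfies $\Hh^1(Y_i,\sO_{Y_i}) = 0$; hence $\Hh^1(Y,\sO_Y) = 0$. Feeding this into the long exact sequence of $0 \to \sO_X \to \nu\push \sO_Y \oplus \sO_C \to \nu\push \sO_\BY \to 0$ and using $\Hh^0(X,\sO_X) = k$, I obtain
\[
  0 \to \operatorname{coker}\!\bigl(\Hh^0(Y,\sO_Y) \oplus \Hh^0(C,\sO_C) \to \Hh^0(\BY,\sO_\BY)\bigr) \to \Hh^1(X,\sO_X) \to \ker\!\bigl(\Hh^1(C,\sO_C) \to \Hh^1(\BY,\sO_\BY)\bigr) \to 0\text.
\]
So $\Hh^1(X,\sO_X) = 0$ exactly when $\Hh^0(Y,\sO_Y) \oplus \Hh^0(C,\sO_C) \to \Hh^0(\BY,\sO_\BY)$ is surjective and $\Hh^1(C,\sO_C) \to \Hh^1(\BY,\sO_\BY)$ is injective, and the whole question now reduces to an analysis of the finite surjection $\nu\restrict{\BY}\colon \BY \to C$, done component by component using that each $\BY_i$ is a plane conic and that the Gorenstein hypothesis forces the adjunction relation $\nu\pull\omega_X \isom \omega_Y(\BY)$, which severely restricts the gluing data $(C,\nu\restrict{\BY})$.

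For the ``if'' direction I would treat the two cases in the definition of tameness separately. If $\BY$ is reduced, each $\BY_i$ is a smooth conic or a line pair, $\nu\restrict{\BY}$ assembles these into a curve $C$ which is again a controlled configuration of rational curves, and one checks directly that the constants of $\sO_Y$ and $\sO_C$ span those of $\sO_\BY$ and that $\Hh^1(C,\sO_C) \hookrightarrow \Hh^1(\BY,\sO_\BY)$, from the explicit local form of $\BY \to C$. If instead $C\redn$ is smooth, then $C$ is a disjoint union of (possibly thickened) smooth rational curves, the torsion sheaf $\operatorname{coker}(\sO_C \to \nu\restrict{\BY}\push\sO_\BY)$ is supported at finitely many points, and the same two conditions are verified there. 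For the ``only if'' direction I argue contrapositively: if $\BY$ is nonreduced (so some $\BY_i$ is a double line) and $C\redn$ is singular, then a local analysis at a singular point of $C\redn$ — comparing the completed local rings of $X$, $Y$, and $C$ through the conductor square — produces a nonzero class in the cokernel or the kernel above, whence $\Hh^1(X,\sO_X)\ne 0$.

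I expect this last step to be the main obstacle: in the genuinely wild case one must determine exactly which nonreduced gluings of the conics in \myref{tab:normalization} are realized by a Gorenstein del Pezzo surface and check positivity of $\Hh^1$ in each, which is where Reid's careful bookkeeping of the pushout data becomes indispensable; I would follow his enumeration rather than seek a uniform shortcut. By contrast the normal case and the vanishing $\Hh^1(Y,\sO_Y) = 0$ are routine.
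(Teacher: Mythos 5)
First, note that the paper does not really prove this statement: its ``proof'' disposes of the normal case by citing Hidaka--Watanabe and declares the nonnormal case to be exactly Reid's Corollary 4.10~(1). Your treatment of the normal case is therefore more self-contained than the paper's and is correct: RDPs are rational singularities and the minimal resolution is a rational surface, and for the elliptic cone the section ring $\sectionRing{E}{\sA}[t]$ is indeed Cohen--Macaulay of dimension $3$, so $\Hh^1(X,\sO_X) \subset \Hh^2_\frakm(\sectionRing{E}{\sA}[t]) = 0$. Your setup for the nonnormal case --- the four-term exact sequence
\[
  0 \to \operatorname{coker}\bigl(\Hh^0(\sO_Y) \oplus \Hh^0(\sO_C) \to \Hh^0(\sO_\BY)\bigr) \to \Hh^1(X,\sO_X) \to \ker\bigl(\Hh^1(\sO_C) \to \Hh^1(\sO_\BY)\bigr) \to 0
\]
obtained from the conductor square together with $\Hh^1(Y,\sO_Y)=0$ --- is also sound and is in fact how Reid's own argument begins.

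The gap is that everything after this reduction is asserted rather than proved, and the part you defer is precisely the content of the cited result. For the ``only if'' direction you need to analyze non-tame surfaces, but every classification statement available in the paper (\myref{thm:irreducible-dP-classification}, \myref{thm:reducible-dP-classification}, \myref{exa:Reid-reducible}) is stated only for \emph{tame} $X$; ``following Reid's enumeration'' of the wild gluings is exactly invoking~\cite[Corollary 4.10~(1)]{Reid94}, so no independent argument has been given. The ``if'' direction is also only sketched, and one concrete claim there is false: in the case where $\BY$ is nonreduced but $C\redn$ is smooth, $\operatorname{coker}(\sO_C \to (\nu\restrict{\BY})\push\sO_\BY)$ is \emph{not} a torsion sheaf supported at finitely many points --- $\nu\restrict{\BY}$ is generically $2{:}1$ onto $C$ (cf.\ \myref{thm:irreducible-dP-classification}~(1)), so this cokernel has rank one along $C$ (already visible for the cuspidal cone of \myref{exa:Reid-irreducible}~(2), where $C$ is a reduced line and $\BY$ a double line). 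The correct verification that the $\Hh^0$-map is surjective and the $\Hh^1$-map injective in the tame cases must therefore engage with this rank-one piece, not dismiss it as torsion. In short: the skeleton is right, but the statement is not proved; either carry out Reid's case analysis of the gluing data in both directions, or do as the paper does and cite~\cite[Corollary 4.10~(1)]{Reid94} outright.
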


\begin{proof}
  When $X$ is normal, then $\Hh^1(X, \sO_X) = 0$ holds by \cite[Corollary~2.5]{HidakaWatanabe81} and $X$ is tame by definition. When $X$ is not normal, the assertion is proved in~\cite[Corollary 4.10 (1)]{Reid94}.
\end{proof}

We divide the classification into two cases: the irreducible case and the reducible case.

\begin{theorem}[{\cite[Theorem 1.5]{Reid94}}]\label{thm:irreducible-dP-classification}
  Let $X$ be an irreducible, nonnormal, tame Gorenstein del Pezzo surface, and we use the notation of \myref{nota:nonnormal-dP}.
  Recall that $\BY$ is isomorphic to a plane conic.
  Then the following hold.
  \begin{enumerate}
    \item $C \isom \PP^1$ and $\nu\restrict{\BY} \colon \BY \to C$ is a linear projection of the plane conic $\BY$ onto the line $C$. In particular, $\nu\restrict{\BY}$ is a double cover.
    \item The isomorphism class of $X$ is determined by $(Y, \BY)$, unless $p = 2$. If $p = 2$ and $\BY$ is a smooth conic, there are two isomorphism classes of $X$, depending on whether $\nu\restrict{\BY}$ is inseparable.
  \end{enumerate}
\end{theorem}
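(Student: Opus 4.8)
The plan is to combine the classification of the normalisation in \myref{thm:normalization-dP-classification} with a local analysis of the conductor. Since $X$ is irreducible, its normalisation $Y = Y_1$ is irreducible, and $(Y, \sO_Y(1), \BY)$ with $\BY = \BY_1$ is one of the cases~(a)--(e) of \myref{tab:normalization}; recall from \myref{rmk:Reid-table} that $\sO_Y(1)$ is very ample with $\sO_Y(1)\ndot\BY = 2$, and that $X$ is the pushout $Y\amalg_{\BY}C$ of the conductor square. For part~(1) I would first show that $\nu\restrict{\BY}\colon\BY\to C$ is generically of degree two and that $C\isom\PP^1$, then recognise the resulting degree-two map out of the plane conic $\BY$ as a linear projection; part~(2) would then reduce to counting the gluings up to isomorphism.

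For the degree, I would localise at the generic point $\eta$ of a component of $C$: then $A = \sO_{X,\eta}$ is a one-dimensional Gorenstein local ring whose normalisation $\widetilde A = \sO_Y\otimes_{\sO_X}\sO_{X,\eta}$ is a finite product of discrete valuation rings, and the conductor ideal $\mathfrak c$ satisfies the standard identity $\ell_A(\widetilde A/\mathfrak c) = 2\,\ell_A(A/\mathfrak c)$ for one-dimensional Gorenstein local rings. By tameness (\myref{def:tame}) and the structure of the conductor, $C$ is reduced, so $\ell_A(A/\mathfrak c) = 1$ and hence $\ell_A(\widetilde A/\mathfrak c) = 2$ --- exactly the statement that $\nu\restrict{\BY}$ is generically of degree two. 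That $C\isom\PP^1$ then follows: in the line-pair and double-line cases because $\sO_Y(1)$ has degree one on each irreducible component of $\BY\redn$, which therefore maps isomorphically onto $C\redn$; the smooth-conic case by a direct argument (one checks $p_a(C) = 0$). As $\BY$ is Cohen--Macaulay and $C\isom\PP^1$ is regular, $\nu\restrict{\BY}$ is then finite flat of degree two, i.e.\ a double cover. To see that it is a linear projection, let $\Lambda\isom\PP^2$ be the plane spanned by the conic $\BY$ in its $\sO_Y(1)$-embedding, so $\sO_\BY(1)\coloneqq\sO_Y(1)\restrict{\BY} = \sO_\Lambda(1)\restrict{\BY}$. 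Since $\sO_Y(1) = \nu\pull\sO_X(1)$, we have $\sO_\BY(1) = (\nu\restrict{\BY})\pull\bigl(\sO_X(1)\restrict{C}\bigr) \isom (\nu\restrict{\BY})\pull\sO_{\PP^1}(1)$, so $\nu\restrict{\BY}\colon\BY\to C\isom\PP^1$ is the morphism attached to a two-dimensional base-point-free subspace $V\subseteq\Hh^0(\BY,\sO_\BY(1))$. Because $\BY$ is a plane conic, restriction gives an isomorphism $\Hh^0(\Lambda,\sO_\Lambda(1)) \isom \Hh^0(\BY,\sO_\BY(1))$ of three-dimensional vector spaces; hence $V$ corresponds to a pencil of lines in $\Lambda$, and base-point-freeness on $\BY$ forces its base point $P$ to lie off $\BY$. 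So $\nu\restrict{\BY}$ is the linear projection of the plane conic $\BY$ from the external point $P$, which completes~(1).

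For part~(2): the normalisation and the conductor of $X$ are canonical, so any isomorphism $X\isom X'$ restricts to compatible isomorphisms $Y\isom Y'$, $\BY\isom\BY'$ and $C\isom C'$; conversely $X$ is reconstructed from $(Y,\BY)$ together with the gluing map $\nu\restrict{\BY}$. Fixing the isomorphism type of $(Y,\BY)$, unique of its kind by \myref{rmk:Reid-table}, and using~(1), the isomorphism classes of $X$ biject with the orbits of the image $G$ of $\operatorname{Aut}(Y,\BY)\to\mathrm{PGL}(\Lambda)$ on $\{P\in\Lambda\setminus\BY\}$ (post-composing with $\operatorname{Aut}(\PP^1)$ leaves $P$ unchanged). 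A case-by-case check of~(a)--(e) shows that $G$ is the full stabiliser of the conic $\BY$ in $\mathrm{PGL}(\Lambda)$. One then concludes: if $\BY$ is a line pair or a double line, $G$ acts transitively on $\Lambda\setminus\BY$, so $X$ is unique; if $\BY$ is a smooth conic and $p\neq 2$, the image of $\mathrm{PGL}_2$ under $\mathrm{Sym}^2$ is transitive on $\Lambda\setminus\BY$ (any two distinct point-pairs of $\PP^1$ lie in a single $\mathrm{PGL}_2$-orbit), so $X$ is again unique; and if $\BY$ is a smooth conic and $p = 2$, the conic has a nucleus $N\in\Lambda\setminus\BY$ --- the common point of all its tangent lines --- fixed by every element of $G$, so $\Lambda\setminus\BY$ is exactly the union of the two $G$-orbits $\{N\}$ and $\Lambda\setminus(\BY\cup\{N\})$. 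Projection from $N$ realises the Frobenius of $\BY\isom\PP^1$ and is therefore purely inseparable, while projection from any $P\neq N$ is separable; hence there are exactly two isomorphism classes of $X$, distinguished by whether $\nu\restrict{\BY}$ is inseparable. Finally one verifies that every such pushout really is a Gorenstein del Pezzo surface, so that no orbit is spurious.

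I expect the main obstacle to be the two case-by-case inputs on the explicit surfaces of \myref{tab:normalization}: verifying that $\operatorname{Aut}(Y,\BY)$ surjects onto the stabiliser of the conic $\BY$ in $\mathrm{PGL}(\Lambda)$ --- most delicate for the weighted projective plane $\PP(1,1,d)$ and the Hirzebruch surfaces, and when $\BY$ is non-reduced --- and confirming that each pushout along a degree-two linear projection is Gorenstein with ample anticanonical sheaf. The rest of the argument is formal.
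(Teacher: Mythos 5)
The paper gives no proof of this statement: it is quoted verbatim from \cite[Theorem 1.5]{Reid94}, so there is no in-paper argument to measure yours against. Your reconstruction follows what is essentially Reid's strategy --- recover $X$ as the pushout of the conductor square, identify $\nu\restrict{\BY}$ as a linear projection via the very ample $\sO_Y(1)$, and count gluings as orbits of $\operatorname{Aut}(Y,\BY)$ on projection centres --- and the skeleton is sound; the linear-projection step in particular is argued cleanly.

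Two points need attention. First, your deduction that $C$ is reduced (needed for $\ell_A(A/\mathfrak{c})=1$ and hence for the degree-two claim) does not follow from tameness alone when $\BY$ is a double line: in that case tameness only says $C\redn$ is smooth, and a priori $\BY\redn\to C\redn$ could have degree two, making $C$ generically non-reduced --- note that $C$ really is non-reduced in the reducible double-line case, cf.\ \myref{exa:Reid-reducible}(3), so irreducibility must enter somewhere. You can close the gap with a projection-formula count: $1=\sO_Y(1)\ndot\BY\redn=\deg(\BY\redn\to C\redn)\cdot(\sO_X(1)\ndot C\redn)$, and the second factor is a positive integer, forcing the degree to be one; a similar count also substitutes for the unproved assertion $p_a(C)=0$ in the smooth-conic case. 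Second, the two verifications you defer are genuinely load-bearing, not routine: surjectivity of $\operatorname{Aut}(Y,\BY)\to\operatorname{Stab}(\BY)\subset\mathrm{PGL}(\Lambda)$ in each of cases (a)--(e) is exactly what turns the orbit count into a count of isomorphism classes of $X$ rather than an upper bound, and the Gorenstein/ampleness check on the pushout is what guarantees both characteristic-two orbits are actually realised by del Pezzo surfaces. As written, the proposal is a correct outline with these gaps rather than a complete proof.
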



\begin{example}[{\cite[1.3 and 1.4]{Reid94}}]\label{exa:Reid-irreducible}
  We present examples of irreducible, nonnormal, tame Gorenstein del Pezzo surfaces via anticanonical embeddings.
  \begin{enumerate}
    \item Let $(Y, \BY) = (Y_1, \BY_1)$ be a pair from \myref{tab:normalization} such that $d \ge 3$. Embed $Y$ into $\PP^{d + 1}$ by $\sO_{Y}(1)$ and let $\Pi \isom \PP^2$ be the plane containing $\BY$. Take a linear projection $Y \to X \subset \PP^d$ from a point $y \in \Pi \setminus \BY$. Then $X$ is a Gorenstein del Pezzo surface of degree $d \ge 3$.
    \item We give more explicit descriptions when $(Y, \BY)$ is of case (c). If $\BY$ is a line pair, $X$ is the projective cone of a nodal rational curve of degree $d \ge 3$: for example, $X = \Proj k[s^d+t^d, s^{d-1}t, s^{d-2}t^2, \dots, st^{d-1}, u] \subset \PP^d$. If $\BY$ is a double line, $X$ is the projective cone of a cuspidal rational curve of degree $d \ge 3$: for example, $X = \Proj k[s^d, s^{d-1}t, s^{d-2}t^2, \dots, s^2t^{d-2}, t^d, u] \subset \PP^d$. Note that the embedding dimension of each of these cones at the vertex is $d$.
    \item Nonnormal tame Gorenstein del Pezzo surfaces of degree $d = 1$ (resp.\ $2$) can be described as weighted hypersurfaces of degree $6$ (resp.\ $4$) in $\PP(1,1,2,3)$ (resp.\ $\PP(1,1,1,2)$). See \cite[1.4]{Reid94}.
  \end{enumerate}
\end{example}

\begin{theorem}[{\cite[Theorem 1.5]{Reid94}}]\label{thm:reducible-dP-classification}
  Let $X$ be a reducible tame Gorenstein del Pezzo surface, and we use the notation of \myref{nota:nonnormal-dP}.
  Then the following hold.
  \begin{enumerate}
    \item All $\BY_i$ are isomorphic.
    \item $X$ is one of the surfaces given in \myref{exa:Reid-reducible} below.
  \end{enumerate}
\end{theorem}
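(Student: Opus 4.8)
The plan is to exploit, exactly as in the irreducible case treated by \myref{thm:irreducible-dP-classification}, that the conductor square is cocartesian, so that $X$ is the pushout of $C \leftarrow \BY \to Y$ in the category of schemes. Since \myref{thm:normalization-dP-classification} already pins down the possibilities for the pieces $(Y_i, \sO_{Y_i}(1), \BY_i)$, classifying a reducible $X$ reduces to classifying the \emph{gluing datum}: the conductor curve $C$ together with the polarized finite morphism $\nu\restrict{\BY} \colon \BY \to C$. Here $r \ge 2$, and the problem becomes the combinatorial one of deciding in how many ways the surfaces $Y_1, \dots, Y_r$ can be glued along their conductor conics so as to produce a connected Gorenstein surface with ample anticanonical class.

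First I would record the numerical constraints. Grothendieck duality for the finite morphism $\nu$, together with the fact that $\BY$ cuts out the conductor, yields the adjunction relation $\omega_Y \isom \nu\pull\omega_X(-\BY)$, that is, $K_{Y_i} + \BY_i \sim -\sO_{Y_i}(1)$ (as Weil divisor classes) on each component; in particular $\sO_{Y_i}(1) \ndot \BY_i = 2$, in agreement with \myref{rmk:Reid-table}. Next, because $X$ is tame we have $\Hh^1(X, \sO_X) = 0$, and $\Hh^2(X, \sO_X) = \Hh^0(X, \omega_X) = 0$ since $\omega_X^\inv$ is ample on the connected reduced scheme $X$; hence $\chi(\sO_X) = 1$. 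Substituting this into the conductor exact sequence
\[
  0 \to \sO_X \to \nu\push\sO_Y \oplus \sO_C \to \nu\push\sO_\BY \to 0
\]
and using that each $Y_i$ is a rational surface (so $\chi(\sO_{Y_i}) = 1$) and each $\BY_i$ is a plane conic (so $\chi(\sO_{\BY_i}) = 1$), I obtain $\chi(\sO_C) = 1$: the conductor curve $C$ has arithmetic genus $0$.

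The main step is the combinatorial analysis of the gluing. Over the generic point of each irreducible component $C_\alpha$ of $C$, the local ring of $X$ is a one-dimensional reduced Gorenstein ring whose branches are indexed by the components of $\BY$ lying over $C_\alpha$; Gorenstein-ness restricts both the number of such branches and the way the remaining components of the various $\BY_i$ can be distributed over the other components of $C$. Combining this with the facts that each $\BY_i$ is a plane conic of $\sO_{Y_i}(1)$-degree $2$ (so it has at most two components), that $\chi(\sO_C) = 1$ (so $C$ carries no ``loop''), and that $X$ is connected with $\omega_X^\inv$ ample, one cuts the possibilities down to a short list of gluing patterns. Along each of these the conductor conics on the two sides of an identification must agree, which forces all $\BY_i$ to be isomorphic --- part (1). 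Finally, running through the surviving patterns, identifying each $(Y_i, \sO_{Y_i}(1))$ via \myref{tab:normalization} and computing $d = \sum_i d_i$, one obtains precisely the surfaces listed in \myref{exa:Reid-reducible} --- part (2).

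I expect the enumeration itself to be the genuine obstacle, for two reasons. First, the non-reduced conductor cases --- where some $\BY_i$ is a double line, so that $\nu\restrict{\BY}$ is ramified over the corresponding component of $C$ and $C$ itself may fail to be reduced --- have to be handled separately, and one must rule out the ``chain'' and ``cycle-of-surfaces'' configurations that a crude degree count alone would permit, using $\chi(\sO_C) = 1$ together with the definition of tameness (\myref{def:tame}). Second, exactly as in \myref{thm:irreducible-dP-classification}(2), when $p = 2$ an inseparable gluing morphism $\nu\restrict{\BY_i}$ can occur and contribute extra isomorphism classes, so the characteristic-two case requires its own treatment; keeping this bookkeeping consistent with the uniqueness-up-to-isomorphism statements of \myref{rmk:Reid-table} is the most delicate part of the argument.
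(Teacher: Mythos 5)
The paper does not prove this statement at all: it is imported verbatim from \cite[Theorem 1.5]{Reid94}, so there is no in-paper argument to compare against. Judged on its own terms, your proposal is a reasonable reconstruction of the \emph{shape} of Reid's argument (pushout along the conductor square, adjunction $K_{Y_i}+\BY_i \sim -\sO_{Y_i}(1)$, the Euler-characteristic count giving $\chi(\sO_C)=1$, and a Gorenstein analysis at the generic points of $C$), and the numerical bookkeeping you do carry out is correct. But it is not a proof: the entire content of the theorem lives in the step you describe as ``one cuts the possibilities down to a short list of gluing patterns'' and then explicitly defer as ``the genuine obstacle.'' Neither conclusion (1) nor (2) is actually derived. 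For (1), the assertion that ``the conductor conics on the two sides of an identification must agree'' does not by itself force all $\BY_i$ to be abstractly isomorphic: in a chain or cycle configuration with $r\ge 3$, the two lines of a line-pair $\BY_i$ are glued to \emph{different} neighbors, so one must argue (using that each $\BY_i$ is a plane conic of $\sO_{Y_i}(1)$-degree $2$, together with the local Gorenstein condition at the generic points of $C$, which forces either two birational branches or one degree-two branch) that a smooth conic, a line pair, and a double line cannot coexist among the $\BY_i$. That argument is the theorem, and it is missing.

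Two smaller points. Your worry about an inseparable gluing in characteristic $2$, as in \myref{thm:irreducible-dP-classification}(2), does not arise in the reducible case: once the enumeration is done one finds that each $\nu\restrict{\BY_i}\colon \BY_i \to C\cap X_i$ is birational (cf.\ \myref{rmk:remark-on-reducible}), so there is no double cover to be inseparable --- but seeing this requires exactly the analysis you skipped. Second, for part (2) it is not enough to match the pairs $(Y_i,\sO_{Y_i}(1),\BY_i)$ against \myref{tab:normalization}; one must also show the glued surface is uniquely determined by the combinatorial data (the analogue of \myref{thm:irreducible-dP-classification}(2)), i.e.\ that every admissible pushout is isomorphic to one of the explicit models in \myref{exa:Reid-reducible}. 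Your proposal does not address this uniqueness step.
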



\begin{example}[{\cite[1.3 and 1.4]{Reid94}}]\label{exa:Reid-reducible}
  We present examples of reducible tame Gorenstein del Pezzo surfaces via anticanonical embeddings. We follow the notation of \myref{nota:nonnormal-dP}.
  \begin{enumerate}
    \item Let $(Y_1, \BY_1)$ and $(Y_2, \BY_2)$ be pairs from \myref{tab:normalization} with $\BY_1$ and $\BY_2$ smooth conics, and set $d = d_1 + d_2$. Assume that not both are from case (a) (so that $d \ge 3$). Each $Y_i$ is embedded into $\PP^{d_i + 1}$ by $\sO_{Y_i}(1)$. Embed each $\PP^{d_i + 1}$ into $\PP^d$ so that they intersect only in a plane $\Pi \isom \PP^2 \subset \PP^d$ and both $\BY_i$ map to a common smooth conic $C$ in $\Pi$. Then $X = X_1 \cup X_2 \subset \PP^d$ is a tame Gorenstein del Pezzo surface, where $X_i$ is the image of $Y_i$.
    \item Let $(Y_i, \BY_i)$ ($1 \le i \le r$, $r \ge 2$) be pairs from \myref{tab:normalization} with all $\BY_i$ line pairs. Write $\BY_i = \Gamma_i \cup \Gamma'_i$. Assume that when $r = 2$, not both are from case (a) (so that $d \ge 3$). Fix $\PP^r \subset \PP^d$, choose homogeneous coordinates $[x_0:\dots:x_r]$ on $\PP^r$, and set $O \coloneqq [1:0:\dots:0] \in \PP^r$. Let $C \subset \PP^r$ be the union of the coordinate lines $L_i \coloneqq (x_j = 0 \text{ for } j \neq 0,i)$ through $O$. We regard the indices $i$ cyclically: we set $L_0 \coloneqq L_r$. Embed each $\PP^{d_i + 1}$ into $\PP^d$ so that $\Gamma_i$ and $\Gamma'_i$ are sent to $L_{i-1}$ and $L_i$, respectively, and such that $\PP^{d_i + 1}$ intersects the linear span of $\bigcup_{l \neq i}\PP^{d_l + 1}$ only in the plane spanned by $L_{i-1}$ and $L_i$. Then $X = \bigcup X_i \subset \PP^d$ is a tame Gorenstein del Pezzo surface, where $X_i$ is the image of $Y_i$.
    \item Let $(Y_i, \BY_i)$ ($1 \le i \le r$, $r \ge 2$) be pairs from \myref{tab:normalization} with all $\BY_i$ double lines. Assume that when $r = 2$, not both are from case (a) (so that $d \ge 3$). Let $\Gamma$ be a line in $\PP^r \subset \PP^d$, and let $C$ be the first-order neighborhood of $\Gamma$ in $\PP^r$, defined by $\sI_\Gamma^2$. Let $\Pi_1, \dots, \Pi_r$ be planes in $\PP^r$ containing $\Gamma$ such that any $r-1$ of these span $\PP^r$. Embed $\PP^{d_i + 1}$ into $\PP^d$ so that all $(\BY_i)\redn$ map to $\Gamma$ and so that each $\PP^{d_i + 1}$ intersects the linear span of $\bigcup_{l \neq i}\PP^{d_l + 1}$ only in $\Pi_i$. Then $X = \bigcup X_i \subset \PP^d$ is a tame Gorenstein del Pezzo surface, where $X_i$ is the image of $Y_i$.
    \item Let $Y_1 = Y_2 = \PP^2$ and let $\BY_1 = \BY_2$ be a plane conic $(q = 0)$. Then $X = (w(w - q) = 0) \subset \PP(1,1,1,2)$ is a tame Gorenstein del Pezzo surface of degree $2$.
  \end{enumerate}
\end{example}

\begin{remark}\label{rmk:remark-on-reducible}
  Under the setting of \myref{thm:reducible-dP-classification}, it follows that $Y_i \to X_i$ and $\BY_i \to C \cap X_i$ are isomorphisms.
\end{remark}

\subsection{Ring of Witt vectors}
Throughout this subsection, we fix a prime number $p$.
We recall the definition of the ring of ($p$-typical) Witt vectors (see \cite[II,~\S~6]{Serre79:Local-fields} and \cite[Section~1.3]{Illusie79:deRhamWitt} for the details).
Note that we consider possibly non-Noetherian rings that are not necessarily of characteristic $p$.

For $n \ge 0$, write
\[
  \varphi_n(X_0,\dots,X_n) \coloneqq X_0^{p^n} + pX_1^{p^{n-1}} + \dots + p^nX_n \in \ZZ[X_0,\dots,X_n] \text{.}
\]

Let $A$ be a ring not necessarily of characteristic $p$.
Then we write $\W(A)$ for the \emph{ring of ($p$-typical) Witt vectors} of $A$. We state its basic properties. We have
\[
  \W(A) = \prod_{i = 0}^\infty A
\]
as a set, and the map
\[
  \varphi \colon \W(A) \to \prod_{n = 0}^\infty A, \quad (a_0,a_1,a_2,\dotsc) \mapsto \bigl(\varphi_0(a_0), \varphi_1(a_0,a_1), \varphi_2(a_0,a_1,a_2), \dotsc\bigr)
\]
is a ring homomorphism.
For every $a \in A$, we write $[a] = (a,0,0,\dotsc) \in \W(A)$.
We have natural maps
\[
  \text{$V \colon \W(A) \to \W(A)$, $R \colon \W(A) \to \W(A)$, and $F \colon \W(A) \to \W(A)$}
\]
that satisfy
\begin{gather*}
  V(a_0,a_1,\dotsc) = (0,a_0,a_1,\dotsc), \quad R(a_0,a_1,a_2,\dotsc) = (a_1,a_2,\dotsc)\text, \\
  \varphi(F(a_0,a_1,a_2,\dotsc)) = \bigl(\varphi_1(a_0,a_1),\varphi_2(a_0,a_1,a_2),\dotsc\bigr)\text{.}
\end{gather*}
Then, $F \colon \W(A) \to \W(A)$ is a ring homomorphism. $V \colon F_*\W(A) \to \W(A)$ and $R \colon \W(A) \to \W(A)$ are $\W(A)$-module homomorphisms.

For $n \ge 1$, we define a ring $\W_n(A)$ by
\[
  \W_n(A) \coloneqq \W(A) / V^n\W(A)\text,
\]
which is called the \emph{ring of ($p$-typical) Witt vectors of length $n$}.
We have $\W_n(A) \isom \prod_{i = 0}^{n - 1} A$ as sets.
Then, for every $n \ge 1$, we obtain a ring homomorphism $F \colon \W_{n + 1}(A) \to \W_n(A)$, and we have $\W_{n+1}(A)$-module homomorphisms
\[
  \text{$V \colon F\push \W_{n}(A) \to \W_{n+1}(A)$ and $R \colon \W_{n+1}(A) \to \W_{n}(A)$.}
\]

We define a sheaf of rings $\W_n\sO_M$ by setting
\[
  \W_n\sO_M(U) \coloneqq W_n(\sO_M(U))
\]
for every open set $U \subset M$.
Then $\W_nM \coloneqq (\abs{M},\W_n\sO_M)$ is a scheme.

\subsection{Quasi-\texorpdfstring{$F$}{F}-splitting}\label{subsec:qfs}
We briefly recall the theory of quasi-$F$-splitting in positive and mixed characteristic~\cite{Yobuko19,Yoshikawa25}.
We still fix a prime number $p$.

First, we review the definition of global $F$-splitting.
\begin{definition}[\cite{MehtaRamanathan85,Smith00:gFreg,SchwedeSmith10}]
  Let $X$ be a reduced $F$-finite Noetherian scheme of characteristic $p > 0$.
  \begin{itemize}
    \item $X$ is said to be \emph{(globally) $F$-split} if the map $\sO_X \to F\push \sO_X$ splits as an $\sO_X$-module homomorphism.
  \end{itemize}
  Suppose that $X$ is normal.
  \begin{itemize}
    \item Let $B$ be an effective $\ZZ_{(p)}$-Weil divisor on $X$ such that $\Kk_X + B$ is $\ZZ_{(p)}$-Cartier. The pair $(X, B)$ is said to be \emph{(globally) $F$-split} if, for some $e > 0$, $(p^e - 1)B$ is Cartier and the natural map $\sO_X \to F^e\push \sO_X \to F^e\push \sO_X((p^e - 1)B)$ splits.
    \item $X$ is said to be \emph{globally $F$-regular} if, for every effective Weil divisor $D$ on $X$, there exists some $e > 0$ such that the natural map $\sO_X \to F^e\push \sO_X \to F^e\push \sO_X(D)$ splits.
  \end{itemize}
\end{definition}


Next, we recall the definition of quasi-$F$-splitting in positive characteristic, generalizing the notion of global $F$-splitting.
\begin{definition}[{\cite{Yobuko19}}]
  Let $X$ be a reduced $F$-finite Noetherian scheme of characteristic $p > 0$.
  We define a module $Q_{X,n}$ and a map $\Phi_{X, n}$ by the following pushout diagram of $W_n\sO_X$-modules: 
  \begin{equation*}
    \begin{tikzcd}
      W_n\sO_X \arrow{r}{F} \arrow[two heads]{d}{R^{n-1}} & F_* W_n \sO_X  \arrow[two heads]{d}\\
      \sO_X \arrow{r}{\Phi_{X, n}}& \arrow[lu, phantom, "\ulcorner" , very near start, xshift=0.6em] Q_{X, n}
    \end{tikzcd}\text{.}
  \end{equation*}
  Here, the natural map $F \colon W_n\sO_X \to F_* W_n \sO_X$ is induced since $X$ is of characteristic $p$.
  Note that $Q_{X,n}$ is an $\sO_X$-module by~\cite[Proposition 2.9]{KTTWYY:qFs1}.
  \begin{itemize}
    \item $X$ is said to be \emph{$n$-quasi-$F$-split} if the map $\Phi_{X, n}$ splits as an $\sO_X$-module homomorphism.
    \item $X$ is said to be \emph{quasi-$F$-split} if it is $n$-quasi-$F$-split for some $n \ge 1$.
  \end{itemize}
\end{definition}

Note that $X$ is $1$-quasi-$F$-split if and only if $X$ is globally $F$-split.

\begin{lemma}[{cf.~\cite[Theorem~4.7]{Schwede09:DB}}]\label{lem:qFs-implies-weakly-normal}
  Let $X$ be a reduced $F$-finite Noetherian scheme of characteristic $p$.
  If $X$ is quasi-$F$-split, then it is weakly normal.
\end{lemma}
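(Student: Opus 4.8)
The plan is to mimic Schwede's argument that $F$-split rings are weakly normal (\cite[Theorem~4.7]{Schwede09:DB}), replacing the Frobenius splitting by the quasi-$F$-splitting. The point that makes this work is that the conductor argument at the core of that proof uses only a retraction which is \emph{compatible with the module structure}, never one which is additive; so the a priori weaker datum of a quasi-$F$-splitting --- which, unlike an $F$-splitting, need not split the Frobenius itself --- still suffices. (For $n = 1$ the argument below reduces to Schwede's.)

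We first reduce to the affine case. Weak normality is local, and a global splitting of $\Phi_{X,n}$ restricts to a splitting over every open subscheme, so we may assume $X = \Spec A$ with $A$ a reduced $F$-finite Noetherian $\mathbb{F}_p$-algebra (hence excellent) which is $n$-quasi-$F$-split for some $n \ge 1$. It then suffices to prove $A = A'$ for
\[
  A' \coloneqq \bigl\{\, y \in \operatorname{Tot} A : y^p \in A \,\bigr\},
\]
which --- using $(y + y')^p = y^p + y'^p$ in characteristic $p$ --- is a subring of $\operatorname{Tot} A$, is reduced, is module-finite over $A$ (it lies in the normalization $A\normaln$, each of its elements being a root of some $T^p - a$ with $a \in A$), and is birational over $A$. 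Indeed, granting $A = A'$: the morphism $\Spec A^{\mathrm{wn}} \to \Spec A$ from the weak normalization is a finite universal homeomorphism of reduced $\mathbb{F}_p$-schemes, so $(A^{\mathrm{wn}})^{p^m} \subseteq A$ for some $m$ (e.g.\ since the perfection of a universal homeomorphism is an isomorphism), and applying the implication ``$z^p \in A \Rightarrow z \in A$'' successively to $b^{p^{m-1}}, b^{p^{m-2}}, \dots, b$ shows $A^{\mathrm{wn}} \subseteq A$, i.e.\ that $A$ is weakly normal.

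To prove $A = A'$: splitting $\Phi_{X,n}$ as $\sO_X$-modules and composing a retraction with the surjection $F\push W_n\sO_X \twoheadrightarrow Q_{X,n}$ yields a $W_n\sO_X$-linear map $\psi_0 \colon F\push W_n\sO_X \to \sO_X$ (where $\sO_X$ is made a $W_n\sO_X$-module along $R^{n-1}$) with $\psi_0 \circ F = R^{n-1}$. Writing $[a] = (a,0,\dots,0) \in W_n\sO_X$ for Teichmüller lifts, we define $\psi \colon A' \to A$ by $\psi(y) \coloneqq \psi_0\bigl((y^p,0,\dots,0)\bigr)$, which is legitimate because $y^p \in A$. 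Two short computations --- invoking $\psi_0 \circ F = R^{n-1}$ for the first, and the multiplicativity of Teichmüller lifts together with $F([a]) = [a^p]$ and the $W_n\sO_X$-linearity of $\psi_0$ for the second --- show that $\psi|_A = \mathrm{id}$ and that $\psi(ay) = a\,\psi(y)$ for all $a \in A$ and $y \in A'$. Now let $\mathfrak{c} = (A :_A A')$ be the conductor; as $A'$ is birational over $A$, $\mathfrak{c}$ is contained in no minimal prime of $A$, hence in no minimal prime of $A'$, so by prime avoidance it contains a nonzerodivisor $c$ of the reduced ring $A'$. For $s \in A'$ we then have $cs \in A$, so $cs = \psi(cs) = c\,\psi(s)$ by the two properties of $\psi$, and therefore $s = \psi(s) \in A$. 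Thus $A' = A$.

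The step we expect to require the most care is the verification of the compatibility $\psi(ay) = a\,\psi(y)$ and of the $W_n\sO_X$-linearity of $\psi_0$; both rely on the precise description of the $\sO_X$-module structure on $Q_{X,n}$ recorded in \cite[Proposition~2.9]{KTTWYY:qFs1} (in particular on the fact that the $W_n\sO_X$-action on $Q_{X,n}$ factors through $R^{n-1}$). Granting that bookkeeping, the remainder is the standard conductor argument, the one genuinely new ingredient being the observation that additivity of $\psi$ is never invoked.
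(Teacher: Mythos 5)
Your proof is correct, and at its core it is the same argument as the paper's: both hinge on feeding the Teichm\"uller lift $[b^p]\in F_*W_n A$ (for $b$ in the total quotient ring with $b^p\in A$) into the retraction coming from the quasi-$F$-splitting and concluding that the output, which visibly lies in $A$, equals $b$. You differ from the paper in two verification steps. First, to see that the retraction returns $b$, the paper tensors the whole pushout diagram with $W_nK$ and reads off $(\alpha\otimes 1)(\overline{F_*[b^p]})=b$ there, whereas you stay inside $A$ and clear denominators: $c s=\psi(cs)=c\,\psi(s)$ for a nonzerodivisor $c$, using $\psi|_A=\mathrm{id}$ and the $A$-linearity of $\psi$ (your conductor/prime-avoidance setup is even more than needed, since any single denominator of $s$ already does the job). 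Second, the paper simply cites Reid--Roberts--Singh for the criterion ``$b^p\in A\Rightarrow b\in A$ implies weak normality in characteristic $p$,'' while you re-derive it from the fact that the weak normalization is a finite universal homeomorphism, hence killed by a power of Frobenius relative to $A$. Both substitutions are sound; your version is more self-contained, at the cost of having to verify the $W_n\sO_X$-linearity bookkeeping for $\psi_0$ explicitly, which you do correctly via the factorization of the $W_n\sO_X$-action on $Q_{X,n}$ through $R^{n-1}$.
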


See \cite{GrecoTraverso80,ReidRobertsSingh96:weaksubintegrality,AndreottiBombieri69} for the definition of seminormality and weak normality (see also~\cite[Section~3]{Schwede09:DB}).

\begin{proof}
  We may assume that $X$ is an affine scheme.
  Write $X = \Spec A$ and let $K$ be the total quotient ring of $A$.
  Let $b \in K$, and assume $b^p \in A$.
  By \cite[Theorem 4.3 and 6.8]{ReidRobertsSingh96:weaksubintegrality}, it suffices to show that $b \in A$.
  Since $A$ is quasi-$F$-split, a splitting $\alpha$ of $\Phi_{A, n}$ exists.
  We have the pushout diagram
  \[
    \begin{tikzcd}[row sep = small]
      W_nA \arrow{r}{F} \arrow[d, "R^{n-1}"'] & F_* W_n A  \arrow{d} \ar[r, phantom, "\ni"] &[-.5cm] F\push [b^p] \ar[d, maps to] \\
      A \arrow{r}{\Phi_{A, n}}
      & \arrow[lu, phantom, "\ulcorner" , very near start, yshift=0em, xshift=0.6em] Q_{A, n} \ar[r, phantom, "\ni"] \ar[l, bend left, "\alpha"]
      & \overline{F\push [b^p]}
    \end{tikzcd}\text{.}
  \]
  Hence, by tensoring it with $W_nK$, we obtain the diagram
  \[
    \begin{tikzcd}[row sep = small]
      &[-.5cm] W_nK \arrow{r}{F} \arrow[d, "R^{n-1}"'] & F_* W_n K  \arrow{d}
      &[+.5cm] [b] \ar[d, maps to] \ar[r, maps to] & F\push [b^p] \ar[d, maps to] \\
      & K \arrow{r}{\Phi_{K, n}}
      & \arrow[lu, phantom, "\ulcorner" , very near start, yshift=0em, xshift=0.6em] Q_{K, n} \ar[l, bend left, "\alpha \otimes 1"]
      & b \ar[r, maps to, dashed] & \overline{F\push [b^p]}
    \end{tikzcd}
  \]
  by~\cite[(1.5.3)]{Illusie79:deRhamWitt}.
  It follows that $(\alpha \otimes 1)(\overline{F\push [b^p]}) = b$.
  Therefore, we obtain $b = \alpha(\overline{F\push [b^p]}) \in A$, as desired.
\end{proof}

We move on to the mixed characteristic case.
We restrict ourselves to the normal case.
\begin{definition}
Let $M$ be a normal scheme that is flat over $\Spec \ZZ_{(p)}$.
We define a module $Q_{M,n}$ and a map $\wt{\Phi_{M,n}}$ by the following pushout diagram of $W_{n+1}\sO_M$-modules:
\[
  \begin{tikzcd}
    \W_{n+1}\sO_M \arrow{r}{F} \arrow[two heads]{d}{R^{n}} & F\push \W_n \sO_M  \arrow[two heads]{d}\\
    \sO_M \arrow{r}{\wt{\Phi_{M,n}}}& \arrow[lu, phantom, "\ulcorner" , very near start, xshift=0.6em] Q_{M, n}
  \end{tikzcd}\text{.}
\]
Note that $\wt{\Phi_{M, n}} \colon \sO_M \to Q_{M, n}$ is an $\sO_M$-module homomorphism.
\end{definition}

\begin{proposition}[{\cite[Proposition~3.2]{OnukiTakamatsuYoshikawa25v2}, cf.~\cite[Proposition 2.9]{KTTWYY:qFs1}}]
Let $M$ be a normal scheme that is flat over $\Spec \ZZ_{(p)}$.
Then we have the following two exact sequences for every $m, n \in \ZZ$:
\begin{gather}
  0 \to F_*W_n\sO_M \xrightarrow{\cdot p} F_*W_n\sO_M \to Q_{M,n} \to 0, \label{eq:exact-p}\\
  0 \to F^n_*Q_{M,m} \xrightarrow{V^n} Q_{M,n+m} \xrightarrow{R^m} Q_{M,n} \to 0. \label{eq:exact-V-R}
\end{gather}
\end{proposition}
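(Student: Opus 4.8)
The plan is to deduce both sequences from the fundamental short exact sequences of truncated Witt vectors, the key step being the identification $Q_{M,n} \isom F\push\bigl(\W_n\sO_M / p\,\W_n\sO_M\bigr)$. The only input from the hypotheses on $M$ is that each $\W_n\sO_M$ is $p$-torsion-free: since $M$ is flat over $\Spec\ZZ_{(p)}$, the sheaf $\sO_M$ is $p$-torsion-free, so locally $A = \sO_M(U)$ is a $p$-torsion-free ring, and then the ghost map $\W_n(A) \to \prod_{i=0}^{n-1} A$ is injective with $p$-torsion-free target. (Alternatively one can induct on $n$ using the exact sequence $0 \to F\push\W_{n-1}\sO_M \xrightarrow{V} \W_n\sO_M \xrightarrow{R^{n-1}} \sO_M \to 0$, both of whose maps commute with multiplication by $p$.)

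To obtain \eqref{eq:exact-p}, I would use that $R^n \colon \W_{n+1}\sO_M \to \sO_M$ is surjective with kernel $V\W_n\sO_M$, so that the pushout defining $Q_{M,n}$ is the cokernel of the composite $V\W_n\sO_M \hookrightarrow \W_{n+1}\sO_M \xrightarrow{F} F\push\W_n\sO_M$. Since $F\circ V$ is multiplication by $p$ on $\W_n\sO_M$, this composite has image $p\,\W_n\sO_M$, so $Q_{M,n} \isom F\push\W_n\sO_M / p\,\W_n\sO_M$, with the structural map $F\push\W_n\sO_M \to Q_{M,n}$ being the quotient. As $p$ is injective on $\W_n\sO_M$, this sequence is exact on the left as well, giving \eqref{eq:exact-p} (the functor $F\push$ plays no role in the exactness).

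For \eqref{eq:exact-V-R}, I would apply $-\otimes_{\ZZ}\ZZ/p\ZZ$ to the fundamental exact sequence $0 \to F^n\push\W_m\sO_M \xrightarrow{V^n} \W_{n+m}\sO_M \xrightarrow{R^m} \W_n\sO_M \to 0$. Exactness is preserved under this reduction because $\mathrm{Tor}_1^{\ZZ}(\W_n\sO_M, \ZZ/p\ZZ) = \W_n\sO_M[p] = 0$ by the first step; applying the exact functor $F\push$ and the identification of the previous paragraph then produces \eqref{eq:exact-V-R}.

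I expect the main obstacle to lie not in these homological steps but in the accompanying bookkeeping: one must check that, under the identification $Q_{M,\bullet} \isom F\push(\W_\bullet\sO_M/p)$, the transition maps $V^n$ and $R^m$ on the family $(Q_{M,\bullet})$ correspond to the Witt-vector operators $V^n$ and $R^m$ reduced modulo $p$. This is a routine but slightly delicate comparison of the several pushout squares involved, and it is the only point where genuine care is needed; the $p$-torsion-freeness is handled cleanly and locally by the ghost map.
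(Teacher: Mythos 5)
Your argument is correct: the identification $Q_{M,n}\isom F\push(\W_n\sO_M/p\W_n\sO_M)$ via $FV=p$ and the kernel $V\W_n\sO_M$ of $R^n$, the $p$-torsion-freeness of $\W_n\sO_M$ from the ghost map, and the reduction mod $p$ of the sequence $0\to F^n\push\W_m\sO_M\xrightarrow{V^n}\W_{n+m}\sO_M\xrightarrow{R^m}\W_n\sO_M\to 0$ together give both exact sequences, and the compatibility of the induced maps with $V^n$ and $R^m$ is immediate since these operators are additive and hence descend modulo $p$. The paper itself gives no proof, deferring to \cite[Proposition~3.2]{OnukiTakamatsuYoshikawa25v2}, and your route is the standard one used there and in the positive-characteristic analogue \cite[Proposition~2.9]{KTTWYY:qFs1}.
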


\begin{definition}
Let $M$ be a normal scheme that is flat over $\Spec \ZZ_{(p)}$.
By \eqref{eq:exact-p}, the map $\wt{\Phi_{M,n}}$ induces an $\sO_{\modp{M}}$-module homomorphism $\Phi_{M,n} \colon \sO_{\modp{M}} \to Q_{M,n}$.
\end{definition}

\begin{definition}[]
\label{def:mix-quasi-F-split}
Let $M$ be a normal Noetherian scheme flat over $\Spec \ZZ_{(p)}$ such that $\modp{M}$ is $F$-finite.
\begin{itemize}
    \item $M$ is said to be \emph{$n$-quasi-$F$-split} if the map $\Phi_{M, n}$ splits as an $\sO_{\modp{M}}$-module homomorphism.
    \item $M$ is said to be \emph{quasi-$F$-split} if it is $n$-quasi-$F$-split for some $n \ge 1$.
  \end{itemize}
\end{definition}

\begin{lemma}[{\cite[Proposition~4.6]{Yoshikawa25}}]\label{lem:mod-p-qFs}
  Let $M$ be a normal Noetherian scheme flat over $\Spec \ZZ_{(p)}$ such that $\modp{M}$ is $F$-finite.
  If $\modp{M}$ is quasi-$F$-split, then so is $M$.
\end{lemma}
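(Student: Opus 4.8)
The plan is to compare, via reduction modulo $p$, the mixed-characteristic pushout square defining $Q_{M,n}$ with the positive-characteristic pushout square defining $Q_{\modp{M},n}$, and then to transport a splitting of $\Phi_{\modp{M},n}$ across this comparison. Write $X_0 \coloneqq \modp{M}$. Since $M$ is flat over $\Spec \ZZ_{(p)}$, multiplication by $p$ is injective on $\sO_M$ and $\sO_{X_0} = \sO_M/p\sO_M$; moreover the hypothesis that $X_0$ is quasi-$F$-split forces $X_0$ to be reduced (indeed weakly normal, by \myref{lem:qFs-implies-weakly-normal}), so that $X_0$ is a reduced $F$-finite scheme of characteristic $p$ and the positive-characteristic construction of $Q_{X_0,n}$ and $\Phi_{X_0,n}$ applies to it.

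The core step is to produce, for each $n \ge 1$, a morphism from the mixed-characteristic pushout square defining $Q_{M,n}$ to the positive-characteristic pushout square defining $Q_{X_0,n}$, all of whose comparison maps are induced by reduction modulo $p$. Concretely, one needs a commutative ladder of spans from
\[
  F\push\W_n\sO_M \xleftarrow{F} \W_{n+1}\sO_M \xrightarrow{R^n} \sO_M
\]
to
\[
  F\push\W_n\sO_{X_0} \xleftarrow{F} \W_n\sO_{X_0} \xrightarrow{R^{n-1}} \sO_{X_0},
\]
in which the outer vertical arrows are reduction modulo $p$ and the middle one is the composite $\W_{n+1}\sO_M \to \W_{n+1}\sO_{X_0} \xrightarrow{R} \W_n\sO_{X_0}$. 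The commutativity of the two resulting squares comes down to the functoriality of $F$ and $R$ and of reduction modulo $p$, together with the classical fact that over a ring of characteristic $p$ the Witt-vector Frobenius $F \colon \W_{n+1} \to \W_n$ is the truncated Frobenius $(a_0,\dots,a_n) \mapsto (a_0^p,\dots,a_{n-1}^p)$, equivalently $\W_n(\mathrm{Frob})$ precomposed with the restriction $R \colon \W_{n+1} \to \W_n$; this identity is exactly what reconciles the ``$F$'' appearing in the mixed-characteristic pushout with the ``$F$'' appearing in the positive-characteristic pushout on $X_0$. Passing to pushouts (over $\W_{n+1}\sO_M$) then yields a map $\theta_n \colon Q_{M,n} \to Q_{X_0,n}$ compatible with the corner maps. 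Since $Q_{M,n}$ is annihilated by $p$ by~\eqref{eq:exact-p}, hence an $\sO_{X_0}$-module as in the definition of $\Phi_{M,n}$, and $Q_{X_0,n}$ is an $\sO_{X_0}$-module by its construction, while $\sO_M \to \sO_{X_0}$ is surjective, one deduces that $\theta_n$ is $\sO_{X_0}$-linear and that $\theta_n \circ \Phi_{M,n} = \Phi_{X_0,n}$ as $\sO_{X_0}$-module homomorphisms $\sO_{X_0} \to Q_{X_0,n}$.

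Granting the map $\theta_n$, the conclusion is formal: if $X_0$ is $n$-quasi-$F$-split and $\beta \colon Q_{X_0,n} \to \sO_{X_0}$ is an $\sO_{X_0}$-linear splitting of $\Phi_{X_0,n}$, then $\beta \circ \theta_n$ is $\sO_{X_0}$-linear and satisfies $(\beta \circ \theta_n) \circ \Phi_{M,n} = \beta \circ \Phi_{X_0,n} = \mathrm{id}$, so $\beta \circ \theta_n$ splits $\Phi_{M,n}$; hence $M$ is $n$-quasi-$F$-split (with the same $n$), in particular quasi-$F$-split. I expect the one genuinely non-formal point to be the construction of $\theta_n$ — verifying that reduction modulo $p$ truly defines a morphism of pushout squares — which uses the characteristic-$p$ description of the Witt-vector Frobenius recalled above and requires keeping careful track of the $\W_{n+1}\sO_M$-module structures on all six terms, so that $\theta_n$ comes out $\sO_{X_0}$-linear rather than merely additive. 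Everything after that is just the universal property of pushouts.
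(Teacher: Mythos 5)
Your proposal is correct and follows essentially the same route as the paper: the paper's proof consists precisely of observing that reduction modulo $p$ induces a map $Q_{M,n} \to Q_{\modp{M},n}$ fitting into a commutative triangle with $\Phi_{M,n}$ and $\Phi_{\modp{M},n}$, and then composing a splitting of the latter with this map. You have simply spelled out the Witt-vector functoriality underlying that one-line argument.
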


\begin{proof}
  The natural map $\sO_M \to \sO_{\modp{M}}$ induces the diagram
  \[
    \begin{tikzcd}[row sep = small]
      \sO_{\modp{M}} \ar[r] \ar[rd] & Q_{M,n} \ar[d] \\
      & Q_{\modp{M}, n}
    \end{tikzcd}\text{.}
  \]
  The assertion follows from this diagram.
\end{proof}

\subsection{Global $F$-splitting via normalization}
Miller and Schwede gave a criterion for the $F$-purity of nonnormal rings via normalization \cite[Corollary 4.3]{MillerSchwede12}.
We present a global variant (\myref{prop:MillerSchwede-global}).
This may be known to the experts but the author was not able to find a suitable reference.

Recall that a Noetherian scheme $X$ is said to be \emph{\SerreG{1}} if it is Gorenstein at codimension $\le 1$ points.
See \cite{Hartshorne94:AC} for the definition of \emph{almost Cartier (AC) divisors}.

\begin{proposition}[{\cite[Theorem 2.4 and Remark 2.5]{MillerSchwede12}}]\label{prop:correspondence-divisor-splitting}
  Let $X$ be a reduced \SerreG{1} \SerreS{2} projective scheme over an $F$-finite field of characteristic $p > 0$. Then there is a natural bijection between the following two sets:
  \begin{gather*}
    \setbraces*{\text{effective $\ZZ_{(p)}$-AC divisors $\Delta$ such that $\Kk_X + \Delta$ is $\ZZ_{(p)}$-Cartier}} \\
    \updownarrow \\
    \setbraces*{
      \begin{gathered}
        \text{triples $(e > 0,\, \sL,\, \varphi \colon F^e\push \sL \to \sO_X)$ such that $\sL$ is a line bundle on $X$} \\
        \text{and $\varphi$ is nonzero at every generic point of $X$}
      \end{gathered}
    } \Bigg/ {\sim}\text{.}
  \end{gather*}
  Moreover, suppose that $\varphi \colon F^e\push \sL \to \sO_X$ corresponds to $\Delta \coloneqq 0$ by this bijection; then, the map $\varphi$ is surjective if and only if $X$ is globally $F$-split.
\end{proposition}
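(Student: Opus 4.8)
The plan is to reduce the assertion to its codimension-one content --- where it becomes the classical correspondence between Frobenius actions and divisors --- and then to transport it back to all of $X$ by means of the theory of almost Cartier divisors. Let $\iota \colon U \hookrightarrow X$ be the inclusion of the Gorenstein locus. Since $X$ is \SerreG{1}, the complement $X \setminus U$ has codimension $\ge 2$, and since $X$ is \SerreS{2}, the sheaf $\omega_X$, the rank-one \SerreS{2} sheaves underlying almost Cartier divisors, and every sheaf of the form $\mathcal{H}om_{\sO_X}(\sF, \sO_X)$ with $\sF$ coherent are all \SerreS{2}, hence equal to $\iota\push$ of their restrictions to $U$; in particular their global sections over $X$ and over $U$ agree. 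Thus a triple $(e, \sL, \varphi)$, an effective $\ZZ_{(p)}$-AC divisor, and the linear equivalences entering the bijection can all be detected on $U$. It therefore suffices (i) to build the correspondence over $U$, and (ii) to check that the two conditions that genuinely refer to $X$, namely that $\sL$ be a line bundle on $X$ and that $\Kk_X + \Delta$ be $\ZZ_{(p)}$-Cartier on $X$, match up.

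Over $U$ I would run the usual normal-variety argument. Here $\omega_U$ is a line bundle, $F^e \colon U \to U$ is finite because $X$ is $F$-finite, and Grothendieck--Serre duality for $F^e$ together with the projection formula (using $F^{e*}\omega_U \isom \omega_U\powotimes{p^e}$ and $(F^e)^!\omega_U \isom \omega_U$) gives a natural isomorphism
\[
  \mathcal{H}om_{\sO_U}\bigl((F^e\push\sL)|_U,\ \sO_U\bigr) \;\isom\; F^e\push\bigl(\sL^{-1}|_U \otimes \omega_U\powotimes{(1-p^e)}\bigr)\text{.}
\]
Under this identification $\varphi$ becomes a global section $s$ of $\sL^{-1}|_U \otimes \omega_U\powotimes{(1-p^e)}$; the requirement that $\varphi$ be nonzero at every generic point says exactly that $\operatorname{div}(s)$ is a well-defined effective divisor, and one sets $\Delta|_U \coloneqq \tfrac{1}{p^e-1}\operatorname{div}(s)$. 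Then $(p^e-1)(\Kk_U+\Delta|_U)$ has divisor class $(p^e-1)\Kk_U + \bigl[\sL^{-1}|_U\otimes\omega_U\powotimes{(1-p^e)}\bigr] = \bigl[\sL^{-1}|_U\bigr]$, which is Cartier; since $\sL^{-1}$ is a line bundle on all of $X$ and is its own \SerreS{2}-hull, the same computation shows that $(p^e-1)(\Kk_X+\Delta)$ is Cartier on $X$, so $\Delta$ indeed lies in the left-hand set. Conversely, given $\Delta$ with $(p^e-1)(\Kk_X+\Delta)$ Cartier on $X$ one puts $\sL \coloneqq \sO_X\bigl(-(p^e-1)(\Kk_X+\Delta)\bigr)$, which is the step that uses the \emph{global} Cartier hypothesis, and reverses the construction with $s$ the tautological section. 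A direct check against the iterated-Frobenius operations shows that $\operatorname{div}(s)$, and hence $\Delta$, is an invariant of the equivalence class, so the two assignments descend to $\sim$ and are mutually inverse; this produces the bijection.

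For the final assertion I would observe that $\Delta = 0$ forces $\Kk_X$ to be $\ZZ_{(p)}$-Cartier and $s$ to be a nowhere-vanishing section of $\sL^{-1}\otimes\omega_X\powotimes{(1-p^e)} \isom \sO_X$, so that under the duality isomorphism $\varphi$ is, up to this trivialization, the Grothendieck trace $F^e\push\omega_X \to \omega_X$, equivalently the Grothendieck dual of the $e$-th iterated Frobenius $\sO_X \to F^e\push\sO_X$. Since $X$ is globally $F$-split if and only if $\sO_X \to F^e\push\sO_X$ splits for one (equivalently every) $e \ge 1$, and this splitting is detected by the surjectivity of the dual trace, the equivalence of the surjectivity of $\varphi$ with the global $F$-splitting of $X$ follows as in \cite[Remark~2.5]{MillerSchwede12}. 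The step I expect to be the main obstacle is coping with the fact that $X$ is only \SerreS{2}, not Cohen--Macaulay: Grothendieck duality then yields a dualizing \emph{complex} rather than a dualizing sheaf, so one must check that the displayed $\mathcal{H}om$-identification and the trace criterion for splitting survive at the level of the single cohomology sheaf in play. Passing to the Gorenstein locus $U$ is precisely the device that circumvents this, the remaining cost being the bookkeeping needed to see that the Miller--Schwede equivalence relation $\sim$ corresponds exactly to equality of $\Delta$.
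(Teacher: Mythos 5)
Your construction of the bijection---restrict to the Gorenstein locus $U$ (whose complement has codimension $\ge 2$ by \SerreG{1}), apply duality for the finite morphism $F^e$ there, and transport back using that almost Cartier divisors and \SerreS{2} sheaves are determined by their restriction to $U$---is the standard argument, and it is essentially what the paper's two citations (Schwede's normal case and Miller--Schwede's affine semi-local case) unpack to; the paper itself offers only the instruction to combine those two results, so on this part you are, if anything, more self-contained. One step you should pin down rather than assert: that an almost Cartier divisor $D$ with $\sO_X(D)$ invertible is actually Cartier (this is needed for ``$(p^e-1)(\Kk_X+\Delta)$ is Cartier on $X$''); it is true in Hartshorne's theory of generalized divisors on a \SerreG{1}, \SerreS{2} scheme, but it is not a tautology.

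The genuine gap is in the ``Moreover''. You justify ``$\varphi$ surjective $\Leftrightarrow$ $X$ globally $F$-split'' by saying the splitting of $\sO_X \to F^e\push\sO_X$ ``is detected by the surjectivity of the dual trace'' and citing the local Remark~2.5 of Miller--Schwede. But surjectivity of a map of sheaves is a local condition, while global $F$-splitting is not: sheaf-level surjectivity of the trace is equivalent only to $F$-purity of $X$ at every point. A supersingular elliptic curve (smooth, hence the trace is surjective as a sheaf map, yet not globally $F$-split) shows the two conditions are inequivalent for a general reduced projective scheme, so no appeal to the local statement can close this. What is actually needed---and what the downstream application in \myref{prop:MillerSchwede-global} uses---is the criterion at the level of global sections: since $\varphi$ corresponds to a generator of $\mathcal{H}om_{\sO_X}(F^e\push\sO_X,\sO_X)$ as an $F^e\push\sO_X$-module, every $\psi \colon F^e\push\sO_X \to \sO_X$ has the form $\psi(-) = \varphi(c \cdot -)$ for a global section $c$ of $F^e\push\sL$, and $\psi$ splits Frobenius exactly when $\varphi(c) = \psi(1) = 1$; hence $X$ is globally $F$-split if and only if $1$ lies in the image of $\Hh^0(\varphi) \colon \Hh^0(X, F^e\push\sL) \to \Hh^0(X, \sO_X)$. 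You should prove this $\Hh^0$-level statement (and read ``surjective'' in the proposition accordingly); the paper's one-line proof does not address this local-to-global point either, so this is precisely where your write-up needs to go beyond both Remark~2.5 and the paper's own citation.
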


\begin{proof}
  When $X$ is normal, this is well known~\cite[Remark 9.5]{Schwede09:F-adjunction}.
  When $X = \Spec A$ is affine and $A$ is semi-local, the assertion is \cite[Theorem 2.4]{MillerSchwede12}.
  The general statement follows by combining these two arguments.
\end{proof}

\begin{proposition}[{see \cite[Corollary 4.3]{MillerSchwede12}}]\label{prop:MillerSchwede-global}
  Let $X$ be a $\ZZ_{(p)}$-Gorenstein reduced \SerreG{1} \SerreS{2} projective scheme over an $F$-finite field of characteristic $p > 0$.
  Assume that $X$ has hereditary surjective trace (see \myref{rmk:hereditary-surjective-trace} below).
  Let $\nu \colon Y \to X$ be the normalization of $X$.
  Then $X$ is globally $F$-split if and only if $(Y, \BY)$ is globally $F$-split.
\end{proposition}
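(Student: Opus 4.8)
The plan is to express each of the two global $F$-splitting conditions, via the divisor/Frobenius-splitting correspondence of \myref{prop:correspondence-divisor-splitting}, as the surjectivity of a canonically determined sheaf map, and then to compare the two maps by pushing forward along the normalization $\nu$ and invoking the local comparison of Miller--Schwede.

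Since $X$ is $\ZZ_{(p)}$-Gorenstein, the zero divisor $\Delta = 0$ is admissible on $X$, so \myref{prop:correspondence-divisor-splitting} provides a triple $(e, \sL_X, \varphi_X \colon F^e\push \sL_X \to \sO_X)$ corresponding to $\Delta = 0$, with the property that $X$ is globally $F$-split if and only if $\varphi_X$ is surjective. On the normalization side, since $X$ is reduced, \SerreS{2} and \SerreG{1}, the conductor ideal $\conductorideal$ defines the effective Weil divisor $\BY$ on $Y$, and the conductor adjunction yields $\Kk_Y + \BY = \nu\pull \Kk_X$ as $\ZZ_{(p)}$-Cartier divisor classes; hence $\BY$ is an effective $\ZZ_{(p)}$-AC divisor and $\Kk_Y + \BY$ is $\ZZ_{(p)}$-Cartier, so \myref{prop:correspondence-divisor-splitting} applies to $Y$ as well. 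After enlarging $e$ to a common value, fix a triple $(e, \sL_Y, \varphi_Y \colon F^e\push \sL_Y \to \sO_Y)$ corresponding to $\Delta = \BY$; then, by definition of global $F$-splitting of a pair, $(Y, \BY)$ is globally $F$-split if and only if $\varphi_Y$ is surjective. It thus suffices to prove that $\varphi_X$ is surjective if and only if $\varphi_Y$ is surjective.

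The heart of the matter is to relate $\varphi_X$ to $\varphi_Y$ across $\nu$. Using the trace map of the finite morphism $\nu$ — which the hereditary surjective trace hypothesis (see \myref{rmk:hereditary-surjective-trace}) makes surjective at every point of $X$ — together with Grothendieck duality for $\nu$, the identification of $\conductorideal$ with $\mathcal{H}om_{\sO_X}(\nu\push \sO_Y, \sO_X)$, and the conductor adjunction, one constructs from $\varphi_Y$ a map $F^e\push \sL_X \to \sO_X$ which agrees with $\varphi_X$ up to a global unit and whose surjectivity is equivalent to that of $\varphi_Y$. Since surjectivity of a morphism of coherent sheaves is checked stalkwise, the problem reduces to the pointwise assertion: for each $x \in X$ with fiber $\nu^{-1}(x) = \{y_1, \dots, y_s\}$, the map $\varphi_X$ is surjective at $x$ if and only if $\varphi_Y$ is surjective at every $y_j$. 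Localizing at $x$ (and using that the statement is unaffected by completion), this is exactly the local comparison of Miller--Schwede~\cite[Corollary~4.3]{MillerSchwede12}, whose hypotheses hold because $\sO_{X,x}$ is reduced, $\ZZ_{(p)}$-Gorenstein, \SerreG{1} and \SerreS{2}, and because $X$ has hereditary surjective trace.

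The step I expect to be most delicate is this middle one: the triples $(\sL_X, \varphi_X)$ and $(\sL_Y, \varphi_Y)$ live on different schemes, and $\nu\push \sL_Y$ differs from $\sL_X$ by a twist governed by the conductor and by $\BY$, so tracking the line bundles and pushing the twist by $\BY$ through duality for $\nu$ must be done carefully; one must also verify that hereditary surjective trace is precisely the hypothesis ensuring the map built from $\varphi_Y$ recovers $\varphi_X$ globally, not merely at the generic points of $X$. Once the two maps are matched, descending the surjectivity comparison to the local theorem of Miller--Schwede is routine.
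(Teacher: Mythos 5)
Your proposal is correct and follows essentially the same route as the paper: translate both splitting conditions into surjectivity of maps $F^e_*\sL \to \sO$ via \myref{prop:correspondence-divisor-splitting}, match the two maps across $\nu$, and invoke the local Miller--Schwede comparison (the paper cites \cite[Lemma 4.2 and Theorem 3.10]{MillerSchwede12}, using that surjectivity is a stalkwise condition). The only cosmetic difference is the direction of the matching step: the paper extends $\varphi_X$ uniquely to $Y$ by \cite[Proposition 7.11]{Schwede10:Centers} and then identifies its divisor as $\BY$, which sidesteps the ``up to a global unit'' bookkeeping you flag as delicate in your descent from $\varphi_Y$.
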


\begin{remark}\label{rmk:hereditary-surjective-trace}
  In \myref{prop:MillerSchwede-global}, we assumed that $X$ has hereditary surjective trace.
  We only give a sufficient condition for it since we do not need the full generality of the definition in this paper.
  See \cite[Definition 3.5]{MillerSchwede12} for the general definition.
  Let
  \begin{equation*}
    \begin{tikzcd}
      \BY \ar[d, "\nu\restrict{\BY}"'] \ar[r, hook] & Y \ar[d, "\nu"] \\
      C \ar[r, hook] & X
    \end{tikzcd}
  \end{equation*}
  be the conductor square, and let $C\redn = \bigcup_\lambda C_\lambda$ be the decomposition into irreducible components.
  Suppose that all $C_\lambda$ are normal, and that $\Tr \colon \nu\push \sO_{\BY_\lambda\normaln} \to \sO_{C_\lambda}$ is surjective for every component $\BY_\lambda$ of $\BY\redn$ dominating $C_\lambda$.
  Then $X$ has hereditary surjective trace.
\end{remark}

\begin{proof}
  We note that $X$ is seminormal if $(Y, \BY)$ is globally $F$-split by \cite[Corollary 2.7 (i) and (vii)]{GrecoTraverso80}.
  Let $\varphi \colon F^e\push \sL \to \sO_X$ be the map that corresponds to $\Delta \coloneqq 0$ by \myref{prop:correspondence-divisor-splitting}.
  From \cite[Proposition 7.11]{Schwede10:Centers}, we deduce that $\varphi$ extends uniquely to a map $\varphi\normaln \colon F^e\push \sL_Y \to \sO_Y$, where $\sL_Y$ is the pullback of $\sL$ to $Y$.
  Let $\Delta_{\varphi\normaln}$ be the divisor that corresponds to $\varphi\normaln$ by \myref{prop:correspondence-divisor-splitting} (or~\cite[Remark 9.5]{Schwede09:F-adjunction}).
  By \cite[Lemma 4.2]{MillerSchwede12}, we obtain $\Delta_{\varphi\normaln} = \BY$. Note that although \cite[Lemma 4.2]{MillerSchwede12} is a statement for affine schemes, it can be extended to our situation by checking locally.
  Hence, $(Y,\BY)$ is globally $F$-split if and only if $\varphi\normaln$ is surjective. Also, $X$ is globally $F$-split if and only if $\varphi$ is surjective.
  We conclude from \cite[Theorem 3.10]{MillerSchwede12} that these two are equivalent, by noting that surjectivity can be checked locally.
\end{proof}

\section{Quasi-\texorpdfstring{$F$}{F}-splitting and global \texorpdfstring{$\bmplus$}{+}-regularity}\label{sec:qFs-gpr}
We show a sufficient condition for global $\bmplus$-regularity using quasi-$F$-splitting (\myref{thm:qFs-implies-gpr}).
\begin{notation}\label{nota:qfs-gpr}
  Let $(R,\frakm)$ be a complete DVR with $p \in \frakm$ and a normalized dualizing complex $\omega_R^\bullet$.
  Let $M$ be a $(d+1)$-dimensional integral normal scheme that is projective over $\Spec R$ for $d \geq 1$.
  We assume that $M$ is $p$-torsion free and $\modp{M}$ is $F$-finite.
  In this section, we write $\overline{M} \coloneqq \modp{M}$, and similarly for divisors and sheaves.
\end{notation}

\begin{lemma}\label{lem:nonnormal-Serre-cond}
  Let the notation be as in \myref{nota:qfs-gpr}.
  Let $\sL$ be a line bundle on $M$, and $i \in \ZZ$.
  If $\Hh^i_\frakm(\overline{M}, \sL\powotimes{p^e}|_{\overline{M}})=0$ for every $e \geq 1$, then 
  \[
    \Hh^i_\frakm(\overline{M}, Q_{M,n} \otimes \sL\powotimes{p^e})=0 \quad \text{for all } e \geq 0 \text{ and } n \geq 1.
  \]
\end{lemma}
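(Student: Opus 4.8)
The plan is to prove the statement by induction on $n$, reducing everything to the hypothesis on $\sL\powotimes{p^e}|_{\overline{M}}$ by means of the exact sequences \eqref{eq:exact-p} and \eqref{eq:exact-V-R} together with the projection formula. First I would record two bookkeeping facts. Since $Q_{M,n}$ is an $\sO_{\overline{M}}$-module, the twist $Q_{M,n}\otimes\sL\powotimes{p^e}$ equals $Q_{M,n}\otimes_{\sO_{\overline{M}}}(\sL|_{\overline{M}})\powotimes{p^e}$, and tensoring by an invertible $\sO_{\overline{M}}$-module is exact; hence \eqref{eq:exact-p} and \eqref{eq:exact-V-R} remain exact after the twist. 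Next, let $F$ be the absolute Frobenius of $\overline{M}$, which is finite since $\modp{M}$ is $F$-finite. Because $\frakm$ acts nilpotently on $\overline{M}$ (it is killed by $p$, and a power of a uniformizer of $R$ lies in $pR$), $\Hh^i_\frakm(\overline{M},-)$ agrees with ordinary cohomology $\Hh^i(\overline{M},-)$, and the latter is unchanged by the affine pushforward $F\push$; so $\Hh^i_\frakm(\overline{M},F\push\sF)\isom\Hh^i_\frakm(\overline{M},\sF)$ for every coherent $\sO_{\overline{M}}$-module $\sF$. Combined with the projection formula $F\push\sF\otimes_{\sO_{\overline{M}}}(\sL|_{\overline{M}})\powotimes{p^e}\isom F\push\bigl(\sF\otimes_{\sO_{\overline{M}}}(\sL|_{\overline{M}})\powotimes{p^{e+1}}\bigr)$, this is what produces the shift $e\mapsto e+1$ that drives the induction.

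For the base case $n=1$, I would use \eqref{eq:exact-p} with $n=1$, namely $0\to F\push\sO_M\xrightarrow{\cdot p}F\push\sO_M\to Q_{M,1}\to 0$ (note $W_1\sO_M=\sO_M$). Tensoring with $\sL\powotimes{p^e}$ and using that $M$ is $p$-torsion free, together with the projection formula, identifies $Q_{M,1}\otimes\sL\powotimes{p^e}$ with $F\push\bigl(\sL\powotimes{p^{e+1}}|_{\overline{M}}\bigr)$. Hence $\Hh^i_\frakm(\overline{M},Q_{M,1}\otimes\sL\powotimes{p^e})\isom\Hh^i_\frakm(\overline{M},\sL\powotimes{p^{e+1}}|_{\overline{M}})$, which vanishes for every $e\ge 0$ by the hypothesis, since $e+1\ge 1$.

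For the inductive step, I would assume the conclusion for $Q_{M,n}$ (for all $e\ge 0$) and apply \eqref{eq:exact-V-R} in the form $0\to F\push Q_{M,n}\xrightarrow{V}Q_{M,n+1}\xrightarrow{R^n}Q_{M,1}\to 0$, twist it by $\sL\powotimes{p^e}$, and pass to the associated long exact sequence of local cohomology. The projection formula gives $\Hh^i_\frakm(\overline{M},F\push Q_{M,n}\otimes\sL\powotimes{p^e})\isom\Hh^i_\frakm(\overline{M},Q_{M,n}\otimes\sL\powotimes{p^{e+1}})$, which is $0$ by the inductive hypothesis applied with exponent $e+1\ge 0$; and $\Hh^i_\frakm(\overline{M},Q_{M,1}\otimes\sL\powotimes{p^e})=0$ by the base case. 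The long exact sequence then forces $\Hh^i_\frakm(\overline{M},Q_{M,n+1}\otimes\sL\powotimes{p^e})=0$, which closes the induction.

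The argument is mostly bookkeeping. The part I expect to need the most care is precisely the interplay between the Frobenius twist and the projection formula raising the exponent from $e$ to $e+1$, together with the observation that $\Hh^i_\frakm(\overline{M},-)$ is insensitive to pushforward along iterates of the absolute Frobenius of $\overline{M}$. Getting this right is what makes the induction close, and it is also the reason the hypothesis is needed only for $e\ge 1$ while the conclusion is asserted for all $e\ge 0$.
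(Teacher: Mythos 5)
Your argument is correct and is precisely the induction the paper intends: the paper's proof is only a pointer to an analogous result elsewhere, but the same manipulations (twisting the exact sequences \eqref{eq:exact-p} and \eqref{eq:exact-V-R} by a line bundle, applying the projection formula to shift $p^e$ to $p^{e+1}$ across $F\push$, and noting that $F\push$ does not change local cohomology on $\overline{M}$) appear verbatim in the proof of \myref{thm:qFs-implies-gpr}. Your identification $Q_{M,1}\isom F\push\sO_{\overline{M}}$ for the base case and the inductive use of $0\to F\push Q_{M,n}\to Q_{M,n+1}\to Q_{M,1}\to 0$ match that argument, so there is nothing to add.
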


\begin{proof}
  An argument analogous to \cite[Proposition~3.8]{OnukiTakamatsuYoshikawa25v2} applies in this case.
\end{proof}

\begin{proposition}\label{prop:nonnormal-dual-Kodaira-vanishing}
  Let the notation be as in \myref{nota:qfs-gpr}.
  Assume that $M$ is Cohen--Macaulay and quasi-$F$-split.
  Let $\sA$ be an ample line bundle on $M$.
  Then
  \[
    \Hh^j_\frakm(M,\sA^\inv) = 0
    \quad \text{for all } j<d+1.
  \]
\end{proposition}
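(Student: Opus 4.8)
The plan is to prove the vanishing $\Hh^j_\frakm(M, \sA^\inv) = 0$ for $j < d+1$ by reducing the question, via the defining pushout diagram of $Q_{M,n}$, to Kodaira-type vanishing on the closed fiber $\overline{M}$, where we may invoke the Serre-vanishing type input built into \myref{lem:nonnormal-Serre-cond}. More precisely, since $M$ is quasi-$F$-split, the map $\Phi_{M,n} \colon \sO_{\overline{M}} \to Q_{M,n}$ splits for some $n \ge 1$; twisting by $\sA^\inv$ and applying $\Hh^\bullet_\frakm(M, -)$, the group $\Hh^j_\frakm(\overline{M}, \sA^\inv|_{\overline{M}})$ becomes a direct summand of $\Hh^j_\frakm(M, Q_{M,n} \otimes \sA^\inv)$. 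So it suffices to show (i) $\Hh^j_\frakm(M, Q_{M,n} \otimes \sA^\inv) = 0$ for $j < d+1$, and (ii) that control of $\Hh^j_\frakm(\overline{M}, \sA^\inv|_{\overline{M}})$ and of its Frobenius-iterated twists $\sA^{\inv \otimes p^e}|_{\overline{M}}$ feeds back into the ambient vanishing on $M$.

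First I would establish the closed-fiber vanishing: since $M$ is Cohen--Macaulay and $\overline{M}$ is a Cartier divisor in $M$ (as $M$ is $p$-torsion free), $\overline{M}$ is Cohen--Macaulay of dimension $d$, and $\sA|_{\overline{M}}$ is ample. By Serre vanishing (applied after passing to the section ring, using local duality as in \cite[Proposition~3.8]{OnukiTakamatsuYoshikawa25v2}) or by the standard fact that $\Hh^j_\frakm$ of a negative-enough ample twist on a CM scheme vanishes below the top degree, one gets $\Hh^j_\frakm(\overline{M}, \sA^{\inv \otimes p^e}|_{\overline{M}}) = 0$ for all $j < d$ — but this is exactly where care is needed, since we need it for \emph{all} $e \ge 1$ simultaneously, not merely for $e \gg 0$. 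This is the point at which $\sA$ being ample (so that $\sA^{\inv\otimes p^e}$ is more and more negative) is essential: on a CM projective scheme over a field, $\Hh^j_\frakm(\overline{M}, \sN) = 0$ for $j < \dim \overline{M}$ whenever $\sN$ is any negative twist of an ample bundle, uniformly. Once this is in hand, \myref{lem:nonnormal-Serre-cond} with $\sL = \sA^\inv$ and $i = j < d$ yields $\Hh^j_\frakm(\overline{M}, Q_{M,n} \otimes \sA^{\inv \otimes p^e}) = 0$ for all $e \ge 0$, $n \ge 1$.

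Next, I would bootstrap from the closed fiber to $M$ itself. Using the exact sequence \eqref{eq:exact-p}, $0 \to F_*\W_n\sO_M \xrightarrow{\cdot p} F_*\W_n\sO_M \to Q_{M,n} \to 0$, twisted by $\sA^\inv$, together with the fact that $Q_{M,n}$ is supported on $\overline{M}$ (indeed it is an $\sO_{\overline{M}}$-module by \cite[Proposition 2.9]{KTTWYY:qFs1}), we can compare local cohomology of $M$ and of $\overline{M}$. Concretely, the splitting of $\Phi_{M,n}$ realizes $\sO_{\overline{M}}$ as a summand of $Q_{M,n}$ as $\sO_{\overline{M}}$-modules, so $\Hh^j_\frakm(M, \sA^\inv|_{\overline M}) = \Hh^j_\frakm(\overline{M}, \sA^\inv|_{\overline{M}})$ is a summand of $\Hh^j_\frakm(\overline{M}, Q_{M,n}\otimes \sA^\inv)$, which vanishes for $j < d$ by the previous paragraph. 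It then remains to pass from vanishing on $\overline M$ to vanishing on $M$ in the range $j < d+1$; this is done via the Cartier-divisor exact sequence $0 \to \sA^\inv(-\overline{M}) \to \sA^\inv \to \sA^\inv|_{\overline{M}} \to 0$ — note $\sA^\inv(-\overline{M}) \isom \sA^\inv$ since $\overline M$ is the divisor of $p$ and $M$ is $p$-torsion-free, so $\cdot p$ gives the identification — which collapses the long exact sequence into $\Hh^j_\frakm(M, \sA^\inv) \xrightarrow{\cdot p} \Hh^j_\frakm(M, \sA^\inv) \to \Hh^j_\frakm(\overline M, \sA^\inv|_{\overline M})$. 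Since $M$ is projective over a complete DVR $R$, the modules $\Hh^j_\frakm(M, \sA^\inv)$ are $p$-adically separated (they are, up to shift, Matlis duals of finitely generated $R$-modules by local duality on the CM scheme $M$), so surjectivity of multiplication by $p$ forces $\Hh^j_\frakm(M,\sA^\inv) = 0$ once we know $\Hh^j_\frakm(\overline M, \sA^\inv|_{\overline M})$ surjects onto $0$ — i.e.\ once the cokernel vanishes; combining with the injectivity coming from $p$-torsion-freeness of $M$ handles $j = d+1$ separately if needed, but for $j < d+1$ the Nakayama-type argument closes the proof.

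The main obstacle I anticipate is the \emph{uniform} Serre vanishing on $\overline M$ for all Frobenius powers $\sA^{\inv \otimes p^e}$ at once, which is needed to apply \myref{lem:nonnormal-Serre-cond}; one has to be careful that $\overline M$ is CM (which follows from $M$ CM and $\overline M$ a Cartier divisor) and that the vanishing $\Hh^j_\frakm(\overline M, \sN) = 0$ for $j < \dim\overline M$ genuinely holds for every negative ample twist rather than only asymptotically — this is true on CM projective schemes but deserves an explicit citation or one-line justification via the section ring and local duality. The secondary subtlety is the $p$-adic separatedness / completeness argument used to descend vanishing from $\overline M$ to $M$; this requires that $R$ is complete (which is in \myref{nota:qfs-gpr}) and that the relevant local cohomology modules are cofinite over $R$, which again uses $M$ Cohen--Macaulay and projective over $R$.
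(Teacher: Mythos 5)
There is a genuine gap, and it sits exactly where you flagged your ``main obstacle.'' Your argument needs, as input to \myref{lem:nonnormal-Serre-cond} with $\sL = \sA^\inv$, the vanishing $\Hh^j_\frakm(\overline M, \sA^{-p^e}|_{\overline M}) = 0$ for \emph{every} $e \ge 1$ and $j < d$, and you justify this by asserting that on a Cohen--Macaulay projective scheme over a field one has $\Hh^j_\frakm(\overline M, \sN) = 0$ for $j < \dim \overline M$ for \emph{every} negative ample twist $\sN$, uniformly. That assertion is false. Since $\overline M$ is proper over $k$, $\Hh^j_\frakm(\overline M, \sA^{-m}) = \Hh^j(\overline M, \sA^{-m})$, and by Serre/local duality on the CM scheme $\overline M$ its vanishing for $j < d$ is equivalent to $\Hh^{d-j}(\overline M, \omega_{\overline M} \otimes \sA^{m}) = 0$, i.e.\ to the dual form of Kodaira vanishing — which fails in positive characteristic (Raynaud's examples are smooth, hence CM, surfaces). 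Serre vanishing only gives this for $m \gg 0$. So your proposed shortcut begs the question: the uniform vanishing for all $e \ge 1$ is essentially the statement being proved, and it is precisely what the quasi-$F$-split hypothesis must be used to establish.

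The paper's proof repairs this with a descending induction on $e$, using the splitting of $\Phi_{M,n}$ at every step rather than once at the end: Serre vanishing gives $\Hh^j_\frakm(\overline M, \sA^{-p^e}) = 0$ for $e \ge e_0$; applying \myref{lem:nonnormal-Serre-cond} to $\sL = \sA^{-p^{e_0-1}}$ then gives $\Hh^j_\frakm(\overline M, Q_{M,n} \otimes \sA^{-p^{e_0-1}}) = 0$; the splitting exhibits $\Hh^j_\frakm(\overline M, \sA^{-p^{e_0-1}})$ as a direct summand of this, so the plain vanishing now holds for all $e \ge e_0 - 1$; iterating brings you down to $e = 0$. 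Your final step — the sequence $0 \to \sA^\inv \xrightarrow{\cdot p} \sA^\inv \to \sA^\inv|_{\overline M} \to 0$ and the observation that multiplication by $p$ must then be injective (hence bijective) on the $\frakm$-torsion modules $\Hh^j_\frakm(M, \sA^\inv)$ for $j \le d$ — is sound and matches the paper, but the heart of the proof is the inductive descent in $e$, which your write-up is missing.
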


\begin{proof}
  The proof is analogous to \cite[Theorem~3.9]{OnukiTakamatsuYoshikawa25v2}.
  We include a proof for the convenience of the reader.
  
  Fix $j < d$.
  First, we claim that $H^j_\frakm(\overline{M}, \cA^{-p^e}|_{\overline{M}}) = 0$ for every $e \gg 0$.
  This follows because, taking Matlis dual, we have
  \begin{equation*}
    \Hh^j_\frakm(\overline{M}, \sA^{-p^e})^\vee
    \isom \Ext^{d-j}_{\overline{M}}(\sA^{-p^e}, \omega_{\overline{M}})
    \isom \Hh^{d-j}(\overline{M}, \omega_{\overline{M}} \otimes \sA^{p^e}) = 0\text{,}
  \end{equation*}
  where the first isomorphism holds by Grothendieck and local duality (see~\cite[Lemma~2.2]{B+MMP}), by noting that $X$ is Cohen--Macaulay.
  By \myref{lem:nonnormal-Serre-cond}, $\Hh^j_\frakm(\overline{M}, Q_{M, n} \otimes \sA^{-p^e}\restrict{\overline{M}}) = 0$ for every $e \ge e_0 - 1$ and every $n \ge 1$.
  Since $M$ is $n$-quasi-$F$-split for some $n \ge 1$ by assumption, $\Hh^j_\frakm(\overline{M}, \sA^{-p^e}\restrict{\overline{M}}) = 0$ for every $e \ge e_0 - 1$.
  Repeating this, we obtain $\Hh^j_\frakm(\overline{M}, \sA^\inv\restrict{\overline{M}}) = 0$.

  Now consider the exact sequence
  \[
    0 \to \sA^\inv \xrightarrow{\cdot p} \sA^\inv \to \overline{\sA^\inv} \to 0.
  \]
  Taking the long exact sequence, we obtain $\Hh^j_\frakm(M, \sA^\inv) = 0$ for $j < d + 1$ since $p \in \frakm$ and the local cohomology modules are $\frakm$-torsion.
\end{proof}

\begin{theorem}\label{thm:qFs-implies-gpr}
  Let the notation be as in \myref{nota:qfs-gpr}.
  Assume that $M$ is quasi-$F$-split, $-\Kk_M$ is an ample Cartier divisor, and $M$ is perfectoid BCM-regular at every closed point. 
  Then $M$ is globally $\bmplus$-regular.
\end{theorem}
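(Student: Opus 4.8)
The plan is to reduce global $\bmplus$-regularity of $M$ to perfectoid BCM-regularity of the anticanonical section ring at its vertex, and to let quasi-$F$-splitting enter only through the Kodaira-type vanishing of \myref{prop:nonnormal-dual-Kodaira-vanishing}. Set $\sA \coloneqq \omega_M^\inv = \sO_M(-\Kk_M)$, an ample line bundle, and let $S \coloneqq \sectionRing{M}{\sA}$ with irrelevant maximal ideal $\frakm_S = \frakm S + S_{> 0}$; note that $S$ is a normal domain since $M$ is. By the reduction lemma comparing $M$ with $\sectionRing{M}{\sA}$ (proved at the start of \myref{sec:Preliminaries}) it suffices to show that $S_{\frakm_S}$ is $\bmplus$-regular, and, passing to the $\frakm_S$-adic completion $\widehat{S}_{\frakm_S}$ (which does not affect $\bmplus$-regularity, by \cite{B+MMP}), that $\widehat{S}_{\frakm_S}$ is $\bmplus$-regular; by \cite{B+MMP} this is in turn equivalent to $\widehat{S}_{\frakm_S}$ being BCM-regular with respect to a perfectoid big Cohen--Macaulay $R^+$-algebra.

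First I would extract the purely local input. Since $M$ is perfectoid BCM-regular at every closed point and BCM-regularity passes to all localizations, $M$ is BCM-regular at every point; in particular $M$ is Cohen--Macaulay (purity of each $\sO_{M,x}$ into a big Cohen--Macaulay algebra forces the lower local cohomology of $\sO_{M,x}$ to vanish). Consequently $\widehat{S}_{\frakm_S}$ is BCM-regular on its punctured spectrum: away from the generic point of $\Spec R$ this is because the relevant locus is, Zariski-locally on $M$, a torsor under the multiplicative group over $M$ (here one uses that $-\Kk_M$ is Cartier, so $\sA$ is an honest line bundle), hence smooth over $M$; and a short additional argument handles the primes lying over the generic point of $\Spec R$, using that the generic fibre $M_K$ is a regular Fano variety over the characteristic-zero field $K = \operatorname{Frac} R$, so that its anticanonical cone is klt and therefore BCM-regular.

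Next I would bring in quasi-$F$-splitting. As $M$ is Cohen--Macaulay and quasi-$F$-split, \myref{prop:nonnormal-dual-Kodaira-vanishing} applies with every ample line bundle in place of $\sA$, giving $\Hh^j_\frakm(M, \sL^\inv) = 0$ for all ample $\sL$ and all $j < d+1$; by Grothendieck--Serre duality over $R$ this says $\Hh^i(M, \omega_M \otimes \sL) = 0$ for all $i > 0$. Reading this off for $\sL = \sA, \sA\powotimes{2}, \dots$ (and, with $\sL = \sA$, for $\sO_M = \omega_M \otimes \sA$) shows that $S$ is arithmetically Cohen--Macaulay with respect to $\sA$, so $\widehat{S}_{\frakm_S}$ is Cohen--Macaulay; since $-\Kk_M$ is Cartier, the cone computation makes $\omega_S$ invertible, so $\widehat{S}_{\frakm_S}$ is in fact Gorenstein. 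The same vanishing, together with Bhatt's vanishing theorem \cite{Bhatt21v2:CM} (used exactly as in the proof of \myref{prop:nonnormal-dual-Kodaira-vanishing}), shows that $\widehat{S}_{\frakm_S}$ is BCM-rational. Finally, a Gorenstein complete local domain that is BCM-rational and is BCM-regular on its punctured spectrum is BCM-regular: this is the mixed-characteristic, big Cohen--Macaulay avatar of the implications ``rational Gorenstein singularity $\Rightarrow$ canonical $\Rightarrow$ log terminal'' and ``log terminal $\Rightarrow$ BCM-regular'', and it can be established directly via the perfectoid test ideal formalism of \cite{B+MMP} by closing the gap between the vertex and the punctured spectrum. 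Hence $\widehat{S}_{\frakm_S}$ is BCM-regular, and $M$ is globally $\bmplus$-regular.

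I expect the genuine difficulty to lie in this last step and the surrounding test-ideal bookkeeping: one must work with perfectoid big Cohen--Macaulay algebras chosen compatibly enough for the local-to-global and punctured-spectrum-to-vertex arguments, and must invoke Bhatt's vanishing theorem in the precise form needed. The passage from quasi-$F$-splitting (a statement about the maps $\Phi_{M,n}$, hence about Witt vectors modulo $p$) to the uniform vanishing of $\Hh^j_\frakm(M,\sL^\inv)$ over all ample twists --- which is what \myref{lem:nonnormal-Serre-cond} and the exact sequences \eqref{eq:exact-p}, \eqref{eq:exact-V-R} are for --- is more routine but still requires care, in particular regarding the behaviour of Grothendieck--Serre duality over the mixed-characteristic base $R$ and the $\Hh^0$/$\Hh^1$ subtleties in the local cohomology of the section ring.
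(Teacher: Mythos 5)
There is a genuine gap at the decisive step. Your strategy lets quasi-$F$-splitting enter only through the Kodaira-type vanishing of \myref{prop:nonnormal-dual-Kodaira-vanishing}, and then asserts that ``the same vanishing, together with Bhatt's vanishing theorem, shows that $\widehat{S}_{\frakm_S}$ is BCM-rational.'' That assertion is essentially the whole content of the theorem and does not follow from vanishing alone. BCM-rationality of the cone amounts (degree by degree) to the injectivity of
\[
  \Hh^{d+1}_\frakm\bigl(M,\sO_M(n\Kk_M)\bigr) \to \Hh^{d+1}_\frakm\bigl(M^+,\sO_{M^+}(n\Kk_M)\bigr)
\]
for all $n \ge 1$; the vanishing of the lower local cohomology of $M$ and of $M^+$ says nothing about the kernel of this top-degree map. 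The paper proves exactly this injectivity by a descending induction on $n$ along powers of $p$: the base case $n = p^{e}$, $e \gg 0$, comes from the \emph{global} use of the perfectoid BCM-regularity hypothesis via the test ideal ($\tau(\sO_M) = \sO_M$, hence $\mathbf{B}^0(M;\omega_M^{-n}) = \Hh^0(M,\omega_M^{-n})$ and \cite[Lemma~4.8~(a)]{B+MMP}); the inductive descent from $p^{e}\Kk_M$ to $p^{e-1}\Kk_M$ uses quasi-$F$-splitting as an actual splitting of $\Phi_{M,n}$, fed through the exact sequences \eqref{eq:exact-p} and \eqref{eq:exact-V-R} for $Q_{M,n}$ and $Q_{M^+,n}$, with the two vanishing theorems serving only to reduce modulo $p$ and to kill the boundary terms. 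Your proposal uses the BCM-regularity hypothesis only locally (to get Cohen--Macaulayness and regularity on the punctured spectrum of the cone) and never uses the splitting itself, so neither the base case nor the descent is available; without them the claimed BCM-rationality is unsupported. As a sanity check, vanishing of the relevant lower local cohomology cannot imply rationality-type injectivity in top degree, since that would make the quasi-$F$-split hypothesis redundant whenever Kodaira-type vanishing holds for other reasons.

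The surrounding architecture of your argument is otherwise reasonable and close in spirit to the paper's first lemma (reduction from $M$ to the section ring), and the final step you invoke --- Gorenstein plus BCM-rational implies BCM-regular --- is the right analogue of the $F$-singularity statement. But as written the proof pivots on an unproved claim that conceals the two essential uses of the hypotheses, so it cannot be accepted without supplying the induction described above (or an equivalent argument producing the top-degree injectivity into $M^+$).
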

Here, see \cite[Definition 6.9]{MaSchwede21} for the definition of perfectoid BCM-regularity.

\begin{proof}
  Let $\tau(\sO_M) \subset \sO_M$ be the test ideal sheaf defined in~\cite{B+:Testideals}.
  Using the comparison~\cite[Theorem~B~(j)]{B+:Testideals}, we see $\tau(\sO_M) = \sO_M$ from the perfectoid BCM-regularity of $M$. From~\cite[Theorem~B~(j)]{B+:Testideals} and the ampleness of $-K_M$, it follows that there is some divisor $G \ge 0$ such that
  \[
    H^0(M, \tau(\sO_M) \otimes \omega_M^{-n}) = \mathbf{B}^0(M, \epsilon G; \omega_M^{-n}) \subset \mathbf{B}^0(M; \omega_M^{-n})
  \]
  for every $n \gg 0$ and $0 < \epsilon \ll 1$,
  where $\mathbf{B}^0(M, \epsilon G; \omega_M^{-n}) \subset H^0(M, \omega_M^{-n})$ is the submodule defined in~\cite[Definition~4.2]{B+MMP}.
  Therefore, we have
  \[
    H^0(M, \omega_M^{-n}) = H^0(M, \tau(\sO_M) \otimes \omega_M^{-n}) \subset \mathbf{B}^0(M; \omega_M^{-n}) \subset H^0(M, \omega_M^{-n})\text,
  \]
  that is, $\mathbf{B}^0(M; \omega_M^{-n}) = H^0(M, \omega_M^{-n})$.
  By~\cite[Lemma~4.8~(a)]{B+MMP}, there exists $e_0$ such that for every $e \geq e_0$, the map
  \[
    H^{d+1}_\frakm(M,\sO_M(p^eK_M)) \to H^{d+1}_\frakm(M^+,\sO_{M^+}(p^eK_M))
  \]
  is injective.

  We now show that
  \[
    H^{d+1}_\frakm(M,\sO_M(p^{e_0-1}K_M)) \to H^{d+1}_\frakm(M^+,\sO_{M^+}(p^{e_0-1}K_M))
  \]
  is also injective if $e_0 \geq 1$.
  By Bhatt's vanishing theorem \cite[Theorem~6.28]{Bhatt21v2:CM}, we have
    $H^d_\frakm(M^+, \sO_{M^+}(lK_M))=0$
  for every $l \geq 1$.
  Furthermore, by \myref{prop:nonnormal-dual-Kodaira-vanishing}, we also have
    $H^d_\frakm(M, \sO_{M}(lK_M))=0$
  for every $l \geq 1$.
  Thus we obtain the commutative diagram
  \begin{equation}\label{eq:+-reg}
    \begin{tikzcd}
      H^{d}_\frakm(\overline{M},\sO_{\overline{M}}(lK_{\overline{M}})) \arrow[r] \arrow[d,hookrightarrow] & H^{d}_\frakm(\overline{M^+},\overline{\sO_{M^+}(lK_M)}) \arrow[d,hookrightarrow] \\
      H^{d+1}_\frakm(M,\sO_M(lK_M)) \arrow[r] & H^{d+1}_\frakm(M^+,\sO_{M^+}(lK_M))
    \end{tikzcd} 
  \end{equation}
  for every $l \geq 1$.
  In particular, the map
  \[
    H^{d}_\frakm(\overline{M},\sO_{\overline{M}}(p^eK_{\overline{M}})) \to H^{d}_\frakm(\overline{M^+},\overline{\sO_{M^+}(p^eK_M)})
  \]
  is injective for all $e \geq e_0$.
  By \eqref{eq:exact-V-R}, we have the exact sequences
  \begin{gather*}
    0 \to F_*(Q_{M,n-1} \otimes \sO_M(p^{e_0}K_M)) \to Q_{M,n} \otimes \sO_M(p^{e_0-1}K_M) \to F_*\sO_{\overline{M}}(p^{e_0}K_{\overline{M}}) \to 0\text, \\
    0 \to F_*(Q_{M^+,n-1} \otimes \sO_{M^+}(p^{e_0}K_M)) \to Q_{M^+,n} \otimes \sO_{M^+}(p^{e_0-1}K_M) \to F_*\overline{\sO_{M^+}(p^{e_0}K_M)} \to 0
  \end{gather*}
  for every $n > 1$.
  Using Bhatt's vanishing theorem \cite[Theorem~6.28]{Bhatt21v2:CM}, we obtain $H^{d-1}_\frakm(\overline{M^+}, \overline{\sO_{M^+}(p^{e_0}K_M)}) = 0$.
  Taking the long exact sequences in local cohomology, we deduce that
  \[
    H^{d}_\frakm(\overline{M},Q_{M,n} \otimes \sO_M(p^{e_0-1}K_M)) \to H^{d}_\frakm(\overline{M^+},Q_{M^+,n} \otimes \sO_{M^+}(p^{e_0-1}K_M))
  \]
  is injective for every $n \geq 1$ by induction on $n$.
  Since $M$ is quasi-$F$-split, we conclude that
  \[
    H^{d}_\frakm(\overline{M},\sO_{\overline{M}}(p^{e_0-1}K_{\overline{M}})) \to H^{d}_\frakm(\overline{M^+},\overline{\sO_{M^+}(p^{e_0-1}K_M)})
  \]
  is injective.
  Therefore, by \eqref{eq:+-reg}, the map
  \[
    H^{d+1}_\frakm(M,\sO_M(p^{e_0-1}K_M)) \to H^{d+1}_\frakm(M^+,\sO_{M^+}(p^{e_0-1}K_M))
  \]
  is injective.
  Repeating this process, we eventually obtain that
  \[
    H^{d+1}_\frakm(M,\sO_M(K_M)) \to H^{d+1}_\frakm(M^+,\sO_{M^+}(K_M))
  \]
  is injective.
  Therefore, by~\cite[Proposition~6.8, Lemma~4.8~(a)]{B+MMP}, $M$ is globally $\bmplus$-regular.
\end{proof}


\section{Regular curves and smooth surfaces in mixed characteristic}\label{sec:curves-and-smooths}
We present classification results for regular Fano curves and smooth weak del Pezzo surfaces.
Most of the results in this section should be well known to the experts, but the author could not find suitable references.

\begin{remark}\label{rmk:multiple-fiber}
  Let $(R, \frakm, k)$ be a Noetherian local ring of mixed characteristic $(0, p)$.
  Let $M$ be a regular integral flat projective $R$-scheme with $\Hh^0(M, \sO_M) = R$.
  Suppose that $\omega_M^\inv$ is ample.
  Then $M_k$ is not a multiple fiber of $M \to \Spec R$: that is, there is no $a \ge 2$ such that $M_k = a D$ for some integral divisor $D$ on $M$.
  This follows from the well-known argument using the Euler characteristic $\chi(\sO_D)$ (see \cite[Proposition 1.4]{Fujita90}).
\end{remark}

\subsection{Regular Fano curves in mixed characteristic}
We classify regular Fano curves in mixed characteristic, based on the classical theory of arithmetic surfaces (see, for example, \cite{Liu02}).

\begin{proposition}[{\cite[Section 9]{Liu02}}]\label{prop:arithmetic-surface}
  Let $(R, \frakm, k)$ be a complete DVR of mixed characteristic $(0, p > 0)$ with $k$ algebraically closed.
  Let $M$ be a regular integral flat projective $R$-scheme with $\Hh^0(M, \sO_M) = R$.
  Assume $\dim M = 2$ and $\omega_M^\inv$ is big.
  Then there exists a sequence of morphisms
  \begin{equation*}
    \begin{tikzcd}
      M = M_0 \ar[r, "\mu_0"] & M_1 \ar[r, "\mu_1"] & \dotsm \ar[r, "\mu_{n - 1}"] & M_n \isom \PP^1_R
    \end{tikzcd}\text{,}
  \end{equation*}
  where each $\mu_i$ is the blowup of $M_{i + 1}$ at a closed point.
  In particular, $(M_k)\redn$ is an SNC divisor on $M$, and each of its components is $\PP^1_k$.
\end{proposition}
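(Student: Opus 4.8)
The plan is to exhibit $M$ as a regular arithmetic surface over $R$ and invoke the classical theory of minimal models of fibered surfaces (as in~\cite[\S 9]{Liu02}), after first using the bigness of $\omega_M^\inv$ to pin down the generic fiber. Let $K$ be the fraction field of $R$ and $M_K \coloneqq M \times_R K$; since $M_K$ is an open subscheme of the regular scheme $M$ and $K$ has characteristic $0$, it is a smooth projective curve over $K$, geometrically integral because $M$ is integral with $\Hh^0(M, \sO_M) = R$. First I would show that the genus $g$ of $M_K$ is $0$: by flat base change, $\dim_K \Hh^0(M_K, \omega_{M_K}^{-m})$ equals the rank of the free $R$-module $\Hh^0(M, \omega_M^{-m})$ for every $m \ge 1$, and bigness of $\omega_M^\inv$ makes these numbers unbounded in $m$, whereas $g \ge 1$ would give $\deg \omega_{M_K}^{-m} = m(2-2g) \le 0$ and hence $\dim_K \Hh^0(M_K, \omega_{M_K}^{-m}) \le 1$. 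So $g = 0$ and $M_K$ is a smooth conic over $K$.

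Next I would pass to a relatively minimal model. As $M$ is a regular fibered surface over the DVR $R$, the theory of minimal models of arithmetic surfaces~\cite[\S 9]{Liu02} furnishes a sequence
\[
  M = M_0 \xrightarrow{\mu_0} M_1 \xrightarrow{\mu_1} \dots \xrightarrow{\mu_{n-1}} M_n,
\]
in which each $\mu_i$ is, by Castelnuovo's criterion, the inverse of the blowup of $M_{i+1}$ at a closed point, and $M_n$ is a regular projective flat $R$-scheme with $\Hh^0(M_n,\sO_{M_n}) = R$ which is relatively minimal (admits no such contraction); its generic fiber is again $M_K$. It then remains to prove $M_n \isom \PP^1_R$. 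For this I would analyze the closed fiber $F \coloneqq (M_n)_k = \sum_i a_i \Gamma_i$ by intersection theory on $M_n$, where $F^2 = 0$, $F \cdot \Gamma_i = 0$, $\Gamma_i^2 \le 0$, $p_a(\Gamma_i) \ge 0$, and, by the adjunction formula $\Gamma_i \cdot (\Gamma_i + \Kk_{M_n}) = 2p_a(\Gamma_i) - 2$, one has $\sum_i a_i (\Kk_{M_n} \cdot \Gamma_i) = \Kk_{M_n} \cdot F = 2g - 2 = -2$. If $F$ were reducible then $\Gamma_i^2 < 0$ for every component (Zariski's lemma), and a component with $\Kk_{M_n} \cdot \Gamma_i < 0$ would necessarily have $\Gamma_i^2 = -1$ and $p_a(\Gamma_i) = 0$, hence be $\isom \PP^1_k$ and contractible by Castelnuovo's criterion, contradicting relative minimality; thus $\Kk_{M_n}\cdot\Gamma_i \ge 0$ for all $i$ and the displayed sum is $\ge 0$, a contradiction. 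So $F = a_1\Gamma_1$ is irreducible with $\Gamma_1^2 = 0$, and $p_a(\Gamma_1) = 1 + \tfrac12\Kk_{M_n}\cdot\Gamma_1 = 1 - 1/a_1 \in \ZZ_{\ge 0}$ forces $a_1 = 1$, $p_a(\Gamma_1) = 0$, $\Gamma_1 \isom \PP^1_k$. Hence $M_n \to \Spec R$ is flat with smooth fibers (recall $M_K$ is smooth over $K$), so smooth; since $k = \overline{k}$ and $R$ is complete, a $k$-point of $\Gamma_1$ lifts to an $R$-point of $M_n$, so $M_K \isom \PP^1_K$ and $M_n \to \Spec R$ is a smooth proper morphism with all fibers $\isom \PP^1$ and a section, which exhibits $M_n$ as $\PP(\mathcal{E})$ for a rank-$2$ bundle $\mathcal{E}$ on $\Spec R$; as $\Spec R$ is local, $\mathcal{E}$ is free and $M_n \isom \PP^1_R$.

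Finally, the ``in particular'' assertion follows by descending induction along $M_n = \PP^1_R \to \dots \to M_0 = M$: the closed fiber of $\PP^1_R$ is $\PP^1_k$, and if the reduced support of $(M_{i+1})_k$ is an SNC divisor all of whose components are $\isom \PP^1_k$, then blowing up a closed point $x$ of $M_{i+1}$ either misses $(M_{i+1})_k$ (changing nothing) or satisfies $\kappa(x) = k$, so the new exceptional divisor is $\isom \PP^1_k$ and meets the strict transforms of the components through $x$ transversally, keeping the reduced support of $(M_i)_k$ an SNC union of $\PP^1_k$'s. The main obstacle is the identification $M_n \isom \PP^1_R$ in the previous paragraph, which rests on the fiber analysis of regular genus-$0$ arithmetic surfaces and on exploiting the completeness of $R$ and the algebraic closedness of $k$ to produce a $K$-rational point on the conic $M_K$; the other steps are either formal or direct appeals to~\cite[\S 9]{Liu02}.
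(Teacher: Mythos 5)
Your proposal is correct and follows essentially the same route as the paper: both use $\Kk_M\ndot M_k=2g-2=-2$ together with adjunction and Zariski's lemma to locate a $(-1)$-curve in any reducible fiber, contract it by Castelnuovo's criterion for arithmetic surfaces, and iterate until the closed fiber is an irreducible reduced $\PP^1_k$, at which point completeness of $R$ identifies the resulting model with $\PP^1_R$. The only (harmless) differences are organizational: you phrase the descent as passage to a relatively minimal model rather than induction on the number of fiber components, you deduce $g(M_\eta)=0$ directly from bigness instead of citing Liu, and you realize the final model as $\PP^1_R$ via a Henselian section and the projective-bundle description rather than via the paper's deformation lemma (\myref{lem:deform-projective-bundle}).
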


\begin{proof}
  The proof is by induction on the number of irreducible components of $M_k$.
  Suppose first that $M_k$ is irreducible. Then $M_k$ is reduced by \myref{rmk:multiple-fiber}.
  By \cite[Proposition 9.3.16 (a)]{Liu02}, we deduce that $M_k$ is a conic over $k$, noting that $k$ is algebraically closed. Hence, $M_k$ is isomorphic to $\PP^1_k$.
  Therefore, $M \isom \PP^1_R$ by \myref{lem:deform-projective-bundle} below.

  Next, suppose that $M_k$ is reducible. Let $\eta$ be the generic point of $\Spec R$. Then $M_\eta$ is smooth since $M$ is regular and $\residuefield{\eta}$ has characteristic zero.
  Since $\Hh^0(M, \sO_{M}) = R$ by assumption, we have $\Hh^0(M_\eta, \sO_{M_\eta}) = \residuefield{\eta}$.
  By \cite[Proposition 9.3.16 (a)]{Liu02}, $M_\eta$ is a conic over $\residuefield{\eta}$.
  Let $E$ be a prime divisor appearing in $M_k$. From $M_k \ndot E = 0$ and $M_k \equivlinearly 0$, we see $E\powndot{2} < 0$.
  We can choose $E$ so that $-\Kk_M \ndot E > 0$ since $-\Kk_M \ndot M_k = \deg(-\Kk_{M_\eta}) = 2$.
  By \cite[Proposition 9.3.10 (a) and Remark 9.3.2]{Liu02}, there exists a birational morphism $\mu_0 \colon M \to M_1$ to a regular integral flat projective $R$-scheme $M_1$ such that $\mu_0$ is the blowup of $M_1$ at $\mu_0(E)$.
  By the inductive hypothesis for $M_1$, we obtain the desired sequence of blowups.
\end{proof}

\begin{proposition}\label{prop:classification-of-2-dim-Fano}
  In the setting of \myref{prop:arithmetic-surface},
  assume that $\omega_M^\inv$ is ample.
  Write $\frakm = (t)$.
  Then $M$ is isomorphic to either $\PP^1_R$ or $M \isom (XY = tZ^2) \subset \PP^2_R$, where $X, Y, Z$ are homogeneous coordinates on $\PP^2_R$.
  In particular, $M$ is globally $\bmplus$-regular.
\end{proposition}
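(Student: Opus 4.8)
The plan is to use the structure theorem \myref{prop:arithmetic-surface} as the starting point, so that we have a sequence of blowdowns $M = M_0 \to M_1 \to \dotsm \to M_n \isom \PP^1_R$, and then argue that the ampleness (not merely bigness) of $\omega_M^\inv$ forces the sequence to be very short. Concretely, I would compute the anticanonical degree against the closed fiber: since $-\Kk_M \ndot M_k = \deg(-\Kk_{M_\eta}) = 2$ and each component of $(M_k)\redn$ is a copy of $\PP^1_k$ with negative self-intersection (as noted in the proof of \myref{prop:arithmetic-surface}), ampleness of $\omega_M^\inv$ means every component $E$ of $M_k$ satisfies $-\Kk_M \ndot E > 0$, hence the number of components is at most $2$. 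If $M_k$ is irreducible, then $M \isom \PP^1_R$ already by the argument in \myref{prop:arithmetic-surface} (or \myref{lem:deform-projective-bundle}). So the remaining case is $M_k = E_1 + E_2$ with two $(-1)$-curve-like components meeting transversally; here exactly one blowdown $\mu_0 \colon M \to M_1$ occurs, $M_1 \isom \PP^1_R$, and $\mu_0$ is the blowup at a closed point $P$ of the closed fiber $\PP^1_k \subset \PP^1_R$.

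Next I would identify $M$ explicitly in this two-component case. Blowing up $\PP^1_R$ at a closed point $P$ lying on the closed fiber: choosing coordinates so that $P$ corresponds to an appropriate point, one gets the model $M \isom (XY = tZ^2) \subset \PP^2_R$, where $t$ is a uniformizer of $R$. To verify this I would check directly that the subscheme $(XY = tZ^2) \subset \PP^2_R$ is regular (the only candidate singular point is $[0:0:1]$ over the closed point $\frakm$, and there $XY - tZ^2$ has nonzero differential because of the $t$ term and the fact that $t \in \frakm$ is a regular parameter of $R$ — so it is regular as a scheme over $R$, using that $R$ is a DVR), that it is flat and projective over $R$ with $\Hh^0 = R$ (it's a conic bundle), that its generic fiber is a smooth conic and its closed fiber is $(XY = 0) \subset \PP^2_k$, i.e.\ two lines meeting at a point, hence matching the two-component case, and that $\omega_M^\inv$ is ample (for a degree-two hypersurface in $\PP^2_R$ one has $\omega_M \isom \sO_M(-1)$ by adjunction, so $\omega_M^\inv \isom \sO_M(1)$, which is ample). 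Then I would argue that, among regular flat projective $R$-schemes $M$ with $\Hh^0 = R$, ample $\omega_M^\inv$, and closed fiber $= $ a pair of lines through a point, the isomorphism type is unique: both the blowup $\Bl_P \PP^1_R$ and the explicit conic $(XY=tZ^2)$ are such, and a regular arithmetic surface with given fiber configuration dominating $\PP^1_R$ by a single blowup is determined by the center, which is unique up to automorphism of $\PP^1_R$.

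Finally, for the global $\bmplus$-regularity claim: $\PP^1_R$ is globally $\bmplus$-regular since $\PP^n$ over an excellent $\bmplus$-regular (indeed regular) base is globally $\bmplus$-regular — this follows from the known fact that $\PP^1_R$ is globally $F$-split modulo $p$ and \myref{lem:mod-p-qFs}, \myref{thm:qFs-implies-gpr}, or alternatively one can cite the analog of \cite[Theorem 6.4]{Yoshikawa25}; for the conic $M = (XY = tZ^2)$, I would invoke \myref{thm:qFs-implies-gpr}: $M$ is regular hence Cohen--Macaulay and perfectoid BCM-regular at every closed point, $-\Kk_M$ is ample Cartier as computed above, so it suffices to check $M$ is quasi-$F$-split, and for that by \myref{lem:mod-p-qFs} it suffices that the closed fiber $M_k = (XY = 0) \subset \PP^2_k$ be quasi-$F$-split — but this is a nodal plane conic (two lines meeting at a node), a Gorenstein del Pezzo surface of dimension $1$... here one applies the criterion \myref{prop:MillerSchwede-global} or a direct check that $(XY=0)$ is globally $F$-split (its normalization is $\PP^1 \sqcup \PP^1$ with conductor a reduced point on each, and the trace map is surjective at the node for $p > 0$). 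The main obstacle I expect is pinning down the precise normal form $(XY = tZ^2)$: showing that the single-blowup arithmetic surface with two-line closed fiber is isomorphic to that explicit conic, rather than some other regular model, and handling the mild subtlety that the center $P$ of the blowup is a $k$-point of the special fiber but the resulting $M$ must still be regular over all of $\Spec R$ — this is where one must be careful that $t$ is a uniformizer, not a unit, so that the defining equation is genuinely a minimal regular model.
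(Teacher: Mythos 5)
Your argument is correct and follows essentially the same route as the paper: bound the closed fiber using $-\Kk_M \ndot M_k = 2$ together with ampleness (the paper phrases this via adjunction on a component of multiplicity one, you via the total anticanonical degree of the fiber), identify the two-component case with $\Bl_x \PP^1_R \isom (XY = tZ^2)$, and deduce global $\bmplus$-regularity from \myref{thm:qFs-implies-gpr}. The only cosmetic difference is in verifying quasi-$F$-splitting of the conic bundle: the paper invokes the Fedder-type criterion of Yoshikawa directly on $M$, whereas you reduce via \myref{lem:mod-p-qFs} to the global $F$-splitting of the nodal closed fiber $(XY=0) \subset \PP^2_k$; both work.
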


\begin{proof}
  Let $M_k = \sum_i a_i E_i$ ($a_i \ge 1$) be the irreducible decomposition.
  There exists $E_l$ such that $a_l = 1$.
  By the adjunction formula, we have
  \begin{equation*}
    -\Kk_{M}\restrict{E_l} \equivlinearly -\Kk_{E_l} + E_l\restrict{E_l}
    \equivlinearly 2H - \frac{1}{a_l}\sum_{i \in I_l} a_iE_i\text,
  \end{equation*}
  where $I_l = \setComprehension{i \neq l}{E_i \cap E_l \neq \emptyset}$ and $H$ is a divisor of degree $1$ on $E_l \isom \PP^1_k$.
  Since $a_l = 1$ and $-\Kk_{M}\restrict{E_l}$ is ample, we obtain $2 - \sum_{i \in I_l} a_i > 0$.
  Thus, $\card{I_l}$ is $0$ or $1$, and we deduce that $M \isom \PP^1_R$ or $M \isom \Bl_x \PP^1_R$ for a closed point $x \in \PP^1_R$.
  The latter is isomorphic to $(XY = tZ^2) \subset \PP^2_R$.
  The global $\bmplus$-regularity of $M$ follows, for example, by combining \myref{thm:qFs-implies-gpr} and the Fedder-type criterion for quasi-$F$-splitting~\cite[Theorem 4.13]{Yoshikawa25}.
\end{proof}




\subsection{Smooth weak del Pezzo surfaces in mixed characteristic}
We present a classification result for smooth weak del Pezzo surfaces over a complete DVR of mixed characteristic.
Recall that every weak del Pezzo surface over an algebraically closed field is either $\PP^1 \times \PP^1$, $\rationalruled{2}$, or a blowup of $\PP^2$ at $n \le 8$ points.
Smooth weak del Pezzo surfaces in mixed characteristic are lifts of these surfaces, and we classify them based on deformation theory.

\begin{lemma}\label{lem:isomorphic-closed-fibers}
  Let $(R, \frakm, k)$ be a Noetherian local ring, and let $f \colon M \to N$ be a morphism between flat projective $R$-schemes.
  If $f_k = f \otimes_R k \colon M_k \to N_k$ is an isomorphism, then $f$ is an isomorphism.
\end{lemma}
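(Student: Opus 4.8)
The statement is a "fiberwise isomorphism plus flatness implies isomorphism" result, and the natural strategy is to reduce to a statement about modules over a local ring and apply the local criterion for flatness together with Nakayama's lemma. First I would observe that the assertion is local on the base in a suitable sense: since $M$ and $N$ are projective over $R$, the morphism $f$ is proper, so it suffices to check that $f$ is an isomorphism after passing to an affine open cover of $N$ and the corresponding open subschemes of $M$. Actually, the cleanest route is to first show $f$ is finite: the map $f_k$ is finite (being an isomorphism), and $f$ is proper, so by the semicontinuity/properness argument (e.g.\ \citestacks{02OG} or the fact that a proper morphism with a finite fiber is finite over a neighborhood of that fiber) and the fact that $M_k$ is the whole closed fiber, $f$ is finite. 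Then $f_\ast \sO_M$ is a coherent $\sO_N$-module, and we are reduced to showing that the natural map $\sO_N \to f_\ast \sO_M$ is an isomorphism.

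Granting finiteness, the key step is to apply the local criterion for flatness. Fix a point $n \in N$ lying over the closed point of $\Spec R$ (points not over $\frakm$ need separate but easier treatment, or one argues that being an isomorphism is a closed-and-open condition and the closed fiber meets every component). Working locally, let $A = \sO_{N,n}$ and let $B = (f_\ast\sO_M)_n$, a finite $A$-module; $B$ is flat over $R$ because $M$ is $R$-flat and $f$ is affine. The hypothesis that $f_k$ is an isomorphism says exactly that $B \otimes_R k \cong A \otimes_R k$ via the structure map, i.e.\ $B/\frakm B \to A/\frakm A$ is an isomorphism of $A/\frakm A$-modules (here I use that $M_k \to N_k$ is an isomorphism). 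Consider the cokernel $Q$ of $\sO_N \to f_\ast\sO_M$ and the kernel $K$; localizing, $0 \to K_n \to A \to B \to Q_n \to 0$. Since $B$ is $R$-flat and $A$ is $R$-flat, tensoring with $k$ over $R$ and using the isomorphism $B/\frakm B \cong A/\frakm A$ forces $Q_n/\frakm Q_n = 0$; as $Q_n$ is a finite $A$-module and $\frakm \subset \frakm_A$, Nakayama gives $Q_n = 0$, so $\sO_N \to f_\ast\sO_M$ is surjective near $n$. Then $0 \to K_n \to A \to B \to 0$ with $A, B$ both $R$-flat shows $K_n$ is $R$-flat, and $K_n/\frakm K_n = \ker(A/\frakm A \to B/\frakm B) = 0$, so again by Nakayama $K_n = 0$. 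Hence $\sO_N \to f_\ast\sO_M$ is an isomorphism in a neighborhood of $n$, and since $f$ is finite this means $f$ is an isomorphism over that neighborhood.

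To finish, I would note that the locus in $N$ over which $f$ is an isomorphism is open (it is the non-vanishing locus of the coherent sheaves $\ker$ and $\coker$ of $\sO_N \to f_\ast\sO_M$, the latter being coherent since $f$ is finite), and that this locus contains the entire closed fiber $N_k$ by the argument above. Because $N$ is projective over a local ring $R$, every irreducible component of $N$ meets $N_k$ (the structure morphism is proper, so its image is closed and contains the generic point only if it is all of $\Spec R$; in any case the complement of this open locus is a closed set disjoint from $N_k$, hence its image in $\Spec R$ is a closed set not containing $\frakm$, therefore empty, so the open locus is all of $N$). Therefore $f$ is an isomorphism globally.

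The main obstacle I anticipate is the finiteness step: one must be careful that "fiberwise isomorphism" is only assumed over the closed point $k$, so a priori $f$ is merely proper with one finite fiber, and one needs the theorem that a proper morphism is finite in a neighborhood of a point where the fiber is finite (this is where properness, not just separatedness, is used). Once finiteness is in hand, the rest is the standard flatness-plus-Nakayama descent and is routine; the only mild subtlety is bookkeeping the two Nakayama applications (first for the cokernel, then for the kernel) and confirming $R$-flatness of the kernel, which follows from the long exact sequence for $\Tor_\bullet^R(-,k)$ applied to $0 \to K_n \to A \to B \to 0$ using $\Tor_1^R(B,k) = 0$.
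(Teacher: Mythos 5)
Your proposal is correct and follows essentially the same route as the paper: establish finiteness of $f$ from properness plus the finiteness of the closed fiber (the paper phrases this as quasi-finite plus Zariski's main theorem), then kill the cokernel and kernel of $\sO_N \to f_\ast\sO_M$ by two applications of Nakayama, using $R$-flatness to keep the kernel sequence exact after reduction mod $\frakm$. The only differences are cosmetic (your extra care about spreading the conclusion from the closed fiber to all of $N$, and a harmless reversal of the arrow $A/\frakm A \to B/\frakm B$ in the write-up).
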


\begin{proof}
  Since $f_k$ is an isomorphism, we deduce that $f$ is quasi-finite; hence finite by Zariski's main theorem. Define $\sO_N$-modules $\mathcal{K}$, $\mathcal{C}$ by the exact sequence
  \[
    0 \to \mathcal{K} \to \sO_N \to f\push \sO_M \to \mathcal{C} \to 0\text{.}
  \]
  Since $f_k$ is an isomorphism, we obtain $\mathcal{C} = 0$ by Nakayama's lemma.
  Note that since $f_k$ is an isomorphism, $f$ is flat (by the fiberwise criterion for flatness \citestacks{039B} for example).
  Hence, we see
  \[
    0 \to \mathcal{K}_k \to \sO_{N_k} \to (f\push \sO_M)_k \to 0
  \]
  is exact. Since $f_k$ is an isomorphism, we obtain $\mathcal{K} = 0$ by Nakayama's lemma.
  Therefore, $f$ is an isomorphism by noting that $f$ is affine.
\end{proof}

\begin{lemma}\label{lem:deform-projective-bundle}
  Let $(R, \frakm, k)$ be a complete Noetherian local ring.
  Suppose that $M$ is a flat projective $R$-scheme and the closed fiber $M_k$ is isomorphic to $\PP_{N_0}(\sE_0)$ for a locally free sheaf $\sE_0$ on a projective $k$-scheme $N_0$.
  If $\Hh^2(M_k, \sO_{M_k}) = 0$, then there exist a flat projective $R$-scheme $N$ with $N_k \isom N_0$, a locally free sheaf $\sE$ on $N$ with $\sE_k \isom \sE_0$ (via $N_k \isom N_0$), and an isomorphism $M \isom \PP_N(\sE)$ that restricts to the given isomorphism $M_k \isom \PP_{N_0}(\sE_0)$.
\end{lemma}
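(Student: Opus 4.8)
The plan is to construct $N$ as a connected component of a relative Hilbert scheme of $M$ over $\Spec R$ parametrizing the fibers of the projection $\pi_0 \colon M_k = \PP_{N_0}(\sE_0) \to N_0$, and to recover $\sE$ by pushing forward to $M$ a lift of the relatively ample line bundle $\sO_{M_k/N_0}(1)$. To produce such a lift, observe that along the tower $R/\frakm^{n+1}$ the obstruction to lifting a line bundle from $M_k$ over a square‑zero extension with kernel $J$ lies in $\Hh^2(M_k, \sO_{M_k}) \otimes_k J$, which vanishes by hypothesis; this is the only place the assumption $\Hh^2(M_k, \sO_{M_k}) = 0$ is used. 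By Grothendieck existence for coherent sheaves on $M$ (proper over the complete local ring $R$), the resulting compatible system algebraizes to a line bundle $\sL$ on $M$ with $\sL|_{M_k} \isom \sO_{M_k/N_0}(1)$.

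Next, fix a very ample line bundle on $M$ and let $P$ be the Hilbert polynomial, computed on $M_k$, of a fiber $Z$ of $\pi_0$. The tautological family of fibers induces a morphism $j_0 \colon N_0 \to \operatorname{Hilb}^P(M_k)$ that identifies $N_0$ with an open and closed subscheme, and since $Z$ has trivial normal bundle in $M_k$ we have $\Hh^1(Z, N_{Z/M_k}) = 0$, so $\operatorname{Hilb}^P(M/R)$ is smooth over $\Spec R$ at every point of $j_0(N_0)$. Let $N$ be the connected component of $\operatorname{Hilb}^P(M/R)$ meeting $j_0(N_0)$ (treating connected components of $N_0$ separately if needed). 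As $\operatorname{Hilb}^P(M/R)$ is projective over $R$ and $R$ is Henselian, reduction induces a bijection on connected components, so $N_k \isom N_0$; moreover the smooth locus of $N \to \Spec R$ is open and contains all of $N_k$, and its complement is proper over $R$ with empty closed fiber, hence is empty. Thus $N$ is a smooth projective $R$-scheme with $N_k \isom N_0$.

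Let $\mathcal{Z} \subset N \times_R M$ be the universal subscheme, with projections $q \colon \mathcal{Z} \to N$ and $e \colon \mathcal{Z} \to M$; over $N_0$, $q$ is $\pi_0$ and $e$ restricts to the evaluation isomorphism $M_k \isom M_k$. Since $\PP^r$ is rigid ($\Hh^1(\PP^r, T_{\PP^r}) = \Hh^2(\PP^r, T_{\PP^r}) = 0$), relative rigidity shows that $q$ is a $\PP^r$‑fibration over an open neighborhood of $N_0$; the complement of the locus where $q$ is a $\PP^r$‑fibration is closed, proper over $R$, and has empty closed fiber, hence is empty, so $q$ is a $\PP^r$‑fibration. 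The line bundle $e^*\sL$ has degree $1$ on the fibers of $q$ over $N_0$, hence (degree being locally constant and $N$ connected) on all fibers of $q$; therefore $\sE := q_*e^*\sL$ is locally free of rank $r+1$, compatibly with base change, $\mathcal{Z} \isom \PP_N(\sE)$ with $e^*\sL$ the relative $\sO(1)$, and restriction to $N_0$ — together with the agreement of the relative $\sO(1)$'s — identifies $\sE|_{N_0}$ with $\sE_0$. Finally $e \colon \PP_N(\sE) = \mathcal{Z} \to M$ is proper, both sides are flat over $R$, and $e_k$ is an isomorphism, so $e$ is an isomorphism by \myref{lem:isomorphic-closed-fibers}; its inverse is the desired isomorphism $M \isom \PP_N(\sE)$, and it restricts on closed fibers to the given isomorphism $M_k \isom \PP_{N_0}(\sE_0)$ after choosing the identification $\sE|_{N_0} \isom \sE_0$ compatibly.

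The step I expect to be most delicate is pinning down the component $N$ and checking that the universal family $q$ is a genuine $\PP^r$‑fibration: when $N_0$ is singular, $M_k$ is singular along the corresponding fibers, and subschemes of $M_k$ with Hilbert polynomial $P$ may deform away from being fibers, so the Hilbert‑scheme component could a priori be strictly larger than $N_0$ and $j_0(N_0)$ need not be open. For the applications needed in this paper ($N_0$ a point or $\PP^1$, hence smooth) this difficulty does not occur. A secondary point is that a $\PP^r$‑fibration determines the underlying vector bundle only up to twist by a line bundle, which is precisely why we carry the lift $\sL$ of $\sO_{M_k/N_0}(1)$; using $\sL$ also sidesteps Brauer‑obstruction subtleties specific to mixed characteristic.
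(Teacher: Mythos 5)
Your argument is correct when $N_0$ is smooth, but it takes a genuinely different route from the paper's, and as written it does not prove the lemma in the stated generality. The paper's proof is much more direct: using $\Hh^2(M_k,\sO_{M_k})=0$ it lifts the globally generated line bundle $\sL_0 = \pi_0\pull\sO_{N_0}(1)$ (for $\sO_{N_0}(1)$ sufficiently ample) to a line bundle $\sL$ on $M$, checks global generation by cohomology and base change, and sets $N \coloneqq \Proj \sectionRing{M}{\sL}$ with $\pi \colon M \to N$ the induced morphism; then $\sE \coloneqq \pi\push \sA$ for a lift $\sA$ of $\sO_{\PP_{N_0}(\sE_0)}(1)$, and the evaluation map $M \to \PP_N(\sE)$ is an isomorphism by \myref{lem:isomorphic-closed-fibers}. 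That construction uses no Hilbert schemes, no deformation theory of subschemes, and no rigidity of $\PP^r$, and it works verbatim for an arbitrary projective $k$-scheme $N_0$. Your route reconstructs $N$ as a moduli space of fibers, which is conceptually appealing, but it forces you to verify several auxiliary facts (smoothness of $\operatorname{Hilb}^P(M/R)$ along $j_0(N_0)$, openness of the $\PP^r$-fibration locus, the Henselian connected-component argument) that the Proj construction simply bypasses. The two proofs do share one key step: both use the hypothesis $\Hh^2(M_k,\sO_{M_k})=0$ only to lift line bundles from $M_k$ to $M$ via the square-zero obstruction theory plus Grothendieck existence.

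The delicate point you flagged is a genuine gap relative to the statement. For singular $N_0$ the total space $M_k = \PP_{N_0}(\sE_0)$ is singular along the fibers $Z$ of $\pi_0$, the normal sheaf of $Z$ in $M_k$ need not be locally free (let alone trivial), $\operatorname{Hilb}^P(M_k)$ need not be smooth at $[Z]$, and $j_0$ need not be an open immersion; consequently the component $N$ you select could be strictly larger than any flat deformation of $N_0$, and the subsequent identifications break down. Since the lemma is stated for an arbitrary projective $k$-scheme $N_0$, your proof establishes only the special cases with $N_0$ smooth (which, as you note, covers the two instances $N_0 = \Spec k$ and $N_0 = \PP^1_k$ actually used in the paper, so the downstream results are unaffected). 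If you want a proof of the lemma as stated, the paper's construction of $N$ as $\Proj \sectionRing{M}{\sL}$ is the cleaner fix. One minor point in your smooth case: when you conclude $\sE\restrict{N_0}\isom\sE_0$ from $\sE = q_*e^*\sL$, you are using cohomology and base change for $q$, which is fine but should be recorded, exactly as the paper does for $\pi\push\sA$.
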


\begin{proof}
  We note that, by \cite[Corollary 8.5.6]{FGAexplained05} and $\Hh^2(M_k, \sO_{M_k}) = 0$, any line bundle on $M_k$ has a lift to $M$.
  Take a sufficiently ample line bundle $\sO_{N_0}(1)$ on $N_0$ so that it is globally generated and $\Hh^1(N_0, \sO_{N_0}(i)) = 0$ for all $i \ge 1$.
  Set $\pi_0 \colon M_k \isom \PP_{N_0}(\sE_0) \to N_0$ to be the projection and set $\sL_0 \coloneqq \pi_0\pull \sO_{N_0}(1)$. Take a lift $\sL$ of $\sL_0$ to $M$.
  Since $\Hh^0(M, \sL) \otimes k \isom \Hh^0(M_k, \sL_0)$ by cohomology and base change, we see that $\sL$ is globally generated.
  Hence, we have a natural morphism
  \[
    \pi \colon M \to N \coloneqq \Proj \sectionRing{M}{\sL}\text.
  \]
  By cohomology and base change, we have $N_k \isom \Proj \sectionRing{M_k}{\sL_k} \isom N_0$.
  From the fiberwise criterion of flatness \citestacks{039B}, we deduce that $\pi$ is flat.
  Now, we take a lift $\sA$ of $\sO_{\PP_{N_0}(\sE_0)}(1)$ to $M$, and set $\sE \coloneqq \pi\push \sA$.
  By cohomology and base change, $\sE$ is a locally free sheaf on $N$. Furthermore, we claim that $\sE_k \isom \sE_0$ (through the identification $N_k \isom N_0$).
  For each closed point $y \in N$, we have the diagram
  \[
    \begin{tikzcd}[row sep = small]
      \sE \otimes \residuefield{y} \ar[r, "\isom"] & \Hh^0(\pi^\inv(y), \sA) \\
      \pi\push \sO_{\PP_{N_0}(\sE_0)}(1) \otimes \residuefield{y} \ar[u] \ar[r, "\isom"] & \Hh^0(\pi^\inv(y), \sO_{\PP_{N_0}(\sE_0)}(1)) \ar[u, "\isom" sloped]
    \end{tikzcd}
  \]
  by cohomology and base change. Hence, by Nakayama's lemma, we obtain the claim.
  Since $\sA$ is relatively ample over $N$, we have a natural morphism $\alpha \colon M \to \PP_N(\sE)$. Thus, $\alpha_k$ is an isomorphism, and hence $\alpha$ is also an isomorphism by \myref{lem:isomorphic-closed-fibers}.
\end{proof}

\begin{lemma}\label{lem:deform-blowup}
  Let $(R, \frakm, k)$ be a complete Noetherian local ring.
  Suppose that $M$ is a flat projective $R$-scheme and the closed fiber $M_k$ is isomorphic to the blowup $\Bl_{z_0}N_0$ of a projective $k$-scheme $N_0$ at a smooth point $z_0 \in N_0$.
  If $\Hh^2(M_k, \sO_{M_k}) = 0$, then there exist a flat projective $R$-scheme $N$ and a closed subscheme $Z$ of $N$ such that $Z \isom \Spec R$ and $M \isom \Bl_Z N$.
\end{lemma}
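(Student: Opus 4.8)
The plan is to adapt the construction in the proof of \myref{lem:deform-projective-bundle} to produce a contraction $\rho\colon M\to N$ deforming the blowdown $M_k\to N_0$, and then to reconstruct the blown‑up center. Write $\rho_0\colon M_k\to N_0$ for the given morphism, $E_0\subset M_k$ for its exceptional divisor, and $z_0\in N_0$ for the (reduced) center; I will assume $M_k$, hence $N_0$, is integral, the general case reducing to this by lifting connected components. I would fix a very ample line bundle $\sO_{N_0}(1)$ on $N_0$ with $\Hh^{>0}(N_0,\sO_{N_0}(m))=0$ for all $m\ge 1$ and set $\sL_0\coloneqq(\rho_0)\pull\sO_{N_0}(1)$. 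Since $\rho_0$ is the blowup of a smooth point, $R(\rho_0)\push\sO_{M_k}=\sO_{N_0}$, so $\Hh^{>0}(M_k,\sL_0\powotimes{m})=0$ and $\Hh^0(M_k,\sL_0\powotimes{m})=\Hh^0(N_0,\sO_{N_0}(m))$ for $m\ge 1$. Using $\Hh^2(M_k,\sO_{M_k})=0$ and \cite[Corollary 8.5.6]{FGAexplained05}, lift $\sL_0$ to a line bundle $\sL$ on $M$. Then, exactly as in the proof of \myref{lem:deform-projective-bundle} (cohomology and base change, Nakayama, and the fact that every nonempty closed subset of the $R$-proper scheme $M$ meets $M_k$), $\sL$ is globally generated, each $\Hh^0(M,\sL\powotimes{m})$ is $R$-free and compatible with base change, and $N\coloneqq\Proj\sectionRing{M}{\sL}$ is a flat projective $R$-scheme with $N_k\isom N_0$ and a morphism $\rho\colon M\to N$ restricting to $\rho_0$. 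Comparing intersection numbers in the flat family shows that $\rho$ is birational, and $\rho\push\sO_M=\sO_N$ by cohomology and base change along $\rho$.

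It remains to reconstruct the center and identify $M$ as a blowup. Since $M_k=\Bl_{z_0}N_0$ is smooth over $k$ near $E_0$ and $M$, $N$ are $R$-flat, $M$ is smooth over $R$ near $E_0$ and $N$ is smooth over $R$ near $z_0$; in particular $\rho$ contracts a unique prime divisor $E$, this $E$ is Cartier with $\sO_M(-E)$ $\rho$-ample, and $E$ is flat over $R$ with $E\cap M_k=E_0$. Hence $M$ is isomorphic over $N$ to the relative $\Proj$ of the graded $\sO_N$-algebra $\bigoplus_{m\ge 0}\mathfrak{J}_m$, where $\mathfrak{J}_m\coloneqq\rho\push\sO_M(-mE)\subseteq\rho\push\sO_M=\sO_N$. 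Now I would feed the blowup structure of the closed fiber into cohomology and base change along $\rho$: since $\sO_M$ is $R$-flat (so that forming $\rho\push$ commutes, after a derived base change over the Tor-independent square, with restriction to closed fibers), $R^{>0}(\rho_0)\push\sO_{M_k}(-mE_0)=0$ for all $m\ge 0$, and $\sO_{M_k}(-mE_0)=\sI_{z_0}^m\sO_{M_k}$ with $\sI_{z_0}^m$ integrally closed (as $z_0$ is a smooth point), each $\mathfrak{J}_m$ is $R$-flat with $\mathfrak{J}_m\otimes_R k=\sI_{z_0}^m\subseteq\sO_{N_0}$. Taking $m=1$, the subscheme $Z\coloneqq V(\mathfrak{J}_1)\subset N$ has $\sO_Z=\sO_N/\mathfrak{J}_1$ a finite flat $R$-algebra of rank $\dim_k(\sO_{N_0}/\sI_{z_0})=1$, so $Z\isom\Spec R$; it is the section of $N\to\Spec R$ through $z_0$, and $Z\into N$ is a regular immersion (a section of the smooth-over-$R$ locus). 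Finally $\mathfrak{J}_1^m\subseteq\mathfrak{J}_m$ and $(\mathfrak{J}_m/\mathfrak{J}_1^m)\otimes_R k=0$ (because $\mathfrak{J}_1^m\otimes_R k$ surjects onto $\sI_{z_0}^m=\mathfrak{J}_m\otimes_R k$), so $\mathfrak{J}_m=\mathfrak{J}_1^m$ by Nakayama on $N$. Therefore $M$ is the relative $\Proj$ of the Rees algebra of $\mathfrak{J}_1$, that is, $M\isom\Bl_Z N$ with $Z\isom\Spec R$.

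I expect the second paragraph to be the main obstacle, for two reasons: (i) verifying that $\rho$ contracts a unique prime divisor $E$, that $E$ is Cartier with $\sO_M(-E)$ relatively ample, and that $M$, $N$ are smooth over $R$ near $E_0$, $z_0$ (so that $E$ is $R$-flat and restricts to $E_0$); and (ii) the bookkeeping of cohomology and base change along the non-flat birational morphism $\rho$, via the Tor-independent square relating $\rho$ to $\rho_0$, which is what transports the explicit $\Proj$-of-Rees-algebra description of $M_k=\Bl_{z_0}N_0$ to the whole family and pins down the flat center $Z\isom\Spec R$. An alternative endgame is to produce $Z$ directly as a section of the smooth-over-$R$ locus of $N$ through the $k$-rational point $z_0$ (using that $R$ is Henselian), to note that $\Bl_Z$ commutes with base change along $\Spec k\to\Spec R$ because $Z\into N$ is a regular immersion with $Z$ flat over $R$, so that $(\Bl_Z N)_k\isom\Bl_{z_0}N_0\isom M_k$ compatibly with $\rho_0$, and then to conclude $M\isom\Bl_Z N$ from \myref{lem:isomorphic-closed-fibers}; but identifying the correct section still goes through the analysis in (ii).
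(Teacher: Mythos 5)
Your first paragraph reproduces the paper's construction of $N=\Proj\sectionRing{M}{\sL}$, but the second paragraph has a genuine gap at its central point: the existence of an effective Cartier divisor $E\subset M$ with $E_k\isom E_0$. You assert that, because $M$ and $N$ are smooth over $R$ near $E_0$ and $z_0$, the contraction $\rho$ ``contracts a unique prime divisor $E$'' that is Cartier, $R$-flat, and restricts to $E_0$. This does not follow from what you have established. A priori $\rho_\eta$ could be an isomorphism and the exceptional locus of $\rho$ could be exactly $E_0$, which has codimension $\ge 2$ in $M$; excluding this requires a purity statement for the exceptional locus of a proper birational morphism, which in turn needs $N$ to be locally ($\QQ$-)factorial near $z_0$. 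Since $R$ is only a complete Noetherian local ring, smoothness of $N$ over $R$ gives neither regularity nor factoriality of $N$, so this route is unavailable in the stated generality (and even over a DVR you would have to invoke and verify van der Waerden--type purity, which you do not). The paper instead produces $E$ by deformation theory: the obstruction to deforming the divisor $E_0\subset M_k$ through the infinitesimal thickenings of $M_k$ in $M$ lies in $\Hh^1(E_0,\sO_{M_k}(E_0)\restrict{E_0})$, which vanishes because $E_0$ is a projective space with normal bundle $\sO(-1)$, and the resulting formal divisor algebraizes since $R$ is complete. This is precisely the input your argument is missing, and you flag it as the main obstacle without resolving it.

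Granting $E$, your Rees-algebra endgame (flatness of $\rho\push\sO_M(-mE)$ and the identification with the $m$-th power of the center's ideal via Nakayama) is plausible but heavier than needed: the paper simply shows $\sI_E\otimes\sL$ is globally generated by cohomology and base change over $\Spec R$ (not along the non-flat morphism $\rho$), deduces $\sI_E=\rho^\inv\sI_Z\cdot\sO_M$ with $Z=\Proj\sectionRing{E}{\sL\restrict{E}}\isom\Spec R$, obtains $M\to\Bl_Z N$ from the universal property of blowups, and concludes with \myref{lem:isomorphic-closed-fibers}. Note also that your alternative endgame (take any section through $z_0$ and compare closed fibers) cannot be closed by \myref{lem:isomorphic-closed-fibers} alone: that lemma needs an actual morphism $M\to\Bl_Z N$ inducing the isomorphism of closed fibers, and producing one again requires knowing that $\sI_Z\cdot\sO_M$ is invertible, i.e.\ the existence of $E$ and the correct identification of $Z$.
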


\begin{proof}
  We note that by \cite[Corollary 8.5.6]{FGAexplained05} and $\Hh^2(M_k, \sO_{M_k}) = 0$, any line bundle on $M_k$ has a lift to $M$.
  Write $\pi_0 \colon M_k \isom \Bl_{z_0}N_0 \to N_0$ for the natural morphism.
  Let $\sO_{N_0}(1)$ be a sufficiently ample line bundle on $N_0$, and set $\sL_0 \coloneqq \pi_0\pull \sO_{N_0}(1)$.
  Take a lift $\sL$ of $\sL_0$ to $M$.
  There exists an effective Cartier divisor $E$ on $M$ such that $E_k \isom E_0$ because $\Hh^1(E_0, \sO_{M_k}(E_0)\restrict{E_0}) = 0$.
  By cohomology and base change, we have the diagram:
  \[
    \begin{tikzcd}[row sep = small]
      M \ar[r, "\pi"] & N \coloneqq \Proj \sectionRing{M}{\sL} \\
      E \ar[u, hook] \ar[r] & Z \coloneqq \Proj \sectionRing{E}{\sL\restrict{E}} \ar[u, hook] \isom \Spec R
    \end{tikzcd}\text{.}
  \]
  By construction, we have a line bundle $\sO_N(1)$ on $N$ such that $\pi\pull \sO_N(1) \isom \sL$.
  Let $\sI_E$ and $\sI_Z$ denote the defining ideals of $E$ and $Z$, respectively. By cohomology and base change, we deduce that $\sI_E \otimes \sL$ is globally generated.
  Similarly, we see that $\sI_Z \otimes \sO_{N}(1)$ is globally generated by $\Hh^0(\sI_E \otimes \sL) \subset \Hh^0(\sL) \isom \Hh^0(\sO_N(1))$.
  Thus we have $\sI_E = \pi^\inv \sI_Z$.
  Therefore, we have a natural morphism $\nu \colon M \to \Bl_Z N$ by the universal property of blowups.
  Since $\nu_k$ is an isomorphism, so is $\nu$ by \myref{lem:isomorphic-closed-fibers}.
\end{proof}

\begin{example}\label{exa:deformation-F2}
  Let $(R, \frakm, k)$ be a complete DVR.
  Let $X$, $Y$ be homogeneous coordinates on $\PP^1_R$.
  For each $f \in R$, we define the locally free sheaf $\sE^f$ of rank $2$ on $\PP^1_R$ by the transition function:
  \[
    \begin{tikzcd}[ampersand replacement=\&, row sep = small]
        \sO^{\oplus 2}_{D_+(XY)}
        \ar[rr, "{\begin{pmatrix} 1 & 0 \\ Yf/X & (Y/X)^2 \end{pmatrix}}"]
      \& \& \sO^{\oplus 2}_{D_+(XY)} \\
      \& \ar[ul, "\isom" sloped, "\varphi_X\restrict{D_+(XY)}"] \sE^f\restrict{D_+(XY)} \ar[ur, "\isom" sloped, "\varphi_Y\restrict{D_+(XY)}"'] \&
    \end{tikzcd}\text{,}
  \]
  where we write $D_+(h) \coloneqq (h \neq 0) \subset \PP^1_R$ for a homogeneous polynomial $h \in R[X,Y]$, and $\varphi_X$ and $\varphi_Y$ are local trivializations on $D_+(X)$ and $D_+(Y)$, respectively.
  For example, we have $\sE^0 \isom \sO_{\PP^1_R} \oplus \sO_{\PP^1_R}(-2)$.
  If $f \in \frakm$, then $M \coloneqq \PP_{\PP^1_R}(\sE^f)$ is a flat projective $R$-scheme with closed fiber $M_k \isom \rationalruled{2}$.
  Moreover, we have $\sE^f \otimes_R R/\frakm^i \isom \sO_{\PP^1_{R/\frakm^i}} \oplus \sO_{\PP^1_{R/\frakm^i}}(-2)$ if $f \in \frakm^i$.
\end{example}

\begin{proposition}[{cf.~\cite[Propositions~3.7, 3.6]{Scholl85}}]\label{prop:smooth-wdP}
  Let $(R, \frakm, k)$ be a complete DVR with $k$ algebraically closed, and set $\frakm = (t)$.
  Let $M$ be an integral smooth projective $R$-scheme.
  Suppose that $\omega_M^\inv$ is nef and big.
  Then $M$ is isomorphic to one of the following $R$-schemes:
  \begin{itemize}
    \item an iterated blowup of $\PP^2_R$ along at most eight $R$-valued points, or
    \item $\PP^1_R \times \PP^1_R$, or
    \item $\PP_{\PP^1_R}(\sE^{0})$ or $\PP_{\PP^1_R}\left(\sE^{t^i}\right)$ for $i \ge 1$, which are defined in \myref{exa:deformation-F2}.
  \end{itemize}
\end{proposition}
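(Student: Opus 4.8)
The plan is to descend to the closed fiber $M_k$, which will turn out to be a smooth weak del Pezzo surface over $k$, and then to lift the classification of such surfaces to $M$ by means of the deformation statements \myref{lem:deform-projective-bundle} and \myref{lem:deform-blowup}. First, since $M$ is integral, normal and proper over $R$, the ring $\Hh^0(M,\sO_M)$ is a normal local domain, finite over $R$ and of dimension one, hence a complete DVR with residue field $k$; replacing $R$ by it, we may assume $\Hh^0(M,\sO_M)=R$, so that $M_k$ is connected. As $M$ is smooth over $R$, the fiber $M_k$ is a smooth connected projective surface over $k$, and since it is the Cartier divisor on $M$ cut out by a uniformizer of $R$, its conormal bundle is trivial, so $\omega_{M_k}\isom\omega_M|_{M_k}$ by adjunction. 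Nefness and bigness of $\omega_M^\inv$ descend to $\omega_{M_k}^\inv$ (nefness by restriction; and $(\omega_{M_k}^\inv)^2=(\omega_{M_\eta}^\inv)^2>0$ by constancy of intersection numbers in the flat family $M\to\Spec R$ together with bigness of $\omega_M^\inv$ on its generic fiber), so $M_k$ is a smooth weak del Pezzo surface over $k$, for which $\Hh^2(M_k,\sO_{M_k})=\Hh^0(M_k,\omega_{M_k})=0$ since $\omega_{M_k}^\inv$ is big. By the classification recalled above, $M_k$ is therefore $\PP^1_k\times\PP^1_k$, $\rationalruled{2}$, or an iterated blowup of $\PP^2_k$ at $r\le 8$ points.

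Suppose $M_k$ is an iterated blowup of $\PP^2_k$ at $r$ points; I would induct on $r$. For $r=0$ we have $M_k\isom\PP_{\Spec k}(\sO_k^{\oplus 3})$, so \myref{lem:deform-projective-bundle} gives $M\isom\PP_N(\sE)$ with $N$ flat projective over $R$, $N_k\isom\Spec k$, and $\sE$ locally free of rank three on the fiber; then $N=\Spec R$ and $\sE\isom\sO_R^{\oplus 3}$, whence $M\isom\PP^2_R$. For $r\ge 1$, write $M_k=\Bl_{z_0}N_0$ with $N_0$ the iterated blowup of $\PP^2_k$ at $r-1$ of the points and $z_0$ a (necessarily smooth) point; \myref{lem:deform-blowup} then produces a flat projective $R$-scheme $N$ with $N_k\isom N_0$ and a section $Z\isom\Spec R$ of $N\to\Spec R$ such that $M\isom\Bl_Z N$. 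By the inductive hypothesis $N$, and hence $M$, is an iterated blowup of $\PP^2_R$ along at most $r\le 8$ $R$-valued points.

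Suppose instead $M_k=\PP^1_k\times\PP^1_k=\PP_{\PP^1_k}(\sO\oplus\sO)$ or $M_k=\rationalruled{2}=\PP_{\PP^1_k}(\sO\oplus\sO(-2))$. Using that $\PP^1_k$ admits a unique flat lift $\PP^1_R$ over $R$, \myref{lem:deform-projective-bundle} gives $M\isom\PP_{\PP^1_R}(\sE)$ for a rank-two bundle $\sE$ on $\PP^1_R$ with $\sE_k\isom\sO\oplus\sO$, resp.\ $\sO\oplus\sO(-2)$. By cohomology and base change, the projection $\sE_k\to\sO_{\PP^1_k}$ onto the first summand lifts to a map $\sE\to\sO_{\PP^1_R}$, which is surjective since its cokernel vanishes on $M_k$; its kernel is a line bundle on $\PP^1_R$ restricting to the other summand, hence $\isom\sO_{\PP^1_R}$, resp.\ $\sO_{\PP^1_R}(-2)$. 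Thus $\sE$ is an extension, classified by $\Hh^1(\PP^1_R,\sO_{\PP^1_R})=0$, resp.\ $\Hh^1(\PP^1_R,\sO_{\PP^1_R}(-2))\isom R$. In the first case $\sE\isom\sO^{\oplus 2}$, so $M\isom\PP^1_R\times\PP^1_R$. In the second, the extension class has $\frakm$-adic valuation $i\in\{1,2,\dots\}\cup\{\infty\}$ — it is not a unit, for otherwise $\sE_k$ would be the non-split extension $\sO_{\PP^1_k}(-1)^{\oplus 2}$ rather than $\sO\oplus\sO(-2)$ — and, since such an extension is determined up to isomorphism by the valuation of its class, comparison with \myref{exa:deformation-F2} yields $M\isom\PP_{\PP^1_R}(\sE^{0})$ if $i=\infty$ and $M\isom\PP_{\PP^1_R}(\sE^{t^i})$ if $i<\infty$.

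The step I expect to be the main obstacle is the $\rationalruled{2}$ case: one must check that $\sE^{t^i}$ in \myref{exa:deformation-F2} corresponds to an extension class of valuation exactly $i$, and that every rank-two bundle on $\PP^1_R$ arising as an extension of valuation $i$ is isomorphic to it — this requires an explicit computation with the transition function and the observation that these extensions are classified by the valuation alone. The reduction to $\Hh^0(M,\sO_M)=R$ and the descent of nefness and bigness of $\omega_M^\inv$ to $M_k$ are routine, but should be written out carefully.
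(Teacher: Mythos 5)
Your argument is correct and follows the paper's overall strategy (restrict to the closed fiber, identify it as a smooth weak del Pezzo surface, then lift the classification via \myref{lem:deform-projective-bundle} and \myref{lem:deform-blowup}), but it handles the decisive $\rationalruled{2}$ case by a genuinely different route. The paper applies Grothendieck's existence theorem to reduce to compatible systems of bundles on $\PP^1_{R/\frakm^{n+1}}$ and then uses deformation theory order by order: the lifts of $\sE^{f_n}\otimes R_n$ form a torsor under $\Hh^1(\PP^1_{R_n}, I_{n+1}\otimes\sEnd)\isom k$, and a \v{C}ech computation shows the action is $g\cdot[\sE^{f_n}]=[\sE^{f_n+g}]$, so every lift stays in the family $\sE^f$. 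You instead work over $R$ in one step: lift the projection $\sE_k\to\sO$ (legitimate, since $\Hh^1(\PP^1_k,\sE_k^\vee)=0$ makes $\pi_*\sE^\vee$ commute with base change), realize $\sE$ as an extension of $\sO$ by $\sO(-2)$, and read off the answer from $\Ext^1(\sO,\sO(-2))\isom\Hh^1(\PP^1_R,\sO(-2))\isom R$ modulo the unit action, i.e.\ from the valuation of the class. The point you flag as the main obstacle is indeed the only computation needed, and it works: in the trivializations of \myref{exa:deformation-F2} the two local splittings of $\sE^f\to\sO$ differ on $D_+(XY)$ by the cocycle $fX^{-1}Y^{-1}$, and $X^{-1}Y^{-1}$ generates $\Hh^1(\PP^1_R,\sO(-2))$, so $\sE^{t^i}$ has class of valuation exactly $i$; since unit rescaling of the class does not change the middle term, every class of valuation $i$ gives $\sE^{t^i}$. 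Your route is arguably cleaner and avoids the algebraization step; the paper's route has the advantage of not requiring one to exhibit an extension structure and generalizes more readily to bundles with larger $\sEnd$-cohomology. Two small points to tidy up: (i) rather than ``replacing $R$ by $\Hh^0(M,\sO_M)$'' (which would change the meaning of the conclusion), note that $\Hh^0(M,\sO_M)$ is a complete DVR finite over $R$ with residue field $k$ and ramification index $1$ (reducedness of $M_k$ forces this), hence equals $R$; (ii) in the blowup induction, the inductive hypothesis is applied to $N$, so either check that $\omega_N^{-1}$ is again nef and big (it is: $N_0$ is a weak del Pezzo and nefness passes from the special to the generic fiber), or induct on the statement that any flat lift of an iterated blowup of $\PP^2_k$ is an iterated blowup of $\PP^2_R$, which is what \myref{lem:deform-blowup} actually delivers.
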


\begin{proof}
  First, note that $M_k$ is a smooth weak del Pezzo surface since $-\Kk_{M_k}$ is nef and $(-\Kk_{M_k})\powndot{2} > 0$.

  If $M_k$ is isomorphic to $\PP^2_k$, then by \myref{lem:deform-projective-bundle}, $M$ itself is isomorphic to $\PP^2_R$.
  Next, suppose that $M_k$ is an iterated blowup of $\PP^2_k$.
  Then, by \myref{lem:deform-blowup}, $M$ itself is isomorphic to an iterated blowup of $\PP^2_R$ at $R$-valued points.

  Now consider the case where $M_k$ is isomorphic to $\rationalruled{2}$.
  We have $M \isom \PP_{\PP^1_R}(\sE)$ for some locally free sheaf $\sE$ such that $\sE_k \isom \sO_{\PP^1_R} \oplus \sO_{\PP^1_R}(-2)$ by \myref{lem:deform-projective-bundle}.
  We claim that $\sE \isom \sE^f$ for some $f \in \frakm$.
  By Grothendieck's existence theorem \cite[Theorem 8.4.2]{FGAexplained05} (together with \cite[8.1.4]{FGAexplained05}), the locally free sheaves on $\PP^1_R$ correspond bijectively with the compatible systems of locally free sheaves $E_n$ on $\PP^1_{R_n} = \PP^1 \otimes R_n$, where we set $R_n \coloneqq R/\frakm^{n+1}$ for $n \ge 0$.
  Fix $n \ge 0$ and assume $E_n$ is of the form $\sE^{f_n} \otimes R_n$. We show that any lift $E_{n + 1}$ of $E_n$ is also of the form $\sE^{f_{n+1}} \otimes R_{n + 1}$.
  Set $I_{n+1} \coloneqq \frakm^{n+1} / \frakm^{n+2} \subset R_{n + 1}$.
  By \cite[Theorem 8.5.3]{FGAexplained05}, the set of deformations
  \[
    \setbraces*{(E_{n+1}, \alpha \colon E_{n+1} \to E_n) \text{ such that $\alpha \otimes R_n$ is an isomorphism}}
    / {\isom}
  \]
  is an affine space under $\Hh^1(M \otimes R_n, I_{n+1} \otimes \sEnd(E_n)) \isom I_{n+1} \isom k$. The action of $g \in I_{n+1}$ can be computed via \v{C}ech cohomology, and we have
  \[
    g \cdot [\sE^{f_n} \otimes R_{n + 1}] \isom [\sE^{f_n + g} \otimes R_{n + 1}]\text{.}
  \]
  Thus, any lift $E_{n + 1}$ of $E_n$ is also of the form $\sE^{f_{n+1}} \otimes R_{n + 1}$. This proves the case $M_k \isom \rationalruled{2}$.
  The case of $\PP^1_k \times \PP^1_k$ is similar and easier.
\end{proof}


\section{Gorenstein del Pezzo surfaces over fields}\label{sec:qFs-dP-over-fields}
We describe which Gorenstein del Pezzo surfaces are quasi-$F$-split.
Throughout this section, fix an algebraically closed field $k$ of characteristic $p>0$.

First, we recall the situation for RDP del Pezzo surfaces.
\begin{definition}\label{def:log-lift}
  An RDP del Pezzo surface $X$ over $k$ is \emph{$\W(k)$-log liftable} if there exists a log resolution $(Y, E_0) \rightarrow X$ and a projective log-smooth pair $(N, E)$ over $\W(k)$ such that $(N_k, E_k) \simeq (Y, E_0)$.
  Here, a projective \emph{log-smooth pair} is, by definition, a pair $(N, E)$ where $N$ is a smooth projective scheme over $\W(k)$ and $E \subset N$ is a relatively SNC divisor.
\end{definition}

\begin{remark}
  Let $X$ be an RDP del Pezzo surface. Then $X$ is quasi-$F$-split if and only if $X$ is $\W(k)$-log liftable by~\cite[Theorem 6.3]{KTTWYY:qFs1}.
  Non-$\W(k)$-log liftable RDP del Pezzo surfaces are classified by~\cite[Theorem 1.7 (1)]{KawakamiNagaoka22:Pathologies} and~\cite[Propositions 3.14, 3.15, and 3.23]{KawakamiNagaoka23:Classification}.
  Thus, non-quasi-$F$-split RDP del Pezzo surfaces are explicitly described, and this observation plays an crucial role in the proof of \cite[Theorem~B]{OnukiTakamatsuYoshikawa25v2}.
\end{remark}

Next, we treat normal but non-RDP Gorenstein del Pezzo surfaces.
\begin{lemma}\label{lem:qFs-elliptic-cone}
  Let $X$ be a normal Gorenstein del Pezzo surface that is not an RDP del Pezzo surface. Then $X$ is quasi-$F$-split.
\end{lemma}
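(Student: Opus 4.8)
The plan is to combine the classification of normal Gorenstein del Pezzo surfaces with the quasi-$F$-splitting of elliptic curves. Since $X$ is not an RDP del Pezzo surface, \myref{thm:normal-dP-classification} gives an elliptic curve $E$ over $k$ and an ample line bundle $\sA$ on $E$ with $X \isom \Proj \sectionRing{E}{\sA}[t]$; thus $X$ is the projective cone over $(E, \sA)$, and (as recorded in the proof of \myref{thm:normal-dP-classification}) its anticanonical section ring is $\sectionRing{X}{\omega_X^\inv} \isom \sectionRing{E}{\sA}[t]$. First I would transfer the problem to this section ring: by the same kind of argument as in the lemma comparing $\bmplus$-regularity of $S_{\frakm_S}$ with that of $M$ — with $F$-splitting and quasi-$F$-splitting in place of $\bmplus$-regularity, and using that the degree-$0$ component of a graded splitting is again a splitting and sheafifies on $\Proj$ — it is enough to show that $S \coloneqq \sectionRing{E}{\sA}[t]$, localized at the irrelevant maximal ideal $\frakm_S$, is quasi-$F$-split.

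Next I would strip off the polynomial variable: since $S = \sectionRing{E}{\sA}\otimes_k k[t]$ and quasi-$F$-splitting is stable under adjoining a polynomial variable, it suffices to prove that the affine cone $A \coloneqq \sectionRing{E}{\sA}$ is quasi-$F$-split at its vertex (every other point of $\Spec A$ is regular, since the punctured cone is smooth over $E$). Now $A$ is a two-dimensional normal graded ring which is Gorenstein with $\omega_A \isom A$, because $\omega_E \isom \sO_E$; in particular the socle of $\Hh^2_{\frakm}(A)$ sits in degree $0$, where it equals $\Hh^1(E, \sO_E)$. More generally, the obstruction to $n$-quasi-$F$-splitting of $A_{\frakm}$ lies in the degree-$0$ part of the Witt-vector local cohomology of the cone, which by a graded computation is $\Hh^1(E, \W_n\sO_E)$, and there the relevant Frobenius is the $n$-th Hasse--Witt operator of $E$. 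Since $E$ is an elliptic curve, this operator is injective for $n = 1$ when $E$ is ordinary and for $n = 2$ when $E$ is supersingular (the Artin--Mazur formal group $\widehat{E}$ has height at most $2$); equivalently $E$ is $F$-split, respectively $2$-quasi-$F$-split, by Yobuko's theory. It follows that $A_{\frakm}$ — hence $S_{\frakm_S}$, and hence $X$ — is quasi-$F$-split, in fact $2$-quasi-$F$-split.

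I expect the main obstacle to be the last transfer, from $E$ to its affine cone $A = \sectionRing{E}{\sA}$. Unlike ordinary Frobenius splitting, quasi-$F$-splitting of a cone is a priori sensitive to the groups $\Hh^1(E, \sA^{-m})$ for $m > 0$, which are nonzero here, so one cannot simply invoke a ``cone over an $F$-split variety'' statement; the point to make precise is that the genuine obstruction is concentrated in degree $0$, where it is controlled purely by $\Hh^1(E, \W_\bullet\sO_E)$, and that the ellipticity of $E$ — equivalently, the boundedness of the height of $\widehat{E}$ — is exactly what kills it at level two. By contrast, the two preliminary reductions (from $X$ to the section ring, and from $\sectionRing{E}{\sA}[t]$ to $\sectionRing{E}{\sA}$) are routine, though they should be stated with some care.
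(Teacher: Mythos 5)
Your route is essentially the paper's: classification via \myref{thm:normal-dP-classification}, reduction to the section ring $\sectionRing{E}{\sA}[t]$, stripping the polynomial variable (the paper cites \cite[Theorem 1.3]{Yobuko23} for exactly this), and finally the quasi-$F$-splitting of the cone over $E$ via the height $\le 2$ of the formal group of an elliptic curve and Yobuko's characterization. The two ``routine'' transfers you describe (between $X$ and its section ring, and the cone step itself) are precisely the points the paper does \emph{not} prove by hand but outsources to \cite[Theorem 7.16]{KTTWYY:qFs2} together with the Fedder-type criterion of \cite[Proposition 2.11]{KawakamiTakamatsuYoshikawa25:Fedder}.

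The one place where you genuinely deviate is in attempting the cone step directly, and there your argument has a real (and self-acknowledged) gap: the assertion that the obstruction to $n$-quasi-$F$-splitting of $A = \sectionRing{E}{\sA}$ ``lies in the degree-$0$ part'' is true but does not follow from a formal graded computation. The dual obstruction lives in all of $\Hh^2_\frakm(A) = \bigoplus_{m \le 0}\Hh^1(E,\sA^m)$, and the summands with $m<0$ are nonzero; one must actually show that Frobenius is already injective on $\Hh^1(E,\sA^m)$ for $m<0$ (so that only the degree-$0$ piece, i.e.\ the Hasse--Witt operator on $\Hh^1(E,\sO_E)$, survives as an obstruction and is then killed at level $2$ in the supersingular case). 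That injectivity holds, e.g.\ because $B^1_E = F\push\sO_E/\sO_E$ is semistable of degree $0$ on an elliptic curve, so $\Hh^0(E,\sA^m\otimes B^1_E)=0$ for $m<0$; but some such input is needed, and it is exactly what the cited cone/Fedder-type results encapsulate. If you supply that argument (and note that injectivity of $\Hh^2_\frakm(\Phi_{A,1})$ on a graded piece forces injectivity of $\Hh^2_\frakm(\Phi_{A,n})$ there, via $R^{n-1}\colon Q_{A,n}\to Q_{A,1}$), your proof closes up; as written, the crucial step is asserted rather than proved.
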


\begin{proof}
  By \myref{thm:normal-dP-classification}, $\sectionRing{X}{\omega_X^\inv} \isom \sectionRing{E}{\sA}[t]$ for an ample line bundle $\sA$ on an elliptic curve $E$.
  $E$ is quasi-$F$-split by~\cite[Corollary 7.5]{Silverman09:AEC} together with the characterization of quasi-$F$-splitting via formal groups~\cite[Theorem 4.5]{Yobuko19}.
  From~\cite[Theorem 7.16]{KTTWYY:qFs2} combined with~\cite[Proposition 2.11]{KawakamiTakamatsuYoshikawa25:Fedder}, we see that $\sectionRing{E}{\sA}$ is quasi-$F$-split.
  By~\cite[Theorem 1.3]{Yobuko23}, it follows that $\sectionRing{E}{\sA}[t]$ is quasi-$F$-split.
  Using~\cite[Theorem 7.16]{KTTWYY:qFs2} and~\cite[Proposition 2.11]{KawakamiTakamatsuYoshikawa25:Fedder} once again, we conclude that $X$ is quasi-$F$-split.
\end{proof}

We now study nonnormal Gorenstein del Pezzo surfaces.

\begin{lemma}\label{lem:toricity-of-normalization}
  Let $X$ be a nonnormal Gorenstein del Pezzo surface and let $(Y_i, \BY_i)$ be a connected component of the normalization of $X$.
  Then one of the following holds:
  \begin{enumerate}
    \item $Y_i$ is a toric variety and $\BY_i$ is a toric divisor on $Y_i$, or
    \item $Y_i = \PP^2$ and $\BY_i$ is a smooth conic in $\PP^2$, or
    \item $Y_i = \PP^1 \times \PP^1$ and $\BY_i$ is a smooth curve in $\abs{\sO_{\PP^1 \times \PP^1}(1, 1)}$.
  \end{enumerate}
\end{lemma}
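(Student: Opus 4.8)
The plan is to argue by going through the five rows (a)--(e) of Reid's table (\myref{thm:normalization-dP-classification}) one at a time. In every row the surface $Y_i$ is $\PP^2$, a weighted projective plane $\PP(1,1,d)$, or a Hirzebruch surface $\rationalruled{a}$, all of which are toric; so in each row I only have to decide whether, for a suitable choice of toric structure on $Y_i$, the conductor divisor $\BY_i$ is supported on the torus-invariant locus, and to identify the two rows that instead produce (2) and (3). (Recall from \myref{rmk:Reid-table} that $\sO_{Y_i}(1) \ndot \BY_i = 2$, so $\BY_i$ is a conic in the embedding $Y_i \into \PP^{d_i+1}$ determined by $\sO_{Y_i}(1)$.)

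First I would dispose of the rows landing in (1) or (2) directly. In case (b), $\BY_i \in \abs{\sO_{\PP^2}(1)}$, so it is a line in $Y_i = \PP^2$, hence a coordinate line for a suitable toric structure: this gives (1). (Note that although $\BY_i$ is here abstractly a smooth conic, it is a \emph{line} in $\PP^2$, not a smooth conic in $\PP^2$, so we are not in (2).) In case (a), $\BY_i \in \abs{\sO_{\PP^2}(2)}$ is a conic in $\PP^2$: a smooth member gives (2), while a line pair or a double line is supported on at most two lines, which we may take to be coordinate lines, so $\BY_i$ is torus-invariant and we are in (1). In case (c), for $d \ge 3$ every section of $\sO_{\PP(1,1,d)}(2)$ is a binary quadratic in the weight-one variables, so $\BY_i$ is supported on at most two lines through the vertex, which after a linear change of the weight-one coordinates become coordinate lines; for $d = 2$ the non-smooth members of $\abs{\sO_{\PP(1,1,2)}(2)}$ are again of this form, while a smooth member becomes, after a graded automorphism $z \mapsto z + q(x,y)$ of $\PP(1,1,2)$, the torus-invariant divisor $(z = 0)$. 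So case (c) always gives (1). In case (e), $\BY_i \in \abs{\sO_{\rationalruled{d-4}}(E)}$ with $E$ the negative section: if $d \ge 5$ then $E\powndot{2} < 0$ forces $\BY_i = E$, which is torus-invariant; if $d = 4$ then $\rationalruled{0} \isom \PP^1 \times \PP^1$ and $\BY_i$ is a ruling, torus-invariant for a suitable structure. So case (e) gives (1).

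The remaining row (d) is the delicate one, and is where I expect the only genuine work to lie. Here $Y_i = \rationalruled{d-2}$ and $\BY_i \in \abs{\sO_{\rationalruled{d-2}}(F+E)}$. For $d \ge 4$, a cohomology computation on the Hirzebruch surface gives $h^0(\sO_{\rationalruled{d-2}}(F+E)) = 2$, whence $\abs{F+E} = E + \abs{F}$ and $\BY_i = E + F'$ for a fiber $F'$, which is torus-invariant once the structure is chosen so that $F'$ is an invariant fiber: case (1). For $d = 2$ we have $\rationalruled{0} \isom \PP^1 \times \PP^1$ and $\abs{F+E} = \abs{\sO_{\PP^1 \times \PP^1}(1,1)}$; here a reducible member is a union of two rulings, hence torus-invariant and in (1), while a smooth member is precisely case (3). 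The subtle subcase is $d = 3$: then $\rationalruled{1} \isom \Bl_q \PP^2$ with $E$ the exceptional curve, and writing $\pi \colon \Bl_q \PP^2 \to \PP^2$ one has $F + E \equivlinearly \pi\pull L_0$ for a line $L_0 \subset \PP^2$; hence a reducible member of $\abs{F+E}$ is again of the form $E + F'$ (giving (1)), while a smooth member is $\pi\pull L$ for a line $L$ with $q \notin L$. To see that $\pi\pull L$ is torus-invariant for some toric structure, I would fix the toric structure on $\PP^2$ with $q$ torus-fixed and use that the stabilizer of $q$ in $\mathrm{PGL}_3$ acts transitively on lines not through $q$; the automorphism of $\PP^2$ carrying $L$ to a coordinate line fixes $q$, hence descends to an automorphism of $\Bl_q \PP^2$ carrying $\pi\pull L$ to a torus-invariant divisor, so $\pi\pull L$ is torus-invariant for the transported structure — case (1). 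Assembling the subcases of (d) with the other four rows finishes the proof; the only real content is the bookkeeping in case (d) and the observation that smooth conics in $\PP^2$ (from (a)) and smooth $(1,1)$-curves on $\PP^1 \times \PP^1$ (from (d) with $d = 2$) are the only conductor divisors that cannot be made torus-invariant, which is precisely why alternatives (2) and (3) are needed.
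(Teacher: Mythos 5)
Your proposal is correct and is exactly the paper's approach: the paper's proof consists of the single line ``This follows by inspecting \myref{thm:normalization-dP-classification},'' and your case-by-case analysis of rows (a)--(e) is precisely that inspection carried out in full. The details you supply (in particular the $h^0$ computations forcing $\BY_i \supset E$ when $(F+E)\ndot E<0$ or $E\powndot{2}<0$, and the automorphism argument in case (d) with $d=3$) are all accurate.
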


\begin{proof}
  This follows by inspecting \myref{thm:normalization-dP-classification}.
\end{proof}

\begin{corollary}\label{cor:gFs-of-normalization}
  Let $(Y_i, \BY_i)$ be as in \myref{lem:toricity-of-normalization}.
  Then $(Y_i, \BY_i)$ is globally $F$-split unless $\BY_i$ is a double line.
\end{corollary}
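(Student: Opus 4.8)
The plan is to go through the three cases of Lemma \ref{lem:toricity-of-normalization} and, in each, exhibit the pair $(Y_i,\BY_i)$ as a globally $F$-split pair by producing an explicit splitting, using that we are over an algebraically closed (hence $F$-finite, perfect) field of characteristic $p>2$. First I would treat case (1): $Y_i$ is toric and $\BY_i$ is a reduced toric divisor, so $(Y_i,\BY_i)$ is globally $F$-split because the toric Frobenius splitting compatibly splits every torus-invariant prime divisor (this is the classical fact that a toric variety together with the sum of its invariant divisors is Frobenius split, e.g.\ via the monomial $\prod x_i^{p-1}$ at the level of local charts, glued using the fan structure). If $\BY_i$ happens not to be the \emph{full} reduced toric boundary but only part of it, one still gets a splitting since splitting with respect to a larger effective divisor implies splitting with respect to a smaller one. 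The only issue is a non-reduced toric $\BY_i$, which by Reid's Table \ref{tab:normalization} is exactly the ``double line'' situation, and that is precisely what we are excluding.

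Next I would handle case (2), $Y_i=\PP^2$ with $\BY_i$ a smooth conic $(q=0)$: here $-\Kk_{\PP^2}\sim \sO(3)$ and $\BY_i\in\abs{\sO(2)}$, so $\Kk_{\PP^2}+\tfrac{?}{}\BY_i$ needs care — actually we want the pair $(\PP^2,\BY_i)$ with $\BY_i$ reduced, and since $\deg \BY_i=2<3$, a splitting exists: concretely, a general element $s\in\Hh^0(\PP^2,\sO((p-1)(-\Kk)-\BY_i))=\Hh^0(\PP^2,\sO(3(p-1)-2))$ multiplied by $q^{p-1}$ (viewed appropriately) gives a section of $\sO((p-1)(-\Kk))$ whose associated map $F_*\sO((p-1)(-\Kk))\to\sO$ splits $\sO\to F_*\sO$ compatibly along $\BY_i$; equivalently, one invokes that $(\PP^2,\Delta)$ is globally $F$-split for any reduced $\Delta$ of degree $\le 3$ by the Fedder-type criterion, and a smooth conic has degree $2$. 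Similarly for case (3), $Y_i=\PP^1\times\PP^1$ with $\BY_i$ smooth of bidegree $(1,1)$: since $-\Kk=\sO(2,2)$ and $(1,1)\le(2,2)$ coordinatewise with room to spare, the pair $(\PP^1\times\PP^1,\BY_i)$ is globally $F$-split by the analogous Fedder/Künneth computation.

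The main obstacle I anticipate is bookkeeping rather than conceptual: making sure that in case (1) ``toric divisor'' in the statement of Lemma \ref{lem:toricity-of-normalization} is interpreted as a \emph{reduced} torus-invariant divisor (so that the standard toric Frobenius splitting applies), and checking that the rational ruled surfaces $\rationalruled{a}$ and weighted projective spaces $\PP(1,1,d)$ appearing in Reid's table really are toric with $\BY_i$ reduced toric in all sub-cases except the double-line ones — for instance in case (c) with $\BY_i$ a line pair, $\BY_i$ is $\sO_{\PP(1,1,d)}(2)$ which is the sum of two coordinate lines, hence reduced toric, whereas the double line is the non-reduced scheme $2\cdot(\text{coordinate line})$. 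Once that identification is in place, each case reduces to the cited facts (toric Frobenius splitting; global $F$-splitness of $(\PP^2,\text{conic})$ and $(\PP^1\times\PP^1,(1,1)$-curve$)$), and the proof is complete; I would present it as a short case analysis with a pointer to \cite{MillerSchwede12} or \cite[Remark 9.5]{Schwede09:F-adjunction} for the divisor--splitting correspondence and to a standard reference for toric Frobenius splitting.
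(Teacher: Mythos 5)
Your proposal is correct and follows essentially the same route as the paper: case (1) is handled by the standard Frobenius splitting of a toric variety compatibly with its reduced invariant boundary (the paper cites Hara--Watanabe for this), and cases (2) and (3) by a direct Fedder-type computation for the pairs $(\PP^2,\text{smooth conic})$ and $(\PP^1\times\PP^1,(1,1)\text{-curve})$. The only caveat is your aside that $(\PP^2,\Delta)$ is globally $F$-split for \emph{any} reduced $\Delta$ of degree $\le 3$ (false for a supersingular smooth cubic), but this is not needed since your explicit computation for the degree-$2$ conic is what the argument actually uses.
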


\begin{proof}
  By assumption, $\BY_i$ is a reduced curve.
  Hence, in the case of (1), the assertion is well known (see \cite[Corollary 2.5]{HaraWatanabe02}).
  In the cases of (2) and (3), the assertion follows from a direct computation using Fedder's criteria.
\end{proof}

\begin{proposition}\label{prop:gFs-of-nonnormal}
  Let $X$ be a tame nonnormal Gorenstein del Pezzo surface.
  We use the notation of \myref{nota:nonnormal-dP}: let $\nu \colon Y \to X$ be the normalization of $X$, and set $C \subset X$ and $\BY \subset Y$ to be the closed subschemes defined by the conductor ideals.
  Then $X$ is globally $F$-split, except in the following cases:
  \begin{enumerate}[label=(\thetheorem.\arabic*), ref=\thetheorem.\arabic*]
    \item\label{item:double-line-case} All connected components of $\BY$ are double lines.
    \item\label{item:inseparable-case} $X$ is irreducible, $\BY$ is a smooth conic, $p = 2$ and $\nu\restrict{\BY} \colon \BY \to C$ is an inseparable double cover.
  \end{enumerate}
\end{proposition}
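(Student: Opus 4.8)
The plan is to deduce the global $F$-splitting of $X$ from that of the pair $(Y,\BY)$ by means of the Miller--Schwede criterion \myref{prop:MillerSchwede-global}, and to extract everything else from Reid's classification. Since $X$ is Gorenstein (hence $\ZZ_{(p)}$-Gorenstein, \SerreG{1} and \SerreS{2}), reduced, and projective over the algebraically closed---thus $F$-finite---field $k$, the standing hypotheses of \myref{prop:MillerSchwede-global} are satisfied, so it is enough to show that $X$ has hereditary surjective trace and that $(Y,\BY)$ is globally $F$-split. The latter I would obtain immediately: with $Y=\coprod_i Y_i$ and $\BY=\coprod_i\BY_i$ as in \myref{nota:nonnormal-dP}, the pair $(Y,\BY)$ is globally $F$-split iff each $(Y_i,\BY_i)$ is; and since $X$ is not of case~\myref{item:double-line-case}, not every $\BY_i$ is a double line, so---the $\BY_i$ being all isomorphic by \myref{thm:reducible-dP-classification}, which is vacuous when $X$ is irreducible---none of them is a double line, whence each $(Y_i,\BY_i)$ is globally $F$-split by \myref{cor:gFs-of-normalization}.

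For hereditary surjective trace I would appeal to the sufficient condition of \myref{rmk:hereditary-surjective-trace}: every component $C_\lambda$ of $C\redn$ should be normal, and for every component $\BY_\lambda$ of $\BY\redn$ dominating $C_\lambda$ the trace $\nu\push\sO_{\BY_\lambda\normaln}\to\sO_{C_\lambda}$ should be surjective. In the reducible case, \myref{rmk:remark-on-reducible} says each $\BY_i\to C\cap X_i$ is an isomorphism, so every component of $\BY\redn$ is carried \emph{isomorphically} onto a component of $C\redn$ and the corresponding trace is an isomorphism; moreover, running through \myref{exa:Reid-reducible} (and discarding case~\myref{item:double-line-case}) one checks that every component of $C\redn$ is a line or a smooth conic, hence normal. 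In the irreducible case, \myref{thm:irreducible-dP-classification} gives $C\isom\PP^1$---in particular normal---and realizes $\nu\restrict{\BY}$ as the linear projection of the plane conic $\BY$ onto the line $C$: if $\BY$ is a line pair, each of its two components maps isomorphically onto $C$, so again the relevant traces are isomorphisms; the double-line case is exactly case~\myref{item:double-line-case}; and if $\BY$ is a smooth conic then $\BY\normaln=\BY\isom\PP^1\to C\isom\PP^1$ is finite of degree $2$.

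Thus everything reduces to this degree-$2$ cover of a conic. When $p\neq2$, $\Tr(1)=2$ is a unit, so $\tfrac{1}{2}\Tr$ splits $\sO_C\to\nu\push\sO_{\BY}$, the trace is surjective, and we are done; this already disposes of all $X$ in characteristic $p>2$. When $p=2$, the cover $\nu\restrict{\BY}$ is either inseparable---which is exactly the excluded case~\myref{item:inseparable-case}, and there $\Tr$ vanishes identically, so the criterion genuinely fails---or separable, and this separable subcase is the one real obstacle: the trace $\nu\push\sO_{\BY}\to\sO_C$ is surjective away from the unique wild branch point but not at it, so \myref{rmk:hereditary-surjective-trace} no longer applies. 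In that subcase I would either unwind the general definition of hereditary surjective trace \cite[Definition~3.5]{MillerSchwede12}---what is really needed is that $C\isom\PP^1$ is normal and that the cover is generically \'etale---or else verify global $F$-splitting of $X$ directly by a Fedder-type computation on an explicit model, namely a weighted hypersurface of degree $6$ in $\PP(1,1,2,3)$ or of degree $4$ in $\PP(1,1,1,2)$ when $d\le2$, and a linear projection of $Y$ when $d\ge3$ (\myref{exa:Reid-irreducible}). Granting that, \myref{prop:MillerSchwede-global} yields that $X$ is globally $F$-split whenever it is neither of case~\myref{item:double-line-case} nor of case~\myref{item:inseparable-case}.
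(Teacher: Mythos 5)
Your strategy is exactly the paper's: reduce to the pair $(Y,\BY)$ via \myref{prop:MillerSchwede-global}, get global $F$-splitting of $(Y,\BY)$ from \myref{cor:gFs-of-normalization} once case \myref{item:double-line-case} is excluded, and verify hereditary surjective trace from the classification once case \myref{item:inseparable-case} is excluded. Your verification of the trace condition is in fact more careful than the paper's, which simply asserts that hereditary surjective trace follows from the classification theorems. Everything you say is correct in the reducible case and in the irreducible line-pair case (the relevant maps $\BY_\lambda \to C_\lambda$ are isomorphisms), and in the irreducible smooth-conic case for $p \neq 2$ (where $\Tr(1)=2$ is a unit).

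The one place you stop short --- $p=2$, $\BY$ a smooth conic, $\nu\restrict{\BY}$ separable --- is a genuine issue, and you have diagnosed it correctly: a separable degree-$2$ cover $\PP^1 \to \PP^1$ in characteristic $2$ is wildly ramified at its unique branch point, the image of the trace map there is a proper ideal, and so the sufficient condition of \myref{rmk:hereditary-surjective-trace} does not apply. Since you only say ``I would either unwind the general definition \dots or else verify directly by a Fedder-type computation'' and then proceed by ``granting that,'' this subcase is not actually proved in your write-up; note, though, that the paper's own proof elides precisely the same point, so this is as much a remark on the paper as on you. Either of your proposed repairs is viable --- for instance, the local model of $X$ at the wild branch point is the hypersurface $w^2+uvw+uv^2=0$, and $uvw \notin (u^2,v^2,w^2)$ shows $F$-purity there by Fedder, though one still has to globalize --- but one of them must actually be carried out for the proposition to hold as stated in characteristic $2$. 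For the application to \myref{main:globally-plus-regular} this does not matter, since \myref{cor:gFs-irreducible} assumes $p>2$ and the reducible case never invokes the smooth-conic double cover.
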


\begin{proof}
  Assume the negations of \myref{item:double-line-case} and \myref{item:inseparable-case}.
  Since \myref{item:inseparable-case} does not hold, from \myref{thm:irreducible-dP-classification} and \ref{thm:reducible-dP-classification}, we see that $X$ has hereditary surjective trace. Thus we can apply \myref{prop:MillerSchwede-global}.
  Since \myref{item:double-line-case} does not hold, \myref{cor:gFs-of-normalization} shows that $(Y, B)$ is globally $F$-split, and hence $X$ is globally $F$-split.
\end{proof}

Although the following proposition is not needed later, we include it here for completeness.

\begin{proposition}\label{prop:qFs-of-nonnormal-dP}
  Let $X$ be a nonnormal Gorenstein del Pezzo surface.
  \begin{enumerate}
    \item $X$ is not seminormal if and only if $X$ is not tame or \myref{item:double-line-case} holds.
    \item $X$ is seminormal but not weakly normal if and only if \myref{item:inseparable-case} holds.
    \item $X$ is weakly normal if and only if it is globally $F$-split.
  \end{enumerate}
  In particular, $X$ is quasi-$F$-split if and only if it is globally $F$-split.
\end{proposition}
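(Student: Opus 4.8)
The plan is to reduce parts (1) and (2) to the structure of the conductor square combined with Reid's classification, and then to deduce (3) and the final assertion formally. I would begin by noting that, for a nonnormal Gorenstein del Pezzo surface $X$, the condition ``$X$ is not tame or \myref{item:double-line-case} holds'' is equivalent to the non-reducedness of the conductor divisor $\BY \subset Y$: non-tameness and \myref{item:double-line-case} both force $\BY$ to be non-reduced by \myref{def:tame}, while if $\BY$ is non-reduced and $X$ is tame, then by \myref{thm:normalization-dP-classification} (using \myref{thm:reducible-dP-classification}~(1) in the reducible case) every $\BY_i$ is a double line, so \myref{item:double-line-case} holds. Hence (1) becomes the assertion: $X$ is seminormal if and only if the conductor ideal is radical in $\sO_Y$.

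For (1), the implication ``seminormal $\Rightarrow$ conductor radical in $\sO_Y$'' is a general fact about the conductor of a reduced excellent scheme~\cite{GrecoTraverso80}. For the converse I would run through Reid's explicit models (\myref{thm:irreducible-dP-classification}, \myref{thm:reducible-dP-classification}, \myref{exa:Reid-irreducible}, \myref{exa:Reid-reducible}): when $\BY$ is reduced, $X$ is the pushout of $C \leftarrow \BY \to Y$ along a finite surjection of reduced schemes with $C$ itself reduced (a line, a smooth conic, or a cycle of lines), and in each case one verifies that $X$ is seminormal --- the projective cones over nodal rational curves and the transverse unions in \myref{exa:Reid-reducible}~(1), (2) are visibly so. (Alternatively, one may invoke the seminormality statement in \cite[Corollary~4.10]{Reid94}.)

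For (2), assume $X$ is seminormal, so by (1) it is tame with $\BY$ reduced. In the reducible case, \myref{rmk:remark-on-reducible} shows that each $\nu\restrict{\BY_i}$ is an isomorphism onto its image in $C$, so the gluing along $C$ is nodal/transverse and $X$ is weakly normal. In the irreducible case, $\nu\restrict{\BY}\colon \BY \to C \isom \PP^1$ is a double cover by \myref{thm:irreducible-dP-classification}~(1), which by \myref{thm:irreducible-dP-classification}~(2) is separable except exactly when $p = 2$, $\BY$ is a smooth conic and $\nu\restrict{\BY}$ is inseparable, i.e.\ in case \myref{item:inseparable-case}. A separable cover makes $X$ weakly normal; in case \myref{item:inseparable-case}, writing $\nu\restrict{\BY}$ locally as $k[[s]] \to k[[t]]$ with $s \mapsto t^2$, a function $b$ on $Y$ restricting to $t$ along $\BY$ satisfies $b^p = b^2 \in \sO_X$ but $b \notin \sO_X$, so $X$ is not weakly normal, while $b^3 \notin \sO_X$ shows that $b$ does not obstruct seminormality. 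This proves (2).

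Part (3) then follows formally: if $X$ is globally $F$-split it is $1$-quasi-$F$-split, hence quasi-$F$-split, hence weakly normal by \myref{lem:qFs-implies-weakly-normal}; conversely, if $X$ is weakly normal it is seminormal, so by (1) it is tame with \myref{item:double-line-case} failing, and by (2) \myref{item:inseparable-case} fails, so $X$ is globally $F$-split by \myref{prop:gFs-of-nonnormal}. Finally, quasi-$F$-split implies weakly normal by \myref{lem:qFs-implies-weakly-normal}, hence globally $F$-split by (3), and the reverse implication is immediate. I expect the main obstacle to be the converse direction of (1) --- verifying uniformly across Reid's list that a reduced conductor forces $C$ to be reduced and the gluing to be of seminormal type --- together with the parallel case-analysis in (2) isolating inseparability of $\nu\restrict{\BY}$ as the only obstruction to weak normality; citing the relevant parts of \cite{Reid94}, \cite{GrecoTraverso80}, and \cite{ReidRobertsSingh96:weaksubintegrality} keeps this manageable.
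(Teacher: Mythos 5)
Your proposal reaches the right conclusions but proves the hard directions by a genuinely different route. The paper's own proof is a pure exclusion argument: it establishes only the easy implications --- (1) in full via Traverso's criterion ($X$ seminormal iff $\BY$ reduced, \cite{GrecoTraverso80}), ``\myref{item:inseparable-case} $\Rightarrow$ not weakly normal'' via weak subintegrality of $\nu$, and ``globally $F$-split $\Rightarrow$ weakly normal'' via \myref{lem:qFs-implies-weakly-normal} --- and then notes that by \myref{prop:gFs-of-nonnormal} the three right-hand conditions cover all nonnormal Gorenstein del Pezzo surfaces, so both sides of (1)--(3) partition the same set and the three containments upgrade to equalities; the paper never verifies weak normality of any surface directly. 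You instead prove the ``only if'' direction of (2) head-on, arguing that a reduced conductor with split or separable gluing $\BY \to C$ forces weak normality. That is a legitimate alternative and arguably more informative, but the two facts it rests on --- ``a separable double cover $\BY \to C$ makes $X$ weakly normal'' and ``transverse gluing makes $X$ weakly normal'' --- are precisely a Traverso-type criterion for weak normality (checking $b^p \in \sO_X \Rightarrow b \in \sO_X$ at the generic points of $C$, using that $X$ is $S_2$ so the condition is codimension~$1$), and in your write-up they are asserted rather than proved or precisely cited; this is the one step you would need to nail down. Your case-by-case verification of the converse in (1) is superfluous, since the Greco--Traverso reference already gives the equivalence as an ``iff''. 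Once (1) and (2) are in place, your deduction of (3) and of the final assertion from \myref{prop:gFs-of-nonnormal} and \myref{lem:qFs-implies-weakly-normal} coincides with the paper's.
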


\begin{proof}
  Let $\nu \colon Y \to X$ be the normalization of $X$ and let $\BY \subset Y$ be the closed subscheme defined by the conductor ideal.
  By~\cite[Corollary 2.7 (i) and (vii)]{GrecoTraverso80}, $X$ is seminormal if and only if $\BY$ is reduced. Thus (1) follows \myref{def:tame} and \myref{thm:normalization-dP-classification}.
  If \myref{item:inseparable-case} holds, then $\nu$ is weakly subintegral and hence $X$ is not weakly normal. This proves the ``if'' part of (2). The ``if'' part of (3) follows from \myref{lem:qFs-implies-weakly-normal}.
  Thus \myref{prop:gFs-of-nonnormal} proves the claim.
\end{proof}

\section{Regular del Pezzo surfaces in mixed characteristic}\label{sec:main}
In the setting of \myref{main:globally-plus-regular}, we present a classification of the closed fiber $M_k$ of $M$, which is inspired by~\cite{Fujita90}.
As a consequence, we establish \myref{main:globally-plus-regular}.

\begin{setting}\label{set:main-theorem}
  Let $(R,\frakm_R,k)$ be a complete DVR of mixed characteristic $(0,p>0)$ with algebraically closed residue field $k$.
  Let $M$ be a regular integral flat projective $R$-scheme with $\Hh^0(M, \sO_M) = R$.
  Suppose that $\dim M = 3$ and $\omega_M^\inv$ is ample.
\end{setting}

\begin{lemma}
  Assume that $M_k$ is reduced.
  Then $M_k$ is a tame Gorenstein del Pezzo surface.
\end{lemma}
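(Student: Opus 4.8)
The plan is to verify that $M_k$ satisfies the paper's definition of a Gorenstein del Pezzo surface, and then to deduce tameness from the cohomological criterion \cite[Corollary~4.10~(1)]{Reid94} recalled above by showing $\Hh^1(M_k, \sO_{M_k}) = 0$.

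First I would check that $M_k$ is a connected reduced projective $k$-scheme of pure dimension $2$ which is Gorenstein, with $\omega_{M_k}^\inv$ ample. Reducedness is the hypothesis and projectivity over $k$ is clear. Since $M$ is $R$-flat, a uniformizer $t$ of $R$ is a nonzerodivisor on $\sO_M$, so $M_k = V(t)$ is an effective Cartier divisor on $M$; as $M$ is regular, hence Gorenstein, its quotient by the nonzerodivisor $t$ is Gorenstein, so $M_k$ is Gorenstein. Connectedness follows from Stein factorization: $\Hh^0(M, \sO_M) = R$ means $f\push \sO_M = \sO_{\Spec R}$, so $f \colon M \to \Spec R$ itself has connected fibers. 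For pure-dimensionality, flatness and finite type imply that $x \mapsto \dim_x M_{f(x)}$ is locally constant, hence constant since $M$ is connected; its value on the generic fiber $M_\eta$ is $2$, because $M_\eta$ is a regular projective scheme over the characteristic-zero fraction field $K$ of $R$ with $\omega_{M_\eta}^\inv$ ample (an ample sheaf is relatively ample and restricts to an ample sheaf on fibers), i.e.\ a smooth del Pezzo surface of dimension $2$. Finally $\sO_M(M_k) \isom \sO_M$ since $M_k$ is a principal divisor, so adjunction gives $\omega_{M_k} \isom \omega_M\restrict{M_k}$, whence $\omega_{M_k}^\inv \isom \omega_M^\inv\restrict{M_k}$ is ample. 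Thus $M_k$ is a Gorenstein del Pezzo surface.

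It remains to show that $\Hh^1(M_k, \sO_{M_k}) = 0$, which I would get from the Euler characteristic. Since $M_k$ is connected and reduced and $k$ is algebraically closed, $\Hh^0(M_k, \sO_{M_k}) = k$. By Serre duality on the Gorenstein, pure-dimensional projective $k$-scheme $M_k$ of dimension $2$, $\Hh^2(M_k, \sO_{M_k})^\vee \isom \Hh^0(M_k, \omega_{M_k})$; and $\Hh^0(M_k, \omega_{M_k}) = 0$ because $\sO_{M_k} \hookrightarrow \bigoplus_Z \sO_Z$ (sum over reduced irreducible components) by reducedness, while on each integral component $Z$, which has dimension $2$, the sheaf $\omega_{M_k}^\inv\restrict{Z}$ is ample and so $\omega_{M_k}\restrict{Z}$ has no nonzero global section (a nonzero one would be nonzero along some curve in $Z$, on which $\omega_{M_k}\restrict{Z}$ has negative degree). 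On the other hand the Euler characteristic is constant in the flat proper family $M \to \Spec R$ over the connected base $\Spec R$ (cf.\ \myref{rmk:multiple-fiber}), so $\chi(\sO_{M_k}) = \chi(\sO_{M_\eta}) = 1$, the last equality because $M_\eta$ is a smooth del Pezzo surface over the characteristic-zero field $K$ with $\Hh^0(M_\eta, \sO_{M_\eta}) = K$ and $\Hh^1(M_\eta, \sO_{M_\eta}) = \Hh^2(M_\eta, \sO_{M_\eta}) = 0$ by Kodaira--Akizuki--Nakano vanishing. Combining, $h^1(\sO_{M_k}) = h^0(\sO_{M_k}) + h^2(\sO_{M_k}) - \chi(\sO_{M_k}) = 1 + 0 - 1 = 0$. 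Hence $M_k$ is tame by \cite[Corollary~4.10~(1)]{Reid94}.

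I do not expect a serious obstacle; the only points that need care are making sure the hypotheses for Serre duality are in place — that $M_k$ is Gorenstein and pure of dimension $2$, handled in the first step — and the input $\chi(\sO_{M_\eta}) = 1$, which comes from the characteristic-zero vanishing on the regular generic fiber. Everything else is routine.
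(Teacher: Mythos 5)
Your proof is correct and follows essentially the same route as the paper: flatness gives $\chi(\sO_{M_k}) = \chi(\sO_{M_\eta}) = 1$ with the generic fiber a characteristic-zero del Pezzo surface, Serre duality and the ampleness of $\omega_{M_k}^\inv$ give $\Hh^2(M_k,\sO_{M_k}) = 0$, reducedness gives $\Hh^0(M_k,\sO_{M_k}) = k$, and hence $\Hh^1(M_k,\sO_{M_k}) = 0$, which is Reid's criterion for tameness. You simply spell out in more detail the (implicitly assumed) verification that $M_k$ is a Gorenstein del Pezzo surface.
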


\begin{proof}
  Let $\eta$ be the generic point of $\Spec R$.
  We have $\chi(\sO_{M_k}) = \chi(\sO_{M_\eta}) = 1$ since $M_\eta$ is a regular del Pezzo surface over a field $\residuefield{\eta}$ of characteristic zero.
  Since $M_k$ is a Gorenstein del Pezzo surface, we obtain $\Hh^2(M_k,\sO_{M_k}) = 0$ by Serre duality.
  We have $\Hh^0(M_k,\sO_{M_k}) = k$ since $M_k$ is reduced.
  Therefore, we see $\Hh^1(M_k,\sO_{M_k}) = 0$, as desired.
\end{proof}

\subsection{Nonnormal irreducible case}
We treat the case where the closed fiber is nonnormal and irreducible.

\begin{lemma}\label{lem:factor-normalization}
  Let $X$ be a reduced projective $k$-scheme, and let $C \subset X$ be the closed subscheme defined by the conductor ideal.
  Write $\nu \colon Y \to X$ for the normalization of $X$.
  Then the blowup $\mu \colon \widetilde{X} \to X$ along $C$ factors through the normalization:
  \begin{equation*}
    \begin{tikzcd}
      \widetilde{X} \ar[r, dashed, "\exists"'] \ar[rr, bend left, "\mu" near start] & Y \ar[r, "\nu"'] & X
    \end{tikzcd}\text{.}
  \end{equation*}
\end{lemma}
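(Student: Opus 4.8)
The plan is to identify $\widetilde{X}$, the blowup of $X$ along $C$, with the blowup $\Bl_{\BY}Y$ of $Y$ along the closed subscheme $\BY \subset Y$ defined by $\conductorideal\sO_Y$; the factorization asked for is then obtained as $\widetilde{X} \isom \Bl_{\BY}Y \to Y \xrightarrow{\nu} X$, and the composite $\widetilde{X} \to Y \to X$ is, under this identification, the structure morphism $\mu$.

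First I would record how the conductor interacts with the Rees construction. Since $X$ is excellent, $\nu$ is finite, in particular affine. By its very definition $\conductorideal$ is an ideal sheaf not only of $\sO_X$ but of $\nu\push\sO_Y$, so $\conductorideal\cdot\nu\push\sO_Y = \conductorideal$; by induction each power $\conductorideal^n$ is again an ideal of $\nu\push\sO_Y$, whence $\conductorideal^n\cdot\nu\push\sO_Y = \conductorideal^n$ and $\nu\push\bigl((\conductorideal\sO_Y)^n\bigr) = \conductorideal^n$ as $\sO_X$-modules for every $n \ge 1$ (computing locally, $\conductorideal^n\sO_Y$ corresponds to $I^nB$ with $I = \conductorideal$, $B = \nu\push\sO_Y$, and $I^nB = I^n$). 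Writing $\mathcal{R} \coloneqq \bigoplus_{n \ge 0}\conductorideal^n$ for the Rees algebra on $X$, so that $\widetilde{X}$ is its relative $\Proj$ over $X$, and $\mathcal{R}_Y \coloneqq \bigoplus_{n\ge0}(\conductorideal\sO_Y)^n$ for the Rees algebra of $\BY$ on $Y$, it follows that $\nu\push\mathcal{R}_Y = \nu\push\sO_Y \oplus \bigoplus_{n\ge1}\conductorideal^n$, which agrees with $\mathcal{R}$ in every positive degree and differs from it only in degree $0$, where $\sO_X \subseteq \nu\push\sO_Y$.

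Now I would compare the two relative $\Proj$'s. Because $\nu$ is affine, $\underline{\Proj}_X(\nu\push\mathcal{R}_Y) \isom \underline{\Proj}_Y(\mathcal{R}_Y) = \Bl_{\BY}Y$ as $X$-schemes, with structure morphism to $X$ factoring through $Y = \underline{\Spec}_X(\nu\push\sO_Y)$. It remains to see that the inclusion $\mathcal{R}\hookrightarrow\nu\push\mathcal{R}_Y$, an isomorphism in all positive degrees, induces an isomorphism on relative $\Proj$ over $X$. This is local, so take $X = \Spec A$, $\conductorideal = I$, $\nu\push\sO_Y = B$; one has to check that on each chart $D_+(f)$ with $f$ homogeneous of degree $d \ge 1$ the degree-zero parts of the two localizations at $f$ coincide. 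They do: for $b \in B$ one has $bf \in I^d$ because $I^d$ is an ideal of $B$, hence $b = bf/f$ already lies in $(A[It]_f)_0$, so enlarging the degree-$0$ piece from $A$ to $B$ changes nothing. Gluing these identifications gives $\widetilde{X}\isom\Bl_{\BY}Y$ over $X$, and composing with $\Bl_{\BY}Y\to Y$ yields the factorization of $\mu$ through $\nu$.

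The one genuinely delicate point is this last step: one must argue that passing to relative $\Proj$ kills the discrepancy between $\sO_X$ and the larger degree-$0$ ring $\nu\push\sO_Y$, which is exactly the local computation above and hinges crucially on every power of the conductor being an ideal of $\nu\push\sO_Y$, not merely of $\sO_X$. If one prefers to avoid relative $\Proj$ entirely, one can build the factoring morphism by hand: $\mu^{-1}\conductorideal\cdot\sO_{\widetilde{X}}$ is an invertible ideal sheaf, and sending $b \in \nu\push\sO_Y$ to $bf/f$, for $f$ a local generator of $\mu^{-1}\conductorideal\cdot\sO_{\widetilde{X}}$ (note $bf \in \conductorideal$), defines an $\sO_X$-algebra homomorphism $\nu\push\sO_Y \to \mu\push\sO_{\widetilde{X}}$; since $\nu$ is affine this is the same datum as a morphism $\widetilde{X} \to Y$ over $X$, and the only thing left to verify is that $bf/f$ is independent of the chosen local generator $f$.
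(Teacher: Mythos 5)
Your argument is correct and is essentially the paper's proof: both identify $\widetilde{X} = \sProj_X\bigoplus_{i\ge 0}\sI^i$ with $\sProj_X(\nu\push\sO_Y \oplus \sI \oplus \sI^2 \oplus \cdots)$, using that each power of the conductor is an ideal of $\nu\push\sO_Y$, so that the relative $\Proj$ is naturally a $Y$-scheme. You additionally spell out the local verification that enlarging the degree-zero piece does not change the $\Proj$, a step the paper leaves implicit.
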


\begin{proof}
  Write $\sI$ for the conductor ideal of $X$.
  We have
  \begin{equation*}
    \widetilde{X} \isom \sProj_X \bigoplus_{i \ge 0} \sI^i
    \isom \sProj_X \left(\nu\push \sO_Y \oplus \sI \oplus \sI^2 \oplus \cdots\right)\text{.}
  \end{equation*}
  Then the right-hand side is naturally a $Y$-scheme since $\nu$ is an affine morphism and $\sI^i$ is a $\nu\push \sO_Y$-module.
  This gives the assertion.
\end{proof}

\begin{lemma}\label{lem:intersection-number-by-classification}
  Let $X$ be a tame nonnormal irreducible Gorenstein del Pezzo surface.
  We use the notation of \myref{nota:nonnormal-dP}: write $d \coloneqq (-\Kk_X)\powndot{2}$ and $Y \coloneqq X\normaln$, and let $C \subset X$ be the subscheme defined by the conductor ideal.
  Let $\widetilde{X}$ be the blowup of $X$ along $C$, and let $\mu^\inv C \subset \widetilde{X}$ be the pullback of $C$ as a closed subscheme.
  Then
  \begin{equation*}
    (\mu^\inv C)_{\widetilde{X}}\powndot{2} = \begin{cases}
      4 - d & (\text{if $Y \not\isom \PP^2$}) \\
      5 - d & (\text{if $Y \isom \PP^2$})\text{.}
    \end{cases}
  \end{equation*}
  Note that $Y \isom \PP^2$ holds only when $d = 1$ or $4$ by \myref{thm:normalization-dP-classification}.
\end{lemma}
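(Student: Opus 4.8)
The plan is to reduce to the normalization by way of \myref{lem:factor-normalization} and then run through the rows of Reid's table in \myref{thm:normalization-dP-classification}. By \myref{lem:factor-normalization} the morphism $\mu$ factors through $\nu \colon Y \to X$, and since $\conductorideal\sO_Y$ is precisely the ideal sheaf defining $\BY$, the scheme $\widetilde{X}$ is the blowup of $Y$ along $\conductorideal\sO_Y$. Consequently, whenever this ideal is invertible --- in particular whenever $Y$ is smooth, so that $\BY$ is an effective Cartier divisor --- we have $\widetilde{X} \isom Y$, the map $\mu$ is $\nu$, and $\mu^\inv C = \BY$ as a Cartier divisor on $Y$; hence $(\mu^\inv C)_{\widetilde{X}}\powndot{2} = (\BY)_Y\powndot{2}$. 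Inspecting \myref{thm:normalization-dP-classification} (and using $Y = Y_1$, $d = d_1$ since $X$ is irreducible), the surface $Y$ is smooth exactly in cases (a), (b), (d), (e), while in case (c) one has $Y \isom \PP(1,1,d)$ with $d \ge 2$.

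For the smooth cases I would simply read $(\BY)_Y\powndot{2}$ off the column $\sO_{Y_i}(\BY_i)$ of \myref{tab:normalization}. In cases (a) and (b), $Y \isom \PP^2$ and $\BY \in \abs{\sO_{\PP^2}(2)}$ with $d = 1$, respectively $\BY \in \abs{\sO_{\PP^2}(1)}$ with $d = 4$; in either case $(\BY)_Y\powndot{2} = 5 - d$. In cases (d) and (e), $Y$ is the Hirzebruch surface $\rationalruled{d-2}$, respectively $\rationalruled{d-4}$, and $\BY \in \abs{F + E}$, respectively $\abs{E}$; the elementary computations $(F+E)\powndot{2} = 4 - d$ on $\rationalruled{d-2}$ and $E\powndot{2} = 4 - d$ on $\rationalruled{d-4}$ give the claim, matching the observation in the statement that $Y \isom \PP^2$ occurs only for $d \in \{1,4\}$. (Alternatively, one can use the conductor adjunction $\Kk_Y + \BY = \nu\pull\Kk_X$ together with $(\nu\pull\Kk_X)\powndot{2} = d$, $\sO_Y(1)\ndot\BY = 2$ from \myref{rmk:Reid-table}, and $\Kk_{\PP^2}\powndot{2} = 9$, $\Kk_{\rationalruled{a}}\powndot{2} = 8$, to get $(\BY)_Y\powndot{2} = \Kk_Y\powndot{2} - d - 4$.)

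It remains to handle case (c), where $Y \isom \PP(1,1,d)$ and $\conductorideal\sO_Y$ may fail to be invertible at the vertex $v$. If $d = 2$, then $\sO_{\PP(1,1,2)}(2)$ is a line bundle, so $\BY \in \abs{\sO_Y(2)}$ is an effective Cartier divisor regardless of its type, $\widetilde{X} \isom Y$, and $(\mu^\inv C)_{\widetilde{X}}\powndot{2} = (\sO_Y(2))\powndot{2} = 2 = 4 - d$. If $d \ge 3$ --- where \myref{tab:normalization} forces $\BY$ to be a line pair or a double line --- then $\conductorideal\sO_Y$ is non-invertible only at $v$, and the plan is to compute locally there: $Y$ is a toric surface whose only singularity, at the torus-fixed point $v$, has minimal resolution introducing a single $(-d)$-curve, and $\BY$ is the union of two torus-invariant lines through $v$ (or one such line doubled), so $\conductorideal\sO_Y$ is a monomial ideal near $v$. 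Computing its blowup combinatorially, one strips off a principal factor and is left with the blowup of the torus-fixed point, i.e. the minimal resolution $\rationalruled{d} \to \PP(1,1,d)$ (with negative section $E$, $E\powndot{2} = -d$); thus $\widetilde{X} \isom \rationalruled{d}$, and tracking $\conductorideal\sO_Y$ through this blowup identifies $\mu^\inv C$ with the sum of $E$ and the strict transform of $\BY$, the latter being linearly equivalent to $2F$. Hence $(\mu^\inv C)_{\widetilde{X}}\powndot{2} = (2F + E)\powndot{2} = 4 - d$.

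The routine parts are the intersection computations on $\PP^2$ and on the Hirzebruch surfaces. I expect the main obstacle to be case (c) with $d \ge 3$: identifying $\widetilde{X}$ with the minimal resolution of $\PP(1,1,d)$ and pinning down the class of $\mu^\inv C$ on it. This is exactly where one needs the local analysis at the vertex, and where the fact that $\BY$ is isomorphic to a plane conic --- so that near $v$ it is at worst a node, or a cusp, of two ruling lines rather than anything more degenerate --- makes the conductor ideal have the expected monomial form and the computation close up.
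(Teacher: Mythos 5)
Your proposal is correct and follows essentially the same route as the paper: factor the blowup through the normalization via \myref{lem:factor-normalization}, read off $(\BY)_Y\powndot{2}$ from Reid's table whenever $\BY$ is Cartier, and for $Y \isom \PP(1,1,d)$ with $d \ge 3$ identify $\widetilde{X}$ with $\rationalruled{d}$ and $\mu^\inv C$ with $2F + E$. The only difference is that you supply the local toric verification at the vertex that the paper leaves implicit, and your computations there (and in the smooth cases) check out.
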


\begin{proof}
  By \myref{lem:factor-normalization}, $\widetilde{X}$ is also the blowup of $Y$ along $\BY$.
  We show the assertion by a direct computation based on \myref{thm:normalization-dP-classification}.
  Assume that $\BY$ is Cartier. This holds precisely when $Y$ is of case (c) with $d = 2$, or of case (a), (b), (d) or (e) from \myref{tab:normalization}. Then we have $\widetilde{X} = Y$, and $(\mu^\inv C)_{\widetilde{X}}\powndot{2} = (\BY)_Y\powndot{2}$ can be computed case by case.
  We turn to the other case: assume that $Y = \PP(1,1,d)$ and $d \ge 3$. Then $\widetilde{X} \isom \rationalruled{d}$ and $\mu^\inv C \equivlinearly 2F + E$. Hence, $(\mu^\inv C)_{\widetilde{X}}\powndot{2} = 4 - d$ in this case.
\end{proof}

\begin{lemma}[{cf.~proof of \cite[Theorem 3.1]{Fujita90}}]\label{lem:intersection-number-by-blowup-M}
  We work under \myref{set:main-theorem}.
  Let $\mu_M \colon \widetilde{M} \to M$ be the blowup of $M$ along $C \subset X \subset M$ and let $\mu_X \colon \widetilde{X} \to X$ be its restriction to the strict transform $\widetilde{X}$ of $X$.
  Write $\mu_X^\inv C \subset \widetilde{X}$ for the pullback of $C$ as a closed subscheme of $X$.
  Then $(\mu_X^\inv C)_{\widetilde{X}}\powndot{2} = -2$.
\end{lemma}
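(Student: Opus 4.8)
The plan is to realize $(\mu_X^\inv C)_{\widetilde X}\powndot{2}$ as an intersection number on the regular threefold $\widetilde M$ and to compute it using that the closed fiber $X \coloneqq M_k$ is numerically trivial as a Cartier divisor on $M$. In the case at hand $X$ is a tame nonnormal irreducible Gorenstein del Pezzo surface, so by \myref{thm:irreducible-dP-classification} we have $C \isom \PP^1$ and $\nu\restrict{\BY}\colon \BY \to C$ is a double cover. Since $M$ is regular and $C$ is regular, $C \into M$ is a regular immersion of codimension $2$; hence $\widetilde M$ is regular, the exceptional divisor $E \subset \widetilde M$ of $\mu_M$ is a $\PP^1$-bundle over $C$, and $\mathcal{I}_C\sO_{\widetilde M} = \sO_{\widetilde M}(-E)$. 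Restricting the last equality to $\widetilde X$ gives $\mathcal{I}_C\sO_{\widetilde X} = \sO_{\widetilde X}(-E\restrict{\widetilde X})$; since $\mu_X^\inv C$ is by definition the Cartier divisor cut out by $\mathcal{I}_C\sO_{\widetilde X}$, this shows $\mu_X^\inv C = E\restrict{\widetilde X}$, and hence
\[
  (\mu_X^\inv C)_{\widetilde X}\powndot{2} = (E\restrict{\widetilde X})_{\widetilde X}\powndot{2} = (E\powndot{2}\ndot\widetilde X)_{\widetilde M}.
\]

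Now write the total transform as $\mu_M\pull X = \widetilde X + mE$, where $m \coloneqq \mathrm{ord}_E(\mu_M\pull X)$ and the coefficient of the strict transform $\widetilde X$ is $1$ because $X = \modp{M}$ is reduced and irreducible. As $X = \modp{M}$ is the divisor of $p$, we have $\sO_M(X) \isom \sO_M$, so $\mu_M\pull X$ is a principal divisor and hence numerically trivial; thus $E\powndot{2}\ndot\mu_M\pull X = 0$, giving
\[
  (E\powndot{2}\ndot\widetilde X)_{\widetilde M} = -m\,(E\powndot{3})_{\widetilde M}.
\]
The standard blowup formula for a curve in a regular threefold yields $E\powndot{3} = -\deg\mathcal{N}_{C/M}$ (a routine Chern-class computation on the $\PP^1$-bundle $E \to C$). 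By adjunction along $X \subset M$, whose normal bundle in $M$ is trivial, $\omega_X^\inv \isom \omega_M^\inv\restrict{X}$, so $-\Kk_M\ndot C = -\Kk_X\ndot C$; and since $\sO_Y(1) = \nu\pull\omega_X^\inv$ restricts on $\BY$ to $(\nu\restrict{\BY})\pull(\omega_X^\inv\restrict{C})$, while $\nu\restrict{\BY}$ has degree $2$ and $\sO_Y(1)\ndot\BY = 2$ (\myref{rmk:Reid-table}), we obtain $2 = 2(-\Kk_X\ndot C)$, i.e.\ $-\Kk_M\ndot C = 1$. Since $C \isom \PP^1$, it follows that $\deg\mathcal{N}_{C/M} = (-\Kk_M\ndot C) + \deg\omega_C = 1 - 2 = -1$, so $E\powndot{3} = 1$.

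It remains to show $m = 2$. The quantity $m = \mathrm{ord}_E(\mu_M\pull X)$ is the order of vanishing of $p$ along $C$, which — as $X = \modp{M}$ is a hypersurface in the regular scheme $M$ — equals the Hilbert--Samuel multiplicity $e(\sO_{X,\zeta})$ of $X$ at the generic point $\zeta$ of $C$. Since $C$ is reduced, $\sO_{C,\zeta} = \sO_{X,\zeta}/\mathfrak{c}_\zeta = \residuefield{\zeta}$ is a field, so the conductor ideal $\mathfrak{c}_\zeta$ of $A \coloneqq \sO_{X,\zeta}$ coincides with its maximal ideal $\mathfrak{m}_\zeta$. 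Writing $\overline{A}$ for the normalization of $A$ — a semilocal product of discrete valuation rings, finite and birational over $A$ — we thus have $\overline{A}/\mathfrak{m}_\zeta\overline{A} = \overline{A}/\mathfrak{c}_\zeta\overline{A}$; the $\residuefield{\zeta}$-dimension of the right-hand side is the degree of $\nu\restrict{\BY}\colon\BY\to C$, which is $2$, while the $\residuefield{\zeta}$-dimension of the left-hand side equals $e(A)$ (multiplicity is additive under the finite birational extension $A \subset \overline{A}$, and for a product of discrete valuation rings it is computed by $\dim_{\residuefield{\zeta}}(\overline{A}/\mathfrak{m}_\zeta\overline{A})$). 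Hence $m = e(A) = 2$, and
\[
  (\mu_X^\inv C)_{\widetilde X}\powndot{2} = -m\,E\powndot{3} = -2.
\]

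I expect the evaluation $m = 2$ to be the step requiring the most care. An alternative route to it avoids the local multiplicity computation: restrict the identity $\Kk_{\widetilde M} + \widetilde X \equivlinearly \mu_M\pull(\Kk_M + X) + (1 - m)E$ to $\widetilde X$ to obtain $\Kk_{\widetilde X} \equivlinearly \mu_X\pull\Kk_X + (1 - m)(\mu_X^\inv C)$, factor $\mu_X$ through the normalization $Y$ via \myref{lem:factor-normalization}, and compare coefficients along the exceptional locus using the conductor formula $\nu\pull\Kk_X \equivlinearly \Kk_Y + \BY$ and the finitely many cases of \myref{thm:normalization-dP-classification}.
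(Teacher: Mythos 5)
Your proposal is correct, and it shares the paper's global skeleton: both arguments reduce the claim to the identity $(\mu_X^\inv C)_{\widetilde{X}}\powndot{2} = E \ndot E \ndot \widetilde{X} = -a\,E\powndot{3}$ on $\widetilde{M}$, where $\mu_M\pull X = \widetilde{X} + aE$ is numerically trivial, and both evaluate $E\powndot{3} = \deg\conormal{C}{M} = 1$ from $-\Kk_M \ndot C = 1$, which is traced back to $\sO_Y(1)\ndot\BY = 2$ and the fact that $\nu\restrict{\BY}$ is a double cover. The genuinely different step is the determination of $a = 2$. The paper stays within intersection theory: it pairs the numerically trivial class $\mu_M\pull X$ with $\mu_M\pull(-\Kk_M)\ndot E$, computing $\mu_M\pull(-\Kk_M)\ndot E\powndot{2} = -1$ and $\mu_M\pull(-\Kk_M)\ndot E\ndot\widetilde{X} = \nu\pull(-\Kk_X)\ndot\BY = 2$, whence $a = 2$. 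You instead identify $a$ with the Hilbert--Samuel multiplicity of the hypersurface $X$ at the generic point $\zeta$ of $C$ and compute it locally, using that the conductor of $A = \sO_{X,\zeta}$ equals its maximal ideal (because $C$ is reduced), so that $e(A) = \dim_{\kappa(\zeta)}\bigl(\overline{A}/\mathfrak{c}\,\overline{A}\bigr) = \deg(\nu\restrict{\BY}) = 2$. Both routes ultimately derive the ``$2$'' from the double-cover structure of $\BY \to C$; yours makes the geometric meaning of the coefficient explicit (the multiplicity of $X$ along its non-normal locus) at the cost of some commutative algebra, while the paper's version needs only one more intersection number. The local facts you invoke (order of vanishing equals multiplicity for a hypersurface in a regular local ring; $e(A) = \dim_{\kappa}(\overline{A}/\frakm\overline{A})$ for a reduced one-dimensional local ring with finite birational normalization) are standard and correctly applied, so I see no gap.
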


\begin{proof}
  Let $E \subset \widetilde{M}$ be the exceptional divisor of the blowup $\mu_M$ and write $\mu_E \colon E \to C$ for the restriction of $\mu_M$ to $E$.
  We have $C \isom \PP^1_k$ by \myref{thm:irreducible-dP-classification}.
  Since $C$ and $M$ are regular, we see $\mu_E \colon E \isom \PP_{C}(\conormal{C}{M}) \to C$ is a projective bundle.
  Note that $-\Kk_M\restrict{X} \equivlinearly -\Kk_X$ by the adjunction formula and $-\Kk_M \ndot C = -\Kk_X \ndot C = 1$ by \myref{thm:irreducible-dP-classification}.
  Since $X = \divisor_X p \equivlinearly 0$, we have
  \begin{equation*}
    \mu_M\pull X \equivlinearly \widetilde{X} + a E \equivlinearly 0
  \end{equation*}
  for some $a \in \ZZ$.
  We see
  \begin{equation*}
    \mu_M\pull(-\Kk_M) \ndot E \ndot E
    = \mu_E\pull(-\Kk_M\restrict{C}) \ndot E\restrict{E}
    = -\deg(-\Kk_M\restrict{C}) = -1
  \end{equation*}
  and
  \begin{align*}
    \mu_M\pull(-\Kk_M) \ndot E \ndot \widetilde{X}
    &= \mu_X\pull(-\Kk_M\restrict{X}) \ndot E\restrict{\widetilde{X}}
    = \mu_X\pull(-\Kk_X) \ndot \mu_X^\inv C \\
    &= \nu\pull(-\Kk_X) \ndot \BY = 2
  \end{align*}
  by noting \myref{rmk:Reid-table}.
  Therefore, we obtain
  \[
    2 = \mu_M\pull(-\Kk_M) \ndot \widetilde{X} \ndot E = -a \, \mu_M\pull(-\Kk_M) \ndot E \ndot E = a\text{,}
  \]
  that is, $a = 2$.
  Then we deduce
  \begin{equation*}
    (\mu_X^\inv C)_{\widetilde{X}}\powndot{2}
    = E \ndot E \ndot \widetilde{X}
    = -2 \, E\powndot{3} = -2\deg \conormal{C}{M} = -2\text{,}
  \end{equation*}
  which is our claim.
\end{proof}

\begin{theorem}[{cf.~\cite[Subsection 6.1]{Fukuoka20:dP6}}]\label{thm:irreducible-main}
  Suppose that $k$ and $M$ are as in \myref{set:main-theorem}.
  Assume $X \coloneqq M_k$ is nonnormal and irreducible.
  Then we have $d = 6$. More precisely,
  \begin{equation*}
    (Y, \BY) \isom (\rationalruled{2}, E) \text{ or } (\rationalruled{4}, F + E)\text{.}
  \end{equation*}
\end{theorem}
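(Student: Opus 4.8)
The plan is to determine the degree $d = (-\Kk_X)^2$ by equating the two computations of the self-intersection $(\mu_X^\inv C)^2_{\widetilde X}$ already in hand, and then to isolate the surviving rows of Reid's table. First I would note that $X = M_k$ is reduced by \myref{rmk:multiple-fiber} (it is irreducible and $\omega_M^\inv$ is ample), so $X$ is an irreducible tame nonnormal Gorenstein del Pezzo surface, and \myref{thm:normalization-dP-classification} and \myref{thm:irreducible-dP-classification} apply. The surface $\widetilde X$ of \myref{lem:intersection-number-by-classification} agrees with that of \myref{lem:intersection-number-by-blowup-M}: the strict transform of $X$ in $\Bl_C M$ is canonically $\Bl_C X$ (pulling $\mathcal{I}_C \subset \sO_M$ back to $\sO_X$ recovers $\mathcal{I}_C \subset \sO_X$), and under this identification the divisors written $\mu^\inv C$ and $\mu_X^\inv C$ coincide. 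Hence \myref{lem:intersection-number-by-classification} and \myref{lem:intersection-number-by-blowup-M} give $-2 = 4 - d$ if $Y \not\isom \PP^2$ and $-2 = 5 - d$ if $Y \isom \PP^2$. In the second case $d = 7$, which is impossible since $Y \isom \PP^2$ forces $d \in \{1, 4\}$ by \myref{thm:normalization-dP-classification}; so $Y \not\isom \PP^2$ and $d = 6$.

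It then remains to run through \myref{tab:normalization} with $d = d_1 = 6$ (here $Y = Y_1$ is connected, as $X$ is irreducible) and $Y \neq \PP^2$. Cases (a) and (b) are excluded by their degrees, leaving case (c) with $Y = \PP(1,1,6)$ (and $\BY$ a line pair or a double line), case (d) with $Y = \rationalruled{4}$ (and $\BY$ the line pair $F + E$), and case (e) with $Y = \rationalruled{2}$ (and $\BY$ the negative section $E$). To eliminate case (c), I would invoke \myref{exa:Reid-irreducible}: an irreducible case-(c) surface of degree $d = 6$ is a projective cone whose embedding dimension at the vertex equals $d = 6$. But $X = M_k$ is a Cartier divisor on the regular three-fold $M$, so every point of $X$ has embedding dimension at most $3$; contradiction. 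Therefore $d = 6$ and $(Y, \BY) \isom (\rationalruled{2}, E)$ or $(\rationalruled{4}, F + E)$, the conic of the prescribed type being unique up to isomorphism by \myref{rmk:Reid-table}.

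The one step that is not pure bookkeeping is the exclusion of case (c): the intersection-number identity is perfectly consistent with $Y = \PP(1,1,6)$ (that case also lies in the $Y \not\isom \PP^2$ branch of \myref{lem:intersection-number-by-classification}, again giving $d = 6$), so one must genuinely use the regularity of the ambient three-fold $M$ — via the bound on embedding dimensions — to rule it out. I expect this to be the crux; everything else amounts to combining the two lemmas and inspecting \myref{tab:normalization}, plus checking that when $d = 6$ the divisor $\BY$ is forced to be a line pair in case (d) and the rigid negative section in case (e), so that both pairs are pinned down up to isomorphism.
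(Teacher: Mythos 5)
Your proposal is correct and follows essentially the same route as the paper: combine \myref{lem:intersection-number-by-classification} and \myref{lem:intersection-number-by-blowup-M} to force $d=6$, then use the bound of $3$ on embedding dimensions coming from the regularity of $M$ together with \myref{exa:Reid-irreducible} to exclude the cone case (c). You have merely made explicit two steps the paper leaves implicit (the identification of the strict transform of $X$ in $\Bl_C M$ with $\Bl_C X$, and the elimination of the $Y\isom\PP^2$ branch), and your identification of the case-(c) exclusion as the one non-bookkeeping step matches the paper's logic exactly.
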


\begin{proof}
  By \myref{lem:intersection-number-by-classification} and \myref{lem:intersection-number-by-blowup-M}, we obtain $d = 6$.
  We note that the embedding dimension of $X$ at any point is at most~$3$ since $M$ is regular of dimension~$3$.
  Thus, we get the assertion from \myref{exa:Reid-irreducible}.
\end{proof}

\begin{remark}\label{rmk:dP6-exist}
  We expect that there are regular del Pezzo surfaces $M$ in mixed characteristic with these nonnormal irreducible closed fibers.
  Note that analogous examples in characteristic zero are constructed~\cite{Fukuoka20:dP6}.
\end{remark}

\begin{corollary}\label{cor:gFs-irreducible}
  In the setting of \myref{thm:irreducible-main}, if moreover $p > 2$, then $X$ is globally $F$-split.
\end{corollary}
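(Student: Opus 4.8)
The plan is to deduce the corollary from \myref{prop:gFs-of-nonnormal}, which asserts that a tame nonnormal Gorenstein del Pezzo surface is globally $F$-split unless either all connected components of its conductor divisor on the normalization are double lines (\myref{item:double-line-case}), or $p = 2$ and the normalization restricts over the conductor to an inseparable double cover of a smooth conic (\myref{item:inseparable-case}). Thus the entire argument amounts to verifying that $X$ is tame and that neither exceptional case occurs.

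First I would check that $X = M_k$ is a reduced Gorenstein del Pezzo surface. Reducedness follows from irreducibility together with the absence of multiple fibers (\myref{rmk:multiple-fiber}): an irreducible closed fiber that is not a multiple fiber is reduced. The Gorenstein property holds because $M_k$ is an effective Cartier divisor on the regular scheme $M$, and $\omega_{M_k}^\inv \isom \omega_M^\inv\restrict{M_k}$ is ample since $\omega_M^\inv$ is. Being a reduced Gorenstein del Pezzo surface, $X$ is tame by the first lemma of \myref{sec:main} (equivalently, $\Hh^1(X, \sO_X) = 0$, so \cite[Corollary 4.10 (1)]{Reid94} applies); since $X$ is nonnormal by hypothesis, \myref{prop:gFs-of-nonnormal} is applicable.

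Next I would invoke \myref{thm:irreducible-main}: we have $(Y, \BY) \isom (\rationalruled{2}, E)$ or $(\rationalruled{4}, F + E)$. Matching these against Reid's \myref{tab:normalization} with $r = 1$ and $d_1 = d = 6$, the first is case (e) with $\BY$ a smooth conic, and the second is case (d) with $d_1 = 6 \geq 4$, so $\BY$ is a line pair. In either case $\BY$ is reduced and, in particular, has no double-line component, so \myref{item:double-line-case} does not arise. \myref{item:inseparable-case} requires $p = 2$, which is excluded by hypothesis. Therefore \myref{prop:gFs-of-nonnormal} shows that $X$ is globally $F$-split.

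I do not anticipate any serious obstacle: the argument is essentially bookkeeping, and the only points requiring care are confirming that reducedness (hence tameness) of $X$ is in force and reading off the correct rows of \myref{tab:normalization} for the two pairs produced by \myref{thm:irreducible-main}. One also sees from this that the hypothesis $p > 2$ is used solely to exclude \myref{item:inseparable-case}, which can occur only in the $(\rationalruled{2}, E)$ case where $\BY$ is a smooth conic.
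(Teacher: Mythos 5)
Your proposal is correct and follows exactly the paper's own (one-line) proof: combine \myref{thm:irreducible-main} with \myref{prop:gFs-of-nonnormal}, checking that the two possible pairs $(\rationalruled{2}, E)$ and $(\rationalruled{4}, F+E)$ have reduced $\BY$ (so \myref{item:double-line-case} fails) and that $p>2$ rules out \myref{item:inseparable-case}. The extra bookkeeping you supply (reducedness via \myref{rmk:multiple-fiber}, tameness, and matching against \myref{tab:normalization}) is accurate and consistent with what the paper leaves implicit.
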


\begin{proof}
  This is a consequence of \myref{thm:irreducible-main} and \myref{prop:gFs-of-nonnormal}.
\end{proof}

\subsection{Reducible case}
We now turn to the case of reducible closed fibers.
Such fibers are classified (assuming reducedness), and the result of the classification is analogous to Fujita's classification of reducible fibers of del Pezzo fibrations in characteristic zero~\cite{Fujita90}.

Our proof is inspired by that of~\cite[Theorem 5.2]{Loginov22}, and is based on the following simple observation.

\begin{remark}[{cf.~\cite[Corollary 3.8 Triple point formula]{Loginov22} and~\cite[Lemma 2.1]{Kulikov77}}]\label{rmk:triple-points-formula}
  We work under \myref{set:main-theorem}.
  Assume $X$ is reduced.
  Let $X = \sum_i X_i$ be the decomposition into irreducible components.
  Let $\Gamma \subset X$ be a curve. Then, since $X \equivlinearly 0$, we have
  \begin{equation*}
    \sum_i X_i \ndot \Gamma = 0\text{.}
  \end{equation*}
  If $\Gamma \subset X_l$ and $i \neq l$, then we have
  \begin{equation*}
    X_i \ndot \Gamma = (X_i \cap X_l \ndot \Gamma)_{X_l}\text{.}
  \end{equation*}
\end{remark}

\begin{lemma}\label{lem:intersection-description}
  Let $X$ be a reducible tame Gorenstein del Pezzo surface and let $X = \bigcup_{i=1}^r X_i$ be the irreducible decomposition.
  We use the notation of \myref{nota:nonnormal-dP}.
  Then $X_i \cap X_l$ for $i \neq l$ is as follows:
  \begin{itemize}
    \item If all $\BY_i$ are smooth conics, then $X_i \cap X_l = C$. Recall that $r = 2$ in this case.
    \item If all $\BY_i$ are line pairs, then $X_i \cap X_{i+1}$ is a line pair if $r = 2$ and a line if $r \ge 3$. $X_i \cap X_l$ is a point otherwise. Here, we use the notation of \myref{exa:Reid-reducible}.
    \item If all $\BY_i$ are double lines, then $X_i \cap X_l$ is a double line if $r = 2$ or a line if $r \ge 3$.
  \end{itemize}
\end{lemma}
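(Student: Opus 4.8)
The plan is to deduce everything from Reid's structure theorem \myref{thm:reducible-dP-classification}: since $X$ is reducible and tame, all the $\BY_i$ are mutually isomorphic and $X$ is, up to isomorphism, one of the explicitly embedded surfaces of \myref{exa:Reid-reducible}. As each $\BY_i$ is a plane conic, the three bullets of the lemma correspond precisely to the three possibilities for this common conic: all smooth conics (models \myref{exa:Reid-reducible}~(1) and (4)), all line pairs (model \myref{exa:Reid-reducible}~(2)), or all double lines (model \myref{exa:Reid-reducible}~(3)). In particular the claim that $r=2$ in the smooth conic case is immediate, since \myref{exa:Reid-reducible}~(1) and (4) are the only models whose $\BY_i$ are smooth conics and both have exactly two components.

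Next I would translate $X_i\cap X_l$ into gluing data along the conductor. Since the conductor square is cocartesian, $X$ is the pushout of $C\leftarrow\BY\to Y$, and by \myref{rmk:remark-on-reducible} the maps $\nu\restrict{Y_i}\colon Y_i\to X_i$ and $\nu\restrict{\BY_i}\colon\BY_i\to C\cap X_i$ are isomorphisms. Consequently $X_i$ and $X_l$ meet only along the non-normal locus $C$, and $X_i\cap X_l$ is exactly the subscheme of $C$ along which the images of $\BY_i$ and $\BY_l$ overlap; equivalently, in each model of \myref{exa:Reid-reducible} it is cut out inside the ambient projective space by the linear conditions used to perform the gluing (the plane $\Pi$ in (1), the coordinate lines $L_j$ in (2), and the planes $\Pi_j$ through $\Gamma$ in (3)).

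It then remains to carry out the case-by-case computation. For smooth conics ($r=2$): by construction both $\BY_i$ are glued isomorphically onto the common smooth conic $C$ in $\Pi$, so $X_1\cap X_2=C$. For line pairs: with the cyclic labelling of \myref{exa:Reid-reducible}~(2) one has $\Gamma_i\mapsto L_{i-1}$, $\Gamma'_i\mapsto L_i$ and $C=\bigcup_j L_j$, so $X_i\cap X_{i+1}$ is cut out by $(L_{i-1}\cup L_i)\cap(L_i\cup L_{i+1})$; when $r=2$ the convention $L_0=L_r=L_2$ makes this the whole line pair $L_1\cup L_2$, while for $r\ge 3$ the coordinate lines $L_{i-1},L_i,L_{i+1}$ through $O$ are distinct, so the intersection is the single line $L_i$, and for a non-adjacent pair it collapses to the point $O$. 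For double lines: all $(\BY_i)\redn$ are glued onto the line $\Gamma$ and $X_i$ meets the span of the remaining components only in $\Pi_i$, so $X_i\cap X_l\subseteq\Pi_i\cap\Pi_l$; for $r\ge 3$ the planes $\Pi_j$ are forced to be distinct by the spanning hypothesis and two distinct planes through $\Gamma$ meet (as schemes) exactly in the reduced line $\Gamma$, so $X_i\cap X_l=\Gamma$, whereas for $r=2$ one passes to the degree-two model $(w(w-q)=0)\subset\PP(1,1,1,2)$ of \myref{exa:Reid-reducible}~(4) with $q$ a double line and reads off $X_1\cap X_2=(w=0,\,q=0)$, a double line.

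The main obstacle I anticipate is controlling the scheme structure rather than merely the support: one must distinguish a line from a line pair and from a double line, and rule out spurious embedded components (for instance at $O$ in the line-pair case, or along $\Gamma$ in the double-line case for $r\ge 3$). I would handle this by working directly with the homogeneous ideals in the models of \myref{exa:Reid-reducible} — using that intersections of linear subspaces are again reduced linear subspaces — and, where convenient, by using that $X$ is \SerreS{2} (being Gorenstein), so that $C$ is purely one-dimensional and $X_i\cap X_l$ acquires no embedded components off the locus predicted by the models.
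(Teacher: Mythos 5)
Your proposal is correct and takes essentially the same approach as the paper, whose entire proof is the one-line "this follows by inspecting each case of \myref{exa:Reid-reducible}"; you simply carry out that inspection, using the pushout/conductor description to reduce $X_i\cap X_l$ to the explicit gluing data of the models. The only small caveat is in the double-line case with $r=2$: besides model (4) (degree $2$), this also arises from model (3) with $d\ge 3$, where your "two distinct planes through $\Gamma$" argument does not apply since $\Pi_1=\Pi_2=\PP^2$; but there $C$ is by construction the first-order neighborhood of $\Gamma$ defined by $\sI_\Gamma^2$, again a double line, so the conclusion is unaffected.
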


\begin{proof}
  This follows by inspecting each case of \myref{exa:Reid-reducible}.
\end{proof}

\begin{theorem}\label{thm:reducible-main}
  We work under \myref{set:main-theorem}.
  Assume $X \coloneqq M_k$ is reducible and reduced.
  Then there are precisely eight possibilities for $X$ as given in \myref{tab:reducible}.
  In particular, we have $d = 8$ or $9$.
  \begin{table}[h]
    \centering
    \caption{Reducible fibers of regular del Pezzo surfaces.}
    \label{tab:reducible}
    \begin{tblr}{clll}
      $r$ & $(Y_1, \sO_{Y_1}(\BY_1), \text{$\BY_1$})$ & $(Y_2, \sO_{Y_2}(\BY_2), \text{$\BY_2$})$ & $(Y_3, \sO_{Y_3}(\BY_3), \text{$\BY_3$})$ \\ \hline\hline
      $3$ & $(\rationalruled{1}, \sO(F+E), \text{line pair})$ & the same as $Y_1$ & the same as $Y_1$ \\
      $2$ & $(\rationalruled{2}, \sO(F+E), \text{line pair})$ & the same as $Y_1$ & \\
      $2$ & $(\rationalruled{4}, \sO(E), \text{smooth conic})$ & $(\PP^2, \sO(2), \text{smooth conic})$ \\
      $2$ & $(\rationalruled{1}, \sO(E), \text{smooth conic})$ & $(\PP^2, \sO(1), \text{smooth conic})$ \\
      $2$ & $(\rationalruled{2}, \sO(E), \text{smooth conic})$ & $(\PP(1,1,2), \sO(2), \text{smooth conic})$ \\
      $2$ & $(\rationalruled{2}, \sO(E), \text{smooth conic})$ & $(\rationalruled{0}, \sO(F+E), \text{smooth conic})$ \\
      $2$ & $(\rationalruled{1}, \sO(E), \text{smooth conic})$ & $(\rationalruled{1}, \sO(F+E), \text{smooth conic})$ \\
      $2$ & $(\rationalruled{0}, \sO(E), \text{smooth conic})$ & the same as $Y_1$
    \end{tblr}
  \end{table}
\end{theorem}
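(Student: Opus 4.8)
The plan is to intersect Reid's classification of reducible tame Gorenstein del Pezzo surfaces with the numerical constraints forced by the regularity of the total space $M$. Since $X \coloneqq M_k$ is reduced and $\Hh^1(X, \sO_X) = 0$, it is tame, so by \myref{thm:reducible-dP-classification} and \myref{exa:Reid-reducible} it lies in one of four families: (I) two components glued along smooth conics; (II) $r$ components glued cyclically along line pairs; (III) $r$ components glued along double lines; and (IV) the degree-$2$ surface $(w(w - q) = 0) \subset \PP(1,1,1,2)$. First I would record what regularity of $M$ buys us. Each component $X_i$ is a Cartier prime divisor on $M$, hence Gorenstein; in particular a component isomorphic to $\PP(1,1,d)$ has $d \le 2$. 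By \myref{rmk:remark-on-reducible} the normalization restricts to an isomorphism $Y_i \isom X_i$ carrying $\BY_i$ to the conductor on $X_i$, and $\sum_{j \ne i}(X_i \cap X_j) = \BY_i$ as divisors on $Y_i$. Since $M$ is locally factorial, if two distinct prime divisors on $M$ meet, their intersection is purely of codimension one in each. Finally $X = \divisor_M p \equivlinearly 0$ gives $X_i\restrict{X_i} \equivlinearly -\BY_i$. The key observation (a variant of \myref{rmk:triple-points-formula}) is that, for a curve $\Gamma$ in the double locus with $\Gamma \subset X_i$, the number $X_i \ndot \Gamma$ computed on $M$ is intrinsic and can be evaluated in two ways: it equals $((X_i\restrict{X_i}) \ndot \Gamma)_{Y_i} = -(\BY_i \ndot \Gamma)_{Y_i}$, and, for any $l \ne i$ with $\Gamma \subset X_l$, also $((X_i \cap X_l) \ndot \Gamma)_{Y_l}$.

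For family (I), taking $\Gamma = C \coloneqq X_1 \cap X_2 \isom \PP^1$ and comparing the two evaluations of $X_1 \ndot C$ (using $C = \BY_1$ on $Y_1$ and $C = \BY_2$ on $Y_2$) yields $(\BY_1)_{Y_1}\powndot{2} + (\BY_2)_{Y_2}\powndot{2} = 0$. Running through the admissible triples $(Y_i, \sO_{Y_i}(1), \BY_i)$ of \myref{thm:normalization-dP-classification} (discarding $\PP(1,1,d)$ with $d \ge 3$), one finds that the self-intersection of a smooth-conic $\BY_i$ takes the values $4, 1, 2, 0, -1, -2, -3, -4$, and that the displayed equation has exactly six solutions: these are rows $3$–$8$ of \myref{tab:reducible}. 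Family (IV) is exactly the ``both components from case (a)'' configuration excluded from families (I)–(III); for a smooth plane conic $q$ it falls under (I) with $(\BY_1)_{Y_1}\powndot{2} + (\BY_2)_{Y_2}\powndot{2} = 4 + 4 \ne 0$, for a line pair under (II), and for a double line under (III), so it is ruled out in each case.

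For family (II), if $r \ge 4$ then some pair of components is non-adjacent and hence meets only in a point (\myref{lem:intersection-description}), contradicting the codimension-one statement; hence $r \le 3$. Applying the key observation along each double line $L$ gives $(L)_{Y_i}\powndot{2} + (L)_{Y_{i+1}}\powndot{2} = -1$ when $r = 3$ and $= -2$ when $r = 2$; since a component of $\BY_i$ has self-intersection $1$ on $\PP^2$, $1/2$ on $\PP(1,1,2)$, and one of $0, -(d_i - 2)$ on $\rationalruled{d_i - 2}$, a short analysis of these relations shows that $\PP^2$- and $\PP(1,1,2)$-components are impossible and that every component must be $\rationalruled{1}$ (so $r = 3$, row $1$) or $\rationalruled{2}$ (so $r = 2$, row $2$). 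For family (III), let $\Gamma$ be the reduced double curve; every component contains $\Gamma$ and $\BY_i = 2\Gamma$, so $X_1\restrict{X_1} \equivlinearly -2\Gamma$ gives $X_1 \ndot \Gamma = -2(\Gamma)_{Y_1}\powndot{2} < 0$, whereas for $l \ne 1$ we get $X_1 \ndot \Gamma = ((X_1 \cap X_l) \ndot \Gamma)_{Y_l} \ge (\Gamma)_{Y_l}\powndot{2} > 0$ (as $X_1 \cap X_l$ is a positive multiple of $\Gamma$ and $(\Gamma)_Y\powndot{2} \in \{1, 1/2\}$ since $Y$ is $\PP^2$ or $\PP(1,1,2)$). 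This contradiction shows family (III) is empty.

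Collecting the survivors gives exactly the eight rows of \myref{tab:reducible}, and then $d = \sum_i d_i \in \{8, 9\}$ by inspecting the table. The part I expect to require the most care is family (II): one must verify that the self-intersection relations admit a solution that is consistent all the way around the cycle and that rows $1$ and $2$ are the only possibilities, and one must handle with care the single $A_1$ quotient singularity of a $\PP(1,1,2)$-component — where $\BY_i$ is a Weil but not Cartier divisor — and the non-reduced conductor of family (III) when interpreting the intersection numbers above.
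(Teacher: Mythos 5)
Your proposal is correct and follows essentially the same route as the paper: reduce to Reid's classification of reducible tame Gorenstein del Pezzo surfaces, then exploit $X = \divisor_M p \equivlinearly 0$ (the triple point formula) to derive the self-intersection relations along the double curves and enumerate the solutions, ruling out the double-line family by a sign contradiction. Your explicit use of the Gorenstein constraint on components and the separate treatment of the degree-$2$ example are minor presentational differences, and the subtleties you flag (the $\PP(1,1,2)$ cross-terms and the cyclic consistency check for line pairs) are exactly the points the paper's case analysis resolves.
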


\begin{proof}
  We use \myref{rmk:triple-points-formula} and \myref{thm:reducible-dP-classification}.
  Note that by \myref{rmk:remark-on-reducible}, each $Y_i$ is isomorphic to $X_i$ and thus we identify them.

  By \myref{thm:reducible-dP-classification}, all the $\BY_i$ are isomorphic. There are three cases: each $\BY_i$ is (1) a smooth conic, (2) a line pair, or (3) a double line.

  (1) Suppose that each $\BY_i$ is a smooth conic. Then $r = 2$ by \myref{thm:reducible-dP-classification} and $X_1 \cap X_2 = \BY$ by \myref{lem:intersection-description}.
  Setting $\Gamma = \BY$ in \myref{rmk:triple-points-formula}, we obtain
  \begin{equation*}
    0 = (X_1 \cap X_2 \ndot \BY)_{X_2} + (X_2 \cap X_1 \ndot \BY)_{X_1}
    = (\BY)_{X_1}\powndot{2} + (\BY)_{X_2}\powndot{2}\text{.}
  \end{equation*}
  Swapping $X_1$ and $X_2$ if necessary, we can assume that $(\BY)_{X_1}\powndot{2} \le 0$. Then by \myref{thm:normalization-dP-classification}, $Y_1 \isom \rationalruled{d_1 - 4}$ and $\sO_{Y_1}(\BY) \isom \sO_{\rationalruled{d_1 - 4}}(E)$ for some $d_1 \ge 4$.
  We classify $Y_2$ based on \myref{tab:normalization}.
  Assume that $Y_2$ is of case (d). In this case, $d_2 = 2$ or $3$, and we have $0 = (\BY)_{X_1}\powndot{2} + (\BY)_{X_2}\powndot{2} = -(d_1 - 4) + (4 - d_2)$. Hence, $(d_1, d_2) = (6, 2)$ or $(5, 3)$, and $(Y_1, Y_2) \isom (\rationalruled{2}, \rationalruled{0})$ or $(\rationalruled{1}, \rationalruled{1})$.
  The proofs for the other cases of $Y_2$ are similar.

  (2) Suppose that each $\BY_i$ is a line pair. Since every $X_i \cap X_l$ for $i \neq l$ is an effective Cartier divisor on $X_i$, we see $r \le 3$ from \myref{lem:intersection-description}.
  First, assume $r = 2$. Write $X_1 \cap X_2 = \Gamma_1 \cup \Gamma_2$.
  Setting $\Gamma = \Gamma_1$ in \myref{rmk:triple-points-formula}, we obtain
  \begin{align*}
    0 &= (\Gamma_1 + \Gamma_2 \ndot \Gamma_1)_{X_2} + (\Gamma_1 + \Gamma_2 \ndot \Gamma_1)_{X_1} \\
    &= (\Gamma_1)_{X_1}\powndot{2} + (\Gamma_1)_{X_2}\powndot{2} + (\Gamma_1 \ndot \Gamma_2)_{X_1} + (\Gamma_1 \ndot \Gamma_2)_{X_2} \\
    &> (\Gamma_1)_{X_1}\powndot{2} + (\Gamma_1)_{X_2}\powndot{2}\text{.}
  \end{align*}
  Thus we may assume that $(\Gamma_1)_{X_1}\powndot{2} < 0$. From \myref{thm:normalization-dP-classification}, we see $Y_1 \isom \rationalruled{d_1 - 2}$ and $\Gamma_1 = E$ for some $d_1 \ge 2$. Hence, $\Gamma_2$ is a fiber of $\rationalruled{d_1 - 2}$ and $(\Gamma_2)_{X_1}\powndot{2} = 0$. Setting $\Gamma = \Gamma_2$ in \myref{rmk:triple-points-formula}, we obtain
  \begin{equation*}
    0 = 1 + (\Gamma_2)_{X_2}\powndot{2} + (\Gamma_1 \ndot \Gamma_2)_{X_2}
    > (\Gamma_2)_{X_2}\powndot{2}\text{.}
  \end{equation*}
  Thus $Y_2 \isom \rationalruled{d_2 - 2}$ and $\Gamma_2 = E$ for some $d_2 \ge 2$.
  Since $(\Gamma_1 \ndot \Gamma_2)_{X_1} = (\Gamma_1 \ndot \Gamma_2)_{X_2} = 1$, we deduce that $d_1 = d_2 = 4$ and $Y_1 \isom Y_2 \isom \rationalruled{2}$.
  Then, assume $r = 3$. Set $\Gamma_{il}$ to be the line $X_i \cap X_l$ for $i \neq l$. By \myref{rmk:triple-points-formula}, we see that
  \begin{equation*}
    0 = (X_1 \ndot \Gamma_{12}) + (X_2 \ndot \Gamma_{12}) + (X_3 \ndot \Gamma_{12})
    > (\Gamma_{12})_{X_1}\powndot{2} + (\Gamma_{12})_{X_2}\powndot{2}
  \end{equation*}
  Hence, we can assume that $Y_2 \isom \rationalruled{d_2 - 2}$ and $\Gamma_{12} = E$ for some $d_2$. Then $(\Gamma_{23})_{X_2}\powndot{2} = 0$ and $(\Gamma_{12} \ndot \Gamma_{23})_{X_2} = 1$. \myref{rmk:triple-points-formula} implies
  \begin{equation*}
    0 = (\Gamma_{23})_{X_3}\powndot{2} + 1\text{.}
  \end{equation*}
  It follows that $Y_3 \isom \rationalruled{1}$ and $\Gamma_{23} = E$.
  Repeating this argument, we deduce that $Y_1 \isom \rationalruled{1}$ and $Y_2 \isom \rationalruled{1}$.

  (3) Finally, suppose that each $\BY_i$ is a double line.
  Write $\Gamma$ for the reduced line $C\redn$ on $X$.
  From \myref{thm:normalization-dP-classification}, we see that $(\Gamma)_{Y_i}\powndot{2} > 0$ for every $i$.
  If $r = 2$, then by \myref{lem:intersection-description} and \myref{rmk:triple-points-formula}, we have
  \begin{equation*}
    0 = (\BY_2 \ndot \Gamma)_{X_2} + (\BY_1 \ndot \Gamma)_{X_1}
    = 2(\Gamma)_{X_1}\powndot{2} + 2(\Gamma)_{X_2}\powndot{2}\text{,}
  \end{equation*}
  which is a contradiction.
  If $r \ge 3$, then by a similar reasoning, we have
  \begin{equation*}
    0 = \sum_{1 \le i \le r} (\Gamma)_{X_{l(i)}}\powndot{2}\text{,}
  \end{equation*}
  where we chose $l(i) \neq i$ for each $i$.
  This is also a contradiction.
\end{proof}

\begin{corollary}\label{cor:gFs-reducible}
  In the setting of \myref{thm:reducible-main}, $X$ is globally $F$-split.
\end{corollary}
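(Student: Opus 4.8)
The plan is to deduce this directly from the classification in \myref{thm:reducible-main} together with \myref{prop:gFs-of-nonnormal}. First I would record that $X = M_k$ is a tame nonnormal Gorenstein del Pezzo surface: it is a Gorenstein del Pezzo surface because $M$ is regular with $\omega_M^\inv$ ample and $X$ is an effective Cartier divisor in $M$ (so the adjunction formula gives $-\Kk_X \equivlinearly (-\Kk_M)\restrict{X}$ ample); it is tame by the Lemma following \myref{set:main-theorem}, using that $X$ is reduced; and it is nonnormal because it is connected (indeed $\Hh^0(X, \sO_X) = k$, as noted in the proof of that Lemma) and reducible, whereas a connected normal scheme is irreducible.

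Having placed $X$ in the scope of \myref{prop:gFs-of-nonnormal}, the claim reduces to checking that neither of the two exceptional cases \myref{item:double-line-case} and \myref{item:inseparable-case} occurs. Case \myref{item:inseparable-case} requires $X$ to be irreducible, so it is excluded outright by our hypothesis. For case \myref{item:double-line-case}, I would simply inspect \myref{tab:reducible}: in each of the eight possibilities every component $\BY_i$ is a line pair or a smooth conic, and never a double line, so not all connected components of $\BY$ are double lines. Hence $X$ is globally $F$-split by \myref{prop:gFs-of-nonnormal}. The argument is essentially bookkeeping against Reid's table, and I do not anticipate any genuine obstacle; the only point demanding mild care is the identification of the conductor divisor $\BY$ of $X$ with the components $\BY_i$ listed in \myref{tab:reducible}, which is immediate from the last entry of each row there (and from \myref{rmk:remark-on-reducible}).
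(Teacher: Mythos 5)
Your proposal is correct and follows exactly the paper's route: the paper's proof is the one-line deduction from \myref{thm:reducible-main} and \myref{prop:gFs-of-nonnormal}, and your elaboration (tameness via the lemma after \myref{set:main-theorem}, nonnormality from connectedness plus reducibility, and the inspection of \myref{tab:reducible} to rule out the two exceptional cases) is precisely the bookkeeping the paper leaves implicit.
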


\begin{proof}
  This is a consequence of \myref{thm:reducible-main} and \myref{prop:gFs-of-nonnormal}.
\end{proof}

\subsection{Proofs of \myref{main:globally-plus-regular} and \myref{main:vanishing}}
\begin{proof}[\proofname{} of \myref{main:globally-plus-regular}]
  By \myref{thm:qFs-implies-gpr}, it suffices to show that $M$ is quasi-$F$-split.
  If $X$ is an RDP del Pezzo surface, $M$ is quasi-$F$-split by \cite[Theorem~B]{OnukiTakamatsuYoshikawa25v2}.
  When $X$ is not an RDP del Pezzo surface, we show that $X$ is quasi-$F$-split: this implies that $M$ is quasi-$F$-split by \myref{lem:mod-p-qFs}.
  If $X$ is normal but has non-RDP singularities, the assertion follows from \myref{lem:qFs-elliptic-cone}.
  If $X$ is nonnormal, then the assertion follows from \myref{cor:gFs-irreducible} and \myref{cor:gFs-reducible}.
\end{proof}

\bibliographystyle{amsalpha}
\bibliography{mainref}

\end{document}